\tikzset{>={Stealth[scale=1.5, bend]}}
\theoremstyle{plain}
\newtheorem{theorem}{Theorem}
\newtheorem{lemma}{Lemma}
\theoremstyle{definition}
\newtheorem{definition}{Definition}
\newtheorem{remark}{Remark}
\newcommand\com[1]{}
\newcommand\C{{\mathbb C}}
\newcommand\D{\mathcal{D}}
\newcommand\op[1]{\mathop{\rm #1}\nolimits}
\newcommand\p{\partial}
\newcommand\R{{\mathbb R}}
    \def\@@and{}
\title[Global invariant Euler--Lagrange equations]{On globally invariant Euler--Lagrange equations for curves}
\author{Boris Kruglikov}
\address{Department of Mathematics and Statistics, UiT the Arctic University of Norway, Troms\o\ 9037, Norway.
\ E-mail: {\tt boris.kruglikov@uit.no}.}
\author{Eivind Schneider}
\address{Department of Mathematics and Statistics, UiT the Arctic University of Norway, Troms\o\ 9037, Norway.
\ E-mail: {\tt eivind.schneider@uit.no}.}
\author{Wijnand Steneker}
\address{Department of Mathematics and Statistics, UiT the Arctic University of Norway, Troms\o\ 9037, Norway.
\ E-mail: {\tt wijnand.s.steneker@uit.no}.}
\date{}
\begin{document}

 \begin{abstract}
Invariant Lagrangians yield invariant Euler--Lagrange equations, and it was discussed in the 
literature how to compute those using various local methods. The focus of this paper is on global 
algebraic differential invariants. In this case the computation can be modified in several aspects.
We will discuss relations with previous approaches and some foundational aspects.

The theory of invariant Euler--Lagrange equations was applied to curves with respect to
the motion group in the Euclidean plane and space. We expand those computations 
to the next dimension four (Minkowski spacetime), which already exhibits computational challenges. 
We also provide formulas for other examples, namely the projective and conformal (M\"obius) groups 
and relate to some recent applications.
 \end{abstract}

\maketitle

\section{Introduction}

The calculus of variations aims at description of extremals of an action, usually written as
the least action principle (sometimes maximum principle, but these are equivalent):
 $$
\int L\,dt \to \min
 $$
where we omit any mention of the boundary conditions, one of the standard choices
is the case of fixed ends or fast decay at infinity. Here $L=L(t,u,u',\dots,u^{(n)})$ is
a Lagrangian (more precise term: Lagrangian density) depending on $u=u(t)$ and its derivatives up to
order $n$; in what follows we will use the jet notations $u_0=u$, $u_1=u'$, \dots, $u_n=u^{(n)}$.
 
We will only consider the problems with one-dimensional time $t$ (leaving aside multiple integrals)
but several dependent variables $u=(u^i)$, in other words, we will discuss higher order variational
problems for curves on manifolds. 
In this case the Euler--Lagrange equation for extremals is the following system of ODEs:
 \[
\frac{\delta L}{\delta u^i}:=
\frac{\p L}{\p u^i} -\frac{d}{dt}\frac{\p L}{\p u_1^i} + \frac{d^2}{dt^2}\frac{\p L}{\p u_2^i} -  
\cdots + (-1)^n\frac{d^n}{dt^n}\frac{\p L}{\p u_n^i} =0, 
 \]
where $\tfrac{d}{dt}$ is the operator of total derivative. 
These are obtained from the first variation of the action, and we will not discuss the second variation
and the sufficient condition for critical points to become actual extremals.

While given by a simple formula, the computation of the Euler--Lagrange equations can be hard, depending 
on algebraic complexity of $L$, especially when the order $n$ or the dimension $m$ of the manifold grows. 
However, the application of symmetry and invariance often simplifies straightforward calculations.

Invariant variational problems are natural and often arise in practice. This means 
that the integrand $L\,dt$ is an invariant of a Lie group $G$ action; then 
it can be expressed through differential invariants and hence 
the equation for extremals $\delta_uL=0$ can be also represented 
in invariant terms that simplify the computations. 

This program was initiated by Griffiths \cite{griffiths} (in the context of exterior differential systems) 
and by Anderson \cite{anderson} (via the jet formalism). 
In particular, they derived the following formula in the case of optimal control problems 
for curves on the Euclidean plane.
Denote by $\kappa$ the curvature and by $s$ a natural parameter of the curve,
and let $\kappa_m=\tfrac{d^m}{ds^m}\kappa$.
If $L=L(\kappa,\kappa_s,\kappa_{ss},\dots,\kappa_n)$ is an invariant Lagrangian
and we integrate by $ds$, then the Euler--Lagrange equation is
 \[
(\D_s^2+\kappa^2)\,\mathcal{E}(L)+\kappa\,\mathcal{H}(L)=0,
 \]
where the invariant Euler and Hamilton operators are given by
 \begin{equation}\label{EHformula}
\mathcal{E}(L)=\sum_{m=0}^n(-1)^m\D^m\frac{\p L}{\p\kappa_m},\qquad
\mathcal{H}(L)=\sum_{j<i}(-1)^j\kappa_{i-j}\D^j\frac{\p L}{\p\kappa_i}-L.
 \end{equation}
 
In the work \cite{kogan2001invariant} by Kogan--Olver this was advanced further, in particular a general formula was derived and applied, among other examples, to curves in the Euclidean space. 

The main tool of Kogan--Olver \cite{kogan2003invariant} was the moving frame method 
by Cartan \cite{Cartan}, as extended and generalized in the work of 
Fels--Olver \cite{FO}. While quite powerful, it relies on the assumption of free action 
(which eventually happens for finite-dimensional Lie group actions on the space 
of pure jets, but not on differential equations in general \cite{KS1}) 
and the existence of a (transversal) cross-section of the action 
(which, in general, is true only locally).
Thus their results apply to local differential invariants. 
For instance, in the above case of curves in the (oriented) plane with respect to 
the motion group $G=SO(2)\ltimes\R^2$ the curvature $\kappa$ is not invariant
(it contains radicals and is not algebraic), while $\kappa^2$ is 
a rational differential invariant, cf.\ \cite{kruglikov2016global}.

Based on recent advances in the global generation of the algebra of differential invariants \cite{kruglikov2016global},
we address the problem of globally invariant Euler--Lagrange equations. It turns out that most of
the findings in Kogan--Olver \cite{kogan2003invariant} holds true in this context. Generalizing their result,
we will prove an analogous formula for the invariant Euler--Lagrange equations
via the Euler and Hamilton operators. 
In doing so we not only relax the assumption of free action for the Lie group $G$, but also
allow $G$ to be an infinite-dimensional Lie pseudogroup (equivalently this invariance 
can be treated via the corresponding Lie algebra $\mathfrak{g}$ of vector fields) provided
it is of a moderate size (meaning it allows sufficiently many differential invariants;
more precisely that there is a basis of invariant one-forms in the space of jets).

Then this is applied to several classical problems. For instance, we revisit
the problem of motion group on the Euclidean plane. A globally invariant Lagrangian
can be expressed via global differential invariants, which are $K_0=\kappa^2$
and $K_i=\nabla^i K_0$ for $\nabla=\kappa^{-1}\D_s$. 
Then for a contact-invariant Lagrangian form $L(K_0,\dots,K_n)\omega$, 
where $\omega=\kappa ds=\frac{y_2}{(1+y_1^2)^2}dx$, 
the Euler--Lagrange equation is
 \begin{equation}\label{ABformula}
\mathcal{A}^*\mathcal{E}(L) - \mathcal{B}^*\mathcal{H}(L)=0. 
 \end{equation}
Here the invariant Euler and Hamilton operators are given by the obvious analogs of 
formula \eqref{EHformula} with $\kappa_i\mapsto K_i$, while
\[\mathcal{A}^* = 2 K_0^2\nabla^2 +5 K_0 K_1 \nabla+(2K_0 K_2+K_1^2+2K_0^2), \qquad \mathcal{B}^* = K_0 \nabla^2 +\frac{1}{2} K_1 \nabla.\]

Formula \eqref{ABformula} was derived in \cite{kogan2003invariant} in the general local context. 
We  show that it also works for globally invariant problems and general groups $G$.  
Other examples, considered in this paper, where formula \eqref{ABformula} is applicable, are: 
unparametrized curves in conformal spaceforms (in dimension two, rather M\"obius structure) 
and in flat projective spaces with respect to the corresponding maximal symmetry groups.

We should stress that computations produce intermediate formulas that make it 
difficult to handle even for modern symbolic software and computer capacity. 
We will discuss the advantages and
limitations of the method of moving frames, and show some tricks that allow to 
make these hard computations manageable. 
We will derive both local and global formulas for invariant Euler--Lagrange equations.

In addition, we will discuss regular and singular strata of the action, as the latter
are precisely those where the moving frame does not exist.
We provide an example from conformal geometry (so-called conformal circles,
or conformal geodesics) where the Lagrangian is conformally invariant, yet the
Euler--Lagrange equation forms a singular stratum.

An \textit{invariant variational problem} for unparametrized curves 
$\gamma\subset M$ is defined through a (reparametrization-invariant) functional
 \begin{equation}
S[\gamma] = \int_{\gamma}L\,\omega, 
 \end{equation}
where the \textit{invariant Lagrangian} $L$ is a differential invariant 
and $\omega$ is a contact-invariant horizontal one-form. The corresponding Euler--Lagrange equations 
are invariant and thus, as often stated, ``can be written in terms of differential invariants''. 
This customary refers to ``absolute differential invariants", however from the general principles
one can just conclude that it can be expressed through absolute or relative differential invariants.
The latter occur if the locus of the Euler--Lagrange equations in the space of jets is contained
in the singular stratum of the foliation of this space by $G$-orbits. 

For this purpose a relative invariant factor $W$ should be introduced in expression \eqref{ABformula},
as also explained in \cite{kogan2001invariant}. It was, however, questioned in that paper
whether the Euler--Lagrange equation could happen to land in the singular locus of $W$; in other words,
whether the omission of $W$ as in \eqref{ABformula} is justified.
As mentioned above, the equation of conformal circles provides such an example.
Thus the factor $W$ (which in our approach will be a global relative invariant) is inevitable
in the invariant formulation of the Euler--Lagrange equations.

\subsection*{Structure of the Article}

In the next section, we develop the machinery for invariant variational calculus,
adapted for global differential invariants. We do not rely solely on the technique 
of moving frames; yet, we obtain effective formulas for computations. The method is 
similar to that of \cite{kogan2003invariant} and we relate our approach 
(which does not use invariantizations, central in loc.cit.) to that of Kogan--Olver. 
It was already mentioned in this reference, and  made explicit 
in the following work by Itskov \cite{It}, that computations can be made
explicit for the Lie algebra $\mathfrak{g}$ action via the so-called recurrence 
formulas. 


The matrix relative invariant $W$, an important ingredient hidden in formula 
\eqref{ABformula}, cannot be derived from the cross-section by Lie algebraic
methods. We give a constructive approach to computing it using the symbols of  differential $G$-invariants.
Our main result, Theorem \ref{mainThm}, is the formulation of
the invariant Euler--Lagrange equations via an arbitrary 
invariant coframe, suitable for both global and local approaches.

Then we provide several important examples of computations. References 
\cite{anderson,griffiths,kogan2001invariant,kogan2003invariant} already contained
non-trivial examples. Here we complement those by first deriving formulas
for global invariant problems, that is, by expressing the Euler--Lagrange equations
via global rational differential invariants. In some other examples, not discussed earlier
in the literature, we also provide formulas via local differential invariants, as those may also be of interest.

Thus, we cover 2-dimensional projective and M\"obius geometries, 3-dimensional metric
and conformal geometries, and also 4-dimensional Euclidean (as well as Minkowski) geometries.
We give an explicit general formula for the Euler--Lagrange operator, but also provide
a sample of results for particular Lagrangians depending on jets of curves of low order.

We conclude with an outlook, where we discuss symbolic challenges we faced in the
computations of examples.

\section{Invariant Variational Problems}

\subsection{Jets of curves} \label{sect:jets} 

We shall study jets of unparametrized curves. Given a $n$-dimensional manifold $M^n$ and a 
curve $\gamma: \mathbb{R} \rightarrow M^n$, we may choose coordinates 
$(x,u^1, \dots, u^{n-1})$ on $M$ such that $\gamma = u(x)$ is a graph of 
the function $x \mapsto (u^i(x))$. In this way, we obtain local coordinates for 
the zeroth order jet bundle $J^0(M,1) = M$ of unparametrized curves in 
$M^n$. The $k$-th order jet bundles $J^k(M, 1)$ have coordinates $(x, 
u^i_{j})$ with $0 \leq j \leq k$, where $u^i_{j}$ represents the $j$'th 
derivative of $u^i$, that is, given a curve $\sigma: x \mapsto u(x)$ we have that 
$u^i_j|_{\sigma} = \frac{d^ju^i}{dx^j}(x)$. A curve $\sigma: x \mapsto u(x)$ 
defines its $k$'th jet prolongation $j^k\sigma,\ x \mapsto (x, u^i_j(x))_{0 
\leq j \leq k}$, which is obtained by including all derivatives up to order 
$k$.

\medskip

We have natural projections between jet bundles $\pi_{k}^{k+1}: J^{k+1} \rightarrow J^k$, 
through which we may define the infinite jet bundle $J^{\infty}$ as its inverse limit. 
Accordingly, differential forms on $J^{\infty}$ arise as the pullback of differential 
forms on some finite order jet 
space $J^k$ through the projection $\pi_{k}^{\infty}: J^{\infty} \rightarrow J^k$. We denote by $\Omega^p := \Omega^p(J^\infty(M,1))$ the space of differential $p$-forms on $J^\infty(M,1)$.
\medskip

The contact 1-forms on $J^{\infty}$ are spanned by the following basis   
\begin{equation}\label{def_contact_forms}
    \theta^i_j := du_{j}^i - u_{j+1}^i dx\hspace{1cm} (1\leq i<n,\ j\geq 0).
\end{equation}
Contact forms $\theta$ are uniquely characterized by the property that they 
vanish on the jet prolongation of curves; that is, we have $\theta|_{j^k\sigma}=0$. 
We will refer to $j$ as the \textit{order} of the 1-form $\theta^i_j$ (though it
is supported on $J^{j+1}$). Then we define the order of a general contact 1-form 
$\sum_{j=0}^k\sum_{i=1}^{n-1}a_i^j\theta_j^i$ with $a_i^j\in C^\infty(J^\infty)$
to be $k$ if $a_i^k\neq0$ for some $i$. 
Denote by $\mathcal{C}\subseteq T^{*}J^{\infty}$ the span of all contact forms. 
The dual of $\mathcal{C}$ in $TJ^{\infty}$ is spanned by the \textit{total derivative vector field}
 \begin{equation}\label{def_total_derivative}
D_x = \frac{\p}{\p x} +\sum_{i,j \geq 0} u^i_{j+1}\frac{\p}{\p u^i_j}.
 \end{equation}

\subsection{Invariant differential forms} \label{sect:invariants}

Consider a Lie group $G$ acting on $M=J^0$, and let $\mathfrak{g}=\op{Lie}(G)$ 
be the corresponding Lie algebra of vector fields on $J^0$. The $G$-action on 
$M$ induces an action of $G$ on the space of curves in $M$, and thus on the space 
$J^k$ of $k$-jets of curves. One way to define this canonical prolongation 
$\varphi^{(k)}$ of a diffeomorphism $\varphi \in G$ is by requiring 
$(\varphi^{(k)})^*(\theta_j^i) \in \mathcal{C}$ for $j < k$. 
Similarly, we define the prolongation of a vector field $X$ on $J^0$ by 
the condition $L_{X^{(k)}} \theta_j^i \in \mathcal{C}$ for $j<k$. 

A more general setup is a Lie pseudogroup $G$ of local transformations of $M$
and the Lie algebra of (sheaves of) vector fields $\mathfrak{g}$ that need not
be complete. The theory works in this case as well \cite{kruglikov2016global}.

Since $G$ acts on $J^\infty$, we introduce several types of $G$-invariant objects. A $p$-form $\omega \in \Omega^p$ is $G$-invariant if $(\varphi^{(\infty)})^* (\omega) = \omega$,  and  $\mathfrak{g}$-invariant if $L_{X^{(\infty)}} \omega=0$. (In practice, it is sufficient to use finite indices $k$ in these computations, as $\omega$ is the $\pi_{k}^{\infty}$-pullback of a $p$-form on $J^k$ for some $k$.) If $\omega$ is $G$-invariant, it is also $\mathfrak{g}$-invariant. As a partial converse, if $\omega$ is $\mathfrak{g}$-invariant and globally defined, then it is also $G_0$-invariant, where $G_0\subset G$ is the connected component. 

\begin{definition}
    An invariant function (0-form) is called an absolute differential invariant.  
\end{definition}

It is well-known that the algebra of absolute differential invariants is finitely generated, and that it separates generic orbits. The idea goes back to Sophus Lie, but we will rely on a global version from \cite{kruglikov2016global}. For unconstrained curves, which are our focus, it can be formulated in the following way. 

 \begin{theorem}[The global Lie--Tresse theorem \cite{kruglikov2016global}]
Let $G$ be an algebraic Lie pseudogroup of diffeomorphisms that acts transitively 
on $M$. The field of rational absolute differential invariants separates orbits in 
general position. Furthermore, this field is finitely generated in the sense that 
there exists finitely many rational differential invariants $I^1,\dots,I^r$, 
and a rational invariant derivation $\D$, such that any other rational absolute 
differential invariant is a rational combination of $I^i$ and their derivatives, 
$\D^j(I^i)$. 
 \end{theorem}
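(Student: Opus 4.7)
The plan is to prolong the pseudogroup action to each finite-order jet bundle $J^k(M,1)$, where algebraicity of $G$ guarantees that the prolonged infinitesimal generators have coefficients rational in jet coordinates. At each fixed $k$, I would apply a Rosenlicht-type separation theorem for rational invariants of algebraic group actions, suitably adapted to algebraic pseudogroups acting on jet bundles, to obtain a field $\mathcal{F}_k$ of rational $G$-invariants whose transcendence degree equals $\dim J^k$ minus the dimension of the generic orbit, and which separates orbits in general position on $J^k$. Taking the union $\mathcal{F}_\infty = \bigcup_k \mathcal{F}_k$ yields the separation statement.

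Next, a dimension count combined with transitivity of $G$ on $M$ shows that for $k$ past some threshold $k_0$, the generic orbit in $J^k$ is exactly one dimension larger than its projection to $J^{k-1}$. Consequently $\operatorname{tr.deg}(\mathcal{F}_k) = \operatorname{tr.deg}(\mathcal{F}_{k-1}) + 1$, so every prolongation step contributes precisely one new transcendence dimension to the field of rational invariants. This already heuristically produces the right count for finite generation via a single derivation.

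To assemble these increments into a finitely generated structure, I would construct a rational $\mathfrak{g}$-invariant horizontal derivation $\D = f \, D_x$ on $J^\infty$, equivalently a rational $\mathfrak{g}$-invariant horizontal $1$-form $\omega = f^{-1}\,dx$. The coefficient $f$ is determined, up to rational invariant factors, by a finite linear invariance system whose solvability follows from a symbol analysis of the prolonged action at order $k_1$. With $\D$ in hand, I would fix rational generators $I^1,\dots,I^r$ of $\mathcal{F}_{k_0}$ and argue by induction on $k \geq k_0$: since $\D$ raises jet order by one and the extension $\mathcal{F}_k \supset \mathcal{F}_{k-1}$ is of transcendence degree one, some iterated derivative $\D^j(I^i)$ must supply a generator for that extension in general position. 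Standard field-theoretic closure then implies that every element of $\mathcal{F}_\infty$ is a rational combination of the $\D^j(I^i)$.

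The main obstacle is the construction of the rational invariant derivation $\D$ globally, rather than merely on a local chart where a moving frame happens to exist, together with the verification that its iterates fill each new transcendence dimension without collapsing into rational dependence on previously obtained derivatives. This is the algebro-geometric core of \cite{kruglikov2016global}: it demands a careful blending of Rosenlicht separation with a stabilization analysis of the symbols of the prolonged pseudogroup, and it is precisely the step where classical moving-frame arguments, which rely on implicit local cross-section choices, fall short.
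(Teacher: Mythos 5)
First, note that the paper does not prove this statement: it is quoted from \cite{kruglikov2016global} and used as an imported tool, so the only in-paper material to compare against is the remark following the theorem, which records how Rosenlicht's theorem is applied to the algebraic stabilizer $G_a^k$ acting on the fiber $(\pi_0^k)^{-1}(a)$ and propagated by transitivity. Your overall strategy --- Rosenlicht separation order by order, construction of a rational invariant derivation, and a transcendence-degree count to force finite generation --- is indeed the skeleton of the argument in \cite{kruglikov2016global}, and your reduction to the fibers of $\pi_0^k$ matches that remark.

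However, two steps are genuinely wrong or missing. First, the transcendence-degree increment is miscounted: for curves in $M^n$ one has $\dim J^k(M,1)-\dim J^{k-1}(M,1)=n-1$, while the generic orbit dimension stabilizes at $\dim G$ for finite-dimensional $G$ (a generic orbit in $J^k$ projects onto its image in $J^{k-1}$ with discrete fibers, so it is \emph{not} one dimension larger). Hence past the stabilization order $\operatorname{tr.deg}(\mathcal{F}_k)=\operatorname{tr.deg}(\mathcal{F}_{k-1})+(n-1)$, not $+1$; your count is correct only for plane curves (compare the $\mathrm{SE}(3)$ example in the paper, where each new order contributes the two independent invariants $D_s^j\kappa$ and $D_s^j\tau$). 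The induction must therefore show that the derivatives $\D(I^1),\dots,\D(I^r)$ \emph{jointly} fill each new $(n-1)$-dimensional extension, which requires a nondegeneracy statement about the symbols of the $I^i$ (essentially the content of the paper's first Lemma on the matrix $A^i_j$), not merely that ``some iterated derivative supplies a generator.'' Second, taking the union $\bigcup_k\mathcal{F}_k$ does not yield the global statement: Rosenlicht gives separation off a Zariski-closed singular set $S_k\subset J^k$ which a priori could keep growing with $k$, and the heart of the \emph{global} Lie--Tresse theorem is the stabilization of these singular loci at some finite order (a Noetherianity/Hilbert-polynomial argument on the prolonged symbols, which is also what handles genuinely infinite-dimensional pseudogroups, where orbit dimensions do not stabilize at a fixed $\dim G$). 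Your sketch acknowledges a ``stabilization analysis'' only in passing for the construction of $\D$ --- which is actually the easy part, since a Tresse derivative $\D=(D_xI_0)^{-1}D_x$ works once a single invariant $I_0$ is found --- but the stabilization of the singular strata is the step that cannot be extracted from an order-by-order application of Rosenlicht alone.
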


An invariant derivation $\D$ has the form $\D = \gamma D_x$, where $\gamma$ is a rational function on $J^k$ for some $k$. If $\alpha$ is a rational invariant 1-form satisfying $\alpha(D_x)\neq 0$, the choice $\gamma = \alpha(D_x)^{-1}$ results in an invariant derivation. If $I$ is a rational absolute differential invariant, the 1-form $\alpha = dI$ leads to a particular invariant derivation, called the Tresse derivative.  We let $\mathfrak{A}$ denote the field of rational differential invariants. 

The algebraicity is a somewhat technical property that usually holds for the transitive actions arising in applications. In particular it holds for all the examples we treat in this paper. 

\begin{remark}
    The adjective \textit{algebraic} refers to the action of the stabilizer $G_a = \{g \in G \mid g(a)=a\}$ for $a \in M$. Due to the transitivity assumption, this property is independent of the point $a$. Assuming transitivity and algebraicity in this way leads to an algebraic group $G_a^k$ acting algebraically on fibers $(\pi_0^k)^{-1}(a)$ of $\pi_0^k\colon J^k \to M$, which are algebraic varieties. Then Rosenlicht's theorem says that the (finitely generated) field of rational invariants separates $G_a^k$-orbits in general position in $(\pi_0^k)^{-1}(a)$. Transitivity on $M$ lets us identify these rational $G_a^k$-invariant functions on $(\pi_0^k)^{-1}(a)$ with absolute differential invariants on $J^k$, and we therefore also refer to the latter as \textit{rational}, even though the dependence on $a$ may be nonalgebraic. Indeed, we make no algebraicity assumptions on the manifold $M$. For further details, we refer to \cite{kruglikov2016global}. 
\end{remark}

Assume now that $G$ is finite-dimensional, and let $I^1,\dots,I^r,\D$ be generators 
for the field of absolute differential invariants, of orders $k_i=\op{ord}(I_i)$. 
Locally one can take $r=n-1$, but globally one may need one more invariant to cover a possible
algebraic extension (one is enough due to the theorem on primitive element).
Thus $n-1\leq r\leq n$ and we may assume that $I^1,\dots,I^{n-1},\D$ generate
the subfield of finite index in the field of all rational differential invariants.

Let $\varpi$ be an invariant one-form satisfying $\varpi(\D)=1$. 
It always exists: for example, choose an invariant $I_0$ with $\D(I_0)\neq0$, then take 
 \begin{equation}\label{eq:varpi}
\varpi=\frac{dI_0}{\D(I_0)}.     
 \end{equation} 
(In particular, if $\D$ is given by the 
Tresse derivative $\D=\D_{I^0}$ then $\varpi=dI^0$.) We assume, without loss of 
generality, that $k_0=\op{ord}(\varpi)\leq\min_{1\leq i<n}[k_i]$. Define the 1-forms
 \begin{equation}\label{eq:varthetatilde}
\tilde\vartheta^i = dI^i - \D(I^i)\varpi, \qquad 1\leq i<n.
 \end{equation} 

 \begin{lemma}
The 1-forms $\tilde\vartheta^1,\dots,\tilde\vartheta^{n-1}$ are $G$-invariant 
and satisfy $\tilde\vartheta^i(\D)=0$. Furthermore, the form $\varpi$ can be 
chosen in such a way that the matrix 
$A_j^i=\tilde\vartheta^i\Bigl(\p_{u_{k_i}^j}\Bigr)$ is nondegenerate at 
generic points of $J^\infty(M,1)$. 
 \end{lemma}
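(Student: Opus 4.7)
The first claim is a direct verification. Since $I^i$ is an absolute invariant and $\varpi$ is invariant by construction, both $dI^i$ and $\D(I^i)\varpi$ are $G$-invariant, hence so is $\tilde\vartheta^i$. The identity $\tilde\vartheta^i(\D)=0$ is immediate from $dI^i(\D) = \D(I^i)$ and $\varpi(\D) = 1$.

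For the second claim, pair $\tilde\vartheta^i$ with $\p_{u_{k_i}^j}$ to get
\[
A^i_j \;=\; \frac{\p I^i}{\p u_{k_i}^j} \;-\; \D(I^i)\,\varpi\bigl(\p_{u_{k_i}^j}\bigr).
\]
The first term is the principal symbol of $I^i$ at its top order $k_i$; the second is a correction coming from the vertical component of $\varpi$ at level $k_i$. The plan is to choose $\varpi$ so that the correction is either absent or does not destroy the nondegeneracy of the symbol matrix.

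If an invariant $I_0$ of vertical order strictly less than $\min_i k_i$ exists, then $\varpi = dI_0/\D(I_0)$ has no $du_{k_i}^j$-component and the correction vanishes, leaving $A^i_j = \p I^i/\p u_{k_i}^j$. By the global Lie--Tresse theorem the generators $I^1,\dots,I^{n-1}$ may be taken functionally independent in the vertical directions at their respective top orders, so this symbol matrix is invertible at generic points of $J^\infty(M,1)$.

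The main subtlety arises when no invariant of order below $\min_i k_i$ is available, which is typical since this minimal order equals $\min_i k_i$. Here we exploit that the space of admissible $\varpi$'s is affine: any two differ by an invariant contact form, and these annihilate $\D$ (thus preserving $\varpi(\D)=1$). Equivalently, $I_0$ in the formula $\varpi = dI_0/\D(I_0)$ can be replaced by any other invariant with nonvanishing $\D$-derivative, such as $\D^\ell I^m$ of a suitable order. The condition $\det A \neq 0$ is polynomial in the vertical coefficients of $\varpi$, and the Lie--Tresse structure together with the functional independence of $I_0, I^1,\dots, I^{n-1}$ ensures this condition is not identically zero. A generic admissible $\varpi$ therefore yields nondegenerate $A$, and verifying this genericity statement is the main obstacle.
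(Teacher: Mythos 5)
Your first claim and the easy case are fine, but the proof is incomplete exactly where you say it is: the final sentence concedes that ``verifying this genericity statement is the main obstacle,'' and that obstacle is the actual content of the lemma. The situation that must be handled is when some generators $I^1,\dots,I^{\nu-1}$ have order equal to $k_0=\op{ord}(\varpi)$, so the correction term $\D(I^i)\,\varpi\bigl(\p_{u_{k_0}^j}\bigr)$ genuinely enters the matrix $A$. Your genericity argument is not automatic there, because the admissible perturbations of $\varpi$ are severely constrained: one may only add invariant contact forms, and the ones that change $\varpi\bigl(\p_{u_{k_0}^j}\bigr)$ are essentially spanned by $\tilde\vartheta^1,\dots,\tilde\vartheta^{\nu-1}$ themselves (equivalently by $dI^1,\dots,dI^{\nu-1}$ modulo lower order). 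So ``$\det A$ is a polynomial in the vertical coefficients of $\varpi$, hence nonzero for a generic choice'' does not follow until you exhibit at least one admissible $\varpi$ in this restricted family with $\det A\neq 0$ --- which is the assertion being proved. The paper does this constructively: it prolongs all generators to the common top order $k=\max_i k_i$ via $\hat I^i=\D^{k-k_i}I^i$, extracts from the finite-index generation property the nonvanishing of $\bigl(d\hat I^1\wedge\dots\wedge d\hat I^{n-1}\bigr)\bigl(\p_{u^1_k},\dots,\p_{u^{n-1}_k}\bigr)$, introduces the invariant splitting $\hat\Pi_0\oplus\hat\Pi_1$ of $\langle\p_{u^1_k},\dots,\p_{u^{n-1}_k}\rangle$ according to which generators have minimal order, and then modifies $\varpi$ by a combination of $dI^1,\dots,dI^{\nu-1}$ so that $\varpi$ vanishes on the corresponding subspace $\Pi_1$; with that choice $\det A$ factors into a product of two minors that are nonzero by the generation property.

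A secondary looseness: in your first case you assert that Lie--Tresse lets one take the generators ``functionally independent in the vertical directions at their respective top orders.'' Functional independence only gives $dI^1\wedge\dots\wedge dI^{n-1}\neq0$; the invertibility of the mixed symbol matrix $\bigl(\p I^i/\p u_{k_i}^j\bigr)$, with rows evaluated at different orders $k_i$, needs the prolongation-to-common-order argument (the symbols satisfy $d\hat I^i\bigl(\p_{u^j_k}\bigr)=\gamma^{k-k_i}\,dI^i\bigl(\p_{u^j_{k_i}}\bigr)$, so $\det A$ differs from the top-order minor of the prolonged system by a power of $\gamma$). This is fixable, but as written it is an assertion rather than an argument.
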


Matrix $A$ represents the symbol of the system of invariants, so
its nondegeneracy is an invariant condition.

 \begin{proof}
The first claim is straightforward. For the second, let $k=\max_{1\leq i<n}[k_i]$
and prolong the given invariants to order $k$:
$\hat{I}^i=\D^{k-k_i}I^i$. Note that if $\D=\gamma D_x$ then
$d\hat{I}^i\Bigl(\p_{u^j_k}\Bigr)=\gamma^{k-k_i}dI\Bigl(\p_{u^j_{k_i}}\Bigr)$.
We note the following 
 $$
\bigl(d\hat{I}^1\wedge\dots\wedge d\hat{I}^{n-1}\wedge\varpi\bigr)
\bigl(\p_{u^1_k},\dots,\p_{u^{n-1}_k},\D\bigr)\neq0.
 $$
Indeed, since $k>k_0$, this inequality follows from the condition of generation of a finite index subfield of differential invariants, discussed before the lemma.
It has an equivalent form
 \begin{equation}\label{dIIpi}
\bigl(d\hat{I}^1\wedge\dots\wedge d\hat{I}^{n-1}\bigr)
\bigl(\p_{u^1_k},\dots,\p_{u^{n-1}_k})\neq0.
 \end{equation}

If $k_0<k_i$ for all $i$, then the claim follows from 
$A^i_j=dI^i\Bigl(\p_{u^j_{k_i}}\Bigr)
=\gamma^{k_i-k}d\hat{I}^i\Bigl(\p_{u^j_k}\Bigr)$ as we conclude that 
$\det(A)$ differs from evaluation in \eqref{dIIpi} by the factor $\gamma^N$, $N=\sum k_i-(n-1)k$.

Let now some invariants $I^1,\dots,I^{\nu-1}$ have minimal order $k_0$
and the rest $I^{\nu},\dots,I^{n-1}$ order $>k_0$. We have invariant splitting
$\langle\p_{u^1_k},\dots,\p_{u^{n-1}_k}\rangle=\hat{\Pi}_0\oplus\hat{\Pi}_1=
\langle d\hat{I}^1,\dots,d\hat{I}^{\nu-1}\rangle^\perp\oplus
\langle d\hat{I}^\nu,\dots,d\hat{I}^{n-1}\rangle^\perp$,
where $\perp$ denotes the annihilator.
Modifying $\varpi$ by $dI^1,\dots,dI^{\nu-1}$ we can achieve 
$\varpi|_{\Pi_1}=0$, where $\Pi_1=\op{ad}_{\D}^{k-k_0}\hat{\Pi}_1$
(that is, $\sum a^j\p_{u^j_{k_0}}\in\Pi_1$ $\Leftrightarrow$ $\sum a^j\p_{u^j_k}\in\hat{\Pi}_1$). Then from \eqref{dIIpi} we obtain:
 $$
\!\!\!\bigl(\D^{k-k_1}\tilde\vartheta^1\wedge\dots\wedge
\D^{k-k_{n-1}}\tilde\vartheta^{n-1}\bigr)
\bigl(\p_{u^1_k},\dots,\p_{u^{n-1}_k})=
\left.\Bigl(d\hat{I}^1\wedge\dots\wedge d\hat{I}^{\nu-1}\Bigr)\right|_{\Lambda^{\nu-1}\Pi_1}\wedge
\left.\Bigl(d\hat{I}^\nu\wedge\dots\wedge d\hat{I}^{n-1}\Bigr)\right|_{\Lambda^{n-\nu}\Pi_0}\neq0
 $$
and the claim $\det(A)\neq0$ follows.
 \end{proof}

Applying $\D$ to $\tilde\vartheta^i$ results in invariant contact 1-forms 
of higher orders. To obtain a complete invariant coframe of $J^\infty$
we proceed as follows. Using coordinates $x,u_j^i$ on $J^\infty(M,1)$, 
define the (non-invariant) contact forms by $\theta_j^i = du_j^i-u_{j+1}^i dx$. 
Then we obtain a new set of $0$th-order contact forms 
$\vartheta_0^i = \sum a_j^i\theta_0^j\in C^\infty(J^\infty(M,1)) 
\otimes_{C^\infty(M)}\Omega^1(J^0(M,1))$ by the requirement 
 \begin{equation}\label{eq:invforms}
\Bigl(\D^{k_i}(\vartheta_0^i)\Bigr)\bigl(\partial_{u_{k_i}^j}\bigr) = 
\tilde\vartheta^i\bigl(\partial_{u_{k_i}^j}\bigr), \qquad 1\leq i,j<n.
 \end{equation} 
Condition \eqref{eq:invforms} determines the coefficients uniquely 
$a_j^i=\gamma^{-k_i}\tilde\vartheta^i\bigl(\p_{u^j_{k_i}}\bigr)$, whence
 \begin{equation}\label{eq:vartheta0}
\vartheta_0^i = \gamma^{-k_i}\sum_{j=1}^{n-1} \tilde\vartheta^i\Bigl(\partial_{u_{k_i}^j}\Bigr) \theta_0^j, \qquad 1\leq i<n. 
 \end{equation}
Even though we defined them in coordinates, the 1-forms $\vartheta_0^i$ are 
actually coordinate independent. Indeed, the action of a diffeomorphism on $\theta_0^l$ is counteracted by its action on $\partial_{u_k^l}$. 

 \begin{lemma}
The 1-forms $\vartheta_0^1,\dots,\vartheta_0^{n-1}$ of order $0$
are independent, $G$-invariant, and satisfy $\vartheta_0^i(\D)=0$. 
 \end{lemma}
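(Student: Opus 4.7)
The plan is to dispatch the three assertions separately, since the difficulty lies only in $G$-invariance. The vanishing $\vartheta_0^i(\D)=0$ is immediate from $\theta_0^j(D_x)=u_1^j-u_1^j=0$, which gives $\theta_0^j(\D)=\gamma\,\theta_0^j(D_x)=0$; formula \eqref{eq:vartheta0} exhibits $\vartheta_0^i$ as a $C^\infty(J^\infty)$-combination of the $\theta_0^j$, so $\vartheta_0^i(\D)=0$ follows. Independence is equally direct: a nontrivial linear relation among $\vartheta_0^1,\dots,\vartheta_0^{n-1}$ translates via \eqref{eq:vartheta0} into a nontrivial linear dependence among the rows of $A^i_j=\tilde\vartheta^i(\p_{u_{k_i}^j})$, contradicting the nondegeneracy supplied by the previous lemma.

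The real work is $G$-invariance, and the key idea is to recast the defining condition \eqref{eq:invforms}, which uses the noninvariant fields $\p_{u_{k_i}^j}$, as an intrinsic congruence. Writing an arbitrary order-zero contact form as $\omega=\sum_j b_j\,\theta_0^j$ and iterating the basic identity $\D(\theta_l^j)=\gamma\,\theta_{l+1}^j$ via the Leibniz rule, the component of $\D^{k_i}\omega$ along $\theta_{k_i}^j$ equals $\gamma^{k_i}b_j$, while every other term produced by the expansion lies in the span of $\theta_l^{j'}$ with $l<k_i$. Consequently \eqref{eq:invforms} is equivalent to the coordinate-free congruence
$$\D^{k_i}(\vartheta_0^i)\equiv\tilde\vartheta^i\pmod{\text{contact forms of order }<k_i},$$
and this congruence pins down the coefficients $b_j$ uniquely among zero-order contact forms.

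With this characterization in hand, $G$-invariance becomes formal. For any $g\in G$ the pullback $g^*$ preserves the contact ideal and its order filtration (prolonged diffeomorphisms of $M$ send order-$l$ contact forms to order-$l$ contact forms, and in particular $g^*\theta_0^j$ remains a combination of the $\theta_0^{j'}$), commutes with $\D$ by invariance of the derivation, and fixes $\tilde\vartheta^i$ by the previous lemma. Hence $g^*\vartheta_0^i$ is again a zero-order contact form satisfying the same characterizing congruence, and uniqueness forces $g^*\vartheta_0^i=\vartheta_0^i$. The analogous argument using Lie derivatives along prolonged generators in place of pullbacks handles $\mathfrak{g}$-invariance in the pseudogroup setting. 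The main potential obstacle is the symbolic bookkeeping needed to verify the top-symbol identity for $\D^{k_i}$ acting on an order-zero contact form; once that is under control, the rest of the argument is essentially automatic.
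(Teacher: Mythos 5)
Your proof is correct and follows essentially the same route as the paper: both reduce $G$-invariance to the invariance of the defining condition \eqref{eq:invforms} together with uniqueness of its solution among order-zero contact forms. The only cosmetic difference is that the paper expresses the invariance of that condition through the $G$-invariant subspace $\langle\p_{u_k^1},\dots,\p_{u_k^{n-1}}\rangle\subset TJ^k$ at the uniform top order $k$, whereas you use the dual statement that prolonged transformations preserve the order filtration on contact forms; these are equivalent, and your symbol computation for $\D^{k_i}$ acting on $\sum_j b_j\theta_0^j$ correctly identifies the top coefficient as $\gamma^{k_i}b_j$.
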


 \begin{proof}
The nontrivial part of this statement is the $G$-invariance, but this follows from 
the invariance of condition \eqref{eq:invforms}, which can be rewritten as $\Bigl(\D^k(\vartheta_0^i)\Bigr)\bigl(\partial_{u_k^j}\bigr) = 
\D^{k-k_i}\tilde\vartheta^i\bigl(\partial_{u_k^j}\bigr)$, where we exploit 
$G$-invariance of the subspace $\langle\p_{u^1_k},\dots,\p_{u^{n-1}_k}\rangle
\subset TJ^k$. 
 \end{proof}

Now define $\vartheta_j^i = \D^j(\vartheta_0^i)$, and notice that $\vartheta_j^i(\D)=0$ for every $i$ and $j$. These 1-forms are linearly independent (over $\mathfrak{A}$) at generic points in $J^k$,  and it is clear that they span the same subspace of $T^*J^\infty(M,1)$ as the forms $\theta_j^i$ at these points. We have thus proved the following theorem. 

 \begin{theorem} \label{th:invariantcoframe}
Consider a finite-dimensional algebraic Lie pseudogroup $G$ acting transitively on a manifold $M$. 
Then there exists a global (rational: defined almost everywhere) invariant frame 
of $T^*J^\infty(M,1)$ consisting of one 1-form $\varpi$ satisfying $\varpi(\D)=1$ and infinitely many 
contact forms $\vartheta_j^i$, related by $\D(\vartheta_i^j) = \vartheta_{i+1}^j$. 
 \end{theorem}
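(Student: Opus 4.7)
The plan is to assemble the pieces already prepared in the two preceding lemmas and propagate the construction to all orders. The 1-form $\varpi$ with $\varpi(\D)=1$ is given by \eqref{eq:varpi}, and the second lemma provides the 0-order invariant contact forms $\vartheta_0^1,\dots,\vartheta_0^{n-1}$ annihilating $\D$. What remains is to define the higher-order forms, verify the recurrence and invariance properties, and check the coframe property.

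I would set $\vartheta_j^i := L_\D^j(\vartheta_0^i)$, so that the recurrence $\D(\vartheta_j^i)=\vartheta_{j+1}^i$ holds by construction. The $G$-invariance of each $\vartheta_j^i$ is inherited from that of $\vartheta_0^i$ combined with the invariance of $\D$, because the Lie derivative along an invariant vector field commutes with the induced action on forms. The contact property propagates under $L_\D$: since $\D = \gamma D_x$ preserves the contact ideal $\mathcal{C}$, iterated Lie differentiation of a contact form stays inside $\mathcal{C}$. For $\vartheta_j^i(\D)=0$, I would argue by induction from the base case supplied by the second lemma: if the identity holds at level $j-1$, then Cartan's formula yields
\[
0 = \D\bigl(\vartheta_{j-1}^i(\D)\bigr) = (L_\D\vartheta_{j-1}^i)(\D) + \vartheta_{j-1}^i([\D,\D]) = \vartheta_j^i(\D).
\]

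The nontrivial step is the coframe property: at each finite level $J^k$, the truncated family $\{\varpi\}\cup\{\vartheta_j^i : 0 \leq j \leq k-1,\ 1 \leq i \leq n-1\}$ should be a rational basis of $T^*J^k$. I would verify this by tracking leading symbols: $\vartheta_j^i$ has order exactly $j$, and its order-$j$ symbol is, up to the scalar factor $\gamma^{j-k_i}$, essentially the $i$-th row of the matrix $A^i_l = \tilde\vartheta^i(\partial_{u_{k_i}^l})$ from the first lemma. Since $\det A \neq 0$ generically, the transition matrix from $\{dx,\theta_j^i\}$ to $\{\varpi,\vartheta_j^i\}$ at each finite order has nonvanishing rational determinant, and passing to the inverse limit over $k$ delivers the invariant coframe on $J^\infty(M,1)$. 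The principal obstacle is the order bookkeeping when $\varpi$ has order higher than some $\vartheta_0^i$, which forces one to work at a sufficiently large $J^k$ and to confirm that the inductive symbols remain nondegenerate on a Zariski-open dense subset at every finite order.
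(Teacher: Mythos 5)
Your proposal is correct and follows essentially the same route as the paper: the theorem is obtained by setting $\vartheta_j^i=\D^j(\vartheta_0^i)$ on top of the two preceding lemmas, with $\vartheta_j^i(\D)=0$ propagating by the Lie-derivative identity and the coframe property following from the nondegeneracy of the symbol matrix $A$. The only difference is that you spell out the block-triangular transition-matrix argument (leading symbol of $\vartheta_j^i$ equal to $\gamma^{j-k_i}$ times the $i$-th row of $A$) that the paper leaves implicit in the sentence ``it is clear that they span the same subspace as the forms $\theta_j^i$.''
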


\begin{remark}
    An important case where the theorem holds is when $G$ is a Lie group acting on $M$. The assumptions of the theorem are essentially the same as those of the global Lie--Tresse theorem. The finite-dimensionality is a simple way to guarantee that 1-forms \eqref{eq:varthetatilde} are independent. The action also may be given by a global finite-dimensional Lie algebra $\mathfrak{g}$ with algebraic action yet only local Lie group $G$.
    If $G$ or $\mathfrak{g}$ is infinite-dimensional, there will in general not be enough absolute differential invariants to generate an invariant frame of $T^*J^\infty(M,1)$ in this way (yet an invariant coframe may exist). 
\end{remark}

We will also need the notion of relative invariants. A jet function $R \in C^{\infty}(J^{\infty})$ is a \textit{relative invariant} if there 
exists a linear map $w:\mathfrak{g}\to C^{\infty}(J^{\infty})$ such that $L_{X^{(\infty)}}R=w(X) R$ for all $X \in \mathfrak{g}$. We call $w$ the weight of the relative invariant. In 
general there are only finitely many independent weights if properly
interpreted as cohomology classes \cite{KS2}. Relative invariants are 
useful because their vanishing locus is invariant. Consequently, we can use relative 
invariants to understand the structure of singular orbits.

\subsubsection{Curves in the Euclidean plane} \label{sect:euclideanex1}
As a simple example, that we will revisit several times, consider the group of Euclidean transformations on $\mathbb R^2$. The field of rational differential invariants is generated by 
\[ I_0 = \kappa^2 = \frac{y_2^2}{(y_1^2+1)^3}, \qquad \mathcal{D} = \frac{y_1^2+1}{y_2} D_x. \]
We define $I_1 = \D(I_0)$ and $\tilde \vartheta_3 = dI_1-\frac{\mathcal{D}(I_1)}{\mathcal{D}(I_0)} dI_0$, which we use to construct the invariant contact form of order $0$:
\[ \vartheta_0 = \frac{\tilde \vartheta_3(\partial_{y_3})}{\left(\frac{y_1^2+1}{y_2}\right)^3}(dy-y_1 dx) = 2 \frac{y_2^3}{(y_1^2+1)^5} (dy-y_1 dx).\]
The invariant 1-forms are spanned by 
 \[ 
\varpi = \frac{1}{\mathcal{D}(I_0)} dI_0 \quad\text{ and }\quad  
\vartheta_i = \mathcal{D}^i(\vartheta_0),\ i\geq0.
 \]

\subsection{The $\mathcal{C}$-spectral sequence}

In local coordinates, a variational problem for curves is defined through the functional 
\[ S[\sigma] = \int L(x,u^i|_\sigma,u^i_1|_\sigma,\dots) dx.\]
There is some ambiguity in the definition of $L dx$:
\begin{enumerate}[label=(\roman*)]
    \item If $\theta$ is a contact form, then $\theta|_{j^k \sigma}=0$, so the Lagrangian $L(x,u^i,u^i_1,\dots) dx+\theta$ describes the same variational problem. 
    \item If $\alpha$ is an exact 1-form, then the Lagrangian $L(x,u^i,u^i_1,\dots) dx+\alpha$ describes the same variational problem. 
\end{enumerate}
Because of (i), the Lagrangian can be considered an element in $\bar{\Omega}^1 = \Omega^1/\mathcal{C}$. The space $\bar{\Omega}^1$ is part of the horizontal de Rham complex, which for curves is very short: 
 \[ 
0 \to  C^\infty(J^\infty) \xrightarrow{\bar d} \bar{\Omega}^1 \to 0
 \]
The horizontal differential $\bar d$ is defined by $\bar d f = df+\mathcal C$. To define the Euler operator, whose kernel yields the Euler--Lagrange equations, we interpret the horizontal de Rham complex as a small part of a spectral sequence called the $\mathcal{C}$-spectral sequence \cite{V}, which we will now describe. This description is based on \cite[ch. 4.3]{V2}, but indices $p,q$ are swapped to align with those of the variational bicomplex below. 

The space $\mathcal{C}$ of contact 1-forms induces a filtration on $\Omega^\bullet$. If we define 
\[\mathcal{C}^q \Omega^{p+q} := \underbrace{\mathcal{C} \wedge \cdots \wedge \mathcal{C}}_{q\text{ times}} \wedge \Omega^{p} \subset \Omega^{p+q},\]
we see that $\mathcal{C}^q \Omega^{p+q} \subset \mathcal{C}^{q-1} \Omega^{p+q}$, and that $d (\mathcal{C}^{q} \Omega^{p+q}) \subset \mathcal{C}^{q} \Omega^{p+q+1}$. The first two nonzero pages of the spectral sequence associated with this filtration are defined by 
 \[ 
E_0^{p,q} = \mathcal{C}^q \Omega^{p+q} / \mathcal{C}^{q+1} \Omega^{p+q}\simeq
\Lambda^q\mathcal{C}\otimes\Lambda^p\bar{\Omega}^1, 
\qquad E_1^{p,q} = H^{p+q}(\mathcal{C}^q \Omega^{\bullet} / \mathcal{C}^{q+1} \Omega^{\bullet},\bar{d}).
 \]
The exterior differential induces a map 
\[d_r^{p,q} \colon E_r^{p,q} \to E_r^{p-r+1,q+r}.\] In the case of curves, which is our focus, $E_0^{p,q}=0$ holds for $p >1$. The horizontal de Rham complex can now be written 
\[ 0 \to E_0^{0,0} \xrightarrow{d_0^{0,0}} E_0^{1,0} \to 0.\]
Due to points (i) and (ii), the Lagrangian $\lambda=L\,dx$ is naturally associated with an element $[\lambda] \in E_1^{1,0}=H^1(\Omega^\bullet/\mathcal{C}^1 \Omega^\bullet)$. The Euler operator can now be defined in a global, coordinate-independent way as the map 
 \[ 
d_1^{1,0} \colon E_1^{1,0} \to E_1^{1,1}.
 \]

 \begin{remark} \label{rk:spectral}
Note that for curves, only the first three differentials are nontrivial,
 \[
d_0^{p,q} \colon E_0^{p,q} \to E_0^{p+1,q}, \qquad d_1^{p,q}\colon E_1^{p,q} \to E_1^{p,q+1}, \qquad d_2^{p,q}\colon E_2^{p,q} \to E_2^{p-1,q+2},
 \]
and then the spectral sequence stabilizes $E_\infty^{p,q}=E_3^{p,q}$.
 \end{remark}

\subsection{The variational bicomplex}\label{sect:variationalbicomplex}

To see that $d_1^{0,1}[\lambda]=0$ is equivalent to the standard Euler--Lagrange equations for the variational problem, one chooses a coordinate chart of $J^\infty(M,1)$, as described above. This essentially lets us work with $J^\infty(\mathbb R,\mathbb R^{n-1})$ instead of $J^\infty(M,1)$. The kernel of projection $\pi^\infty \colon J^\infty(\mathbb R,\mathbb R^{n-1}) \to \mathbb R$ gives a complement to the subbundle $\mathcal{C} \subset T^*J^\infty$,
\begin{equation}\label{splitting_cotangent}
    T^{*}J^{\infty} = H \oplus \mathcal{C},
\end{equation}
though this complement is not canonical from the viewpoint of $J^\infty(M,1)$. We will refer to 1-forms in $H$ as horizontal 1-forms. In the notation of Section \ref{sect:jets}, $H$ is spanned by $dx$. 
The direct-sum decomposition induces for each $p$ a splitting 
 \[
\Omega^p=\bigoplus_{r+s=p} \Omega^{r,s},
 \] 
where $\Omega^{r,s}$ consists of differential forms of bi-degree $(r,s)$, i.e.,
linear combinations of products of $r$ horizontal and $s$ contact, or vertical, forms. 
(Note that $\Omega^{r,s} = 0$ for $r > 1$ since we only consider curves.) We denote by $\pi_{r,s}$ the projection $\Omega^{\bullet} \rightarrow \Omega^{r,s}$. 

When acting on $\Omega^{r,s}$, the exterior derivative 
$d\colon \Omega^p \to \Omega^{p+1}$ decomposes into a horizontal and a vertical part, $d= d_H+d_V$, with 
\begin{equation}
    d_H \colon \Omega^{r,s} \to \Omega^{r+1,s}, \qquad d_V \colon \Omega^{r,s} \to  \Omega^{r,s+1}.
\end{equation}
The maps $d_H$ and $d_V$ satisfy 
\begin{equation}
     d_H^2=0, \qquad d_V^2=0, \qquad d_H d_V +d_V d_H=0,
\end{equation}
making $(\Omega^{\bullet, \bullet}, d_H, d_V)$ into a bicomplex, called the \textit{variational bicomplex} \cite{anderson}. On jet functions, these differentials are 
defined respectively by
\begin{equation}\label{def_hor_ver_der}
    d_H: \Omega^{0,0}(J^{\infty}) \rightarrow \Omega^{1,0}(J^{\infty}),\quad f \mapsto D_x(f)\ dx,\quad d_V: \Omega^{0,0}(J^{\infty}) \rightarrow \Omega^{0,1}(J^{\infty}),\quad f \mapsto \sum_{i,j } \frac{\partial f}{\partial u_j^i} \theta_j^i 
\end{equation}

\begin{remark}
    On any space of jets of sections of a fiber bundle, the variational bicomplex is uniquely defined because the splitting \eqref{splitting_cotangent} is canonical. 
    In contrast, the variational bicomplex is not a natural structure on the jets of curves (or more general submanifolds). In this case, there is no canonical complement to $\mathcal{C}$, and thus no canonical bigrading on $\Omega^p$. The spectral sequence $E_r^{p,q}$, on the other hand, only requires a filtration on $\Omega^p$ and is therefore well-defined. We will still use the variational bicomplex, and its invariant counterpart described in the next section, as computational tools, as is commonly done.
\end{remark}

To see the relationship between the variational problems and the variational bicomplex, the variational bicomplex is extended in the following way: 
\[\begin{tikzcd}
	&& {} & {} & {} \\
	& 0 & {\Omega^{0,2}} & {\Omega^{1,2}} & {\mathcal{F}^2} & 0 \\
	& 0 & {\Omega^{0,1}} & {\Omega^{1,1}} & {\mathcal{F}^1} & 0 \\
	0 & {\mathbb R} & {\Omega^{0,0}} & {\Omega^{1,0}}
	\arrow[from=2-2, to=2-3]
	\arrow["{d_V}", from=2-3, to=1-3]
	\arrow["{d_H}", from=2-3, to=2-4]
	\arrow["{d_V}", from=2-4, to=1-4]
	\arrow["\pi_*", from=2-4, to=2-5]
	\arrow["{}", from=2-5, to=1-5]
	\arrow[from=2-5, to=2-6]
	\arrow[from=3-2, to=3-3]
	\arrow["{d_V}", from=3-3, to=2-3]
	\arrow["{d_H}", from=3-3, to=3-4]
	\arrow["{d_V}", from=3-4, to=2-4]
	\arrow["\pi_*", from=3-4, to=3-5]
	\arrow["{\delta}", from=3-5, to=2-5]
	\arrow[from=3-5, to=3-6]
	\arrow[from=4-1, to=4-2]
	\arrow[from=4-2, to=4-3]
	\arrow["{d_V}", from=4-3, to=3-3]
	\arrow["{d_H}", from=4-3, to=4-4]
	\arrow["{d_V}", from=4-4, to=3-4]
    \arrow["\mathrm{E}"', from=4-4, to=3-5]
\end{tikzcd}\]

\medskip

Here $\mathcal{F}^i$ is defined as the quotient $\mathcal{F}^i = 
\Omega^{1,i}/d_H\Omega^{0,i}$ of differential forms of degree $(1,i)$ modulo 
divergence terms. In this context the Euler operator is defined by 
 \[ 
\mathrm{E}(\lambda) = \pi_*(d_V \lambda). 
 \]
Any element in $\mathcal{F}^1$ has a unique 
representative of the form $\sum A_i \theta_0^i \wedge dx$. To obtain this representative, the contact forms $\theta_j^i$ with $j \geq 1$ have been removed by repeated integration by parts, i.e., by using the formula 
 \[ 
Fd_V(D_x(u^i_{j-1}))\wedge dx +D_x(F)d_V(u_i)\wedge dx =0\qquad (\op{mod}\,d_{H} \Omega^{0,1})
 \]
to rewrite 
 \[
d_V(Ldx)= \frac{\p L}{\p u_j^i} d_V(u_j^i)\wedge dx =  (-D_x)^j 
\left(\frac{\p L}{\p u_j^{i}}\right)\theta_0^i\wedge dx \qquad (\op{mod}\,d_{H} \Omega^{0,1}).
 \]
Thus the coefficients $A_i$ resemble the classical Euler--Lagrange operator 
(summation over repeated indices)
\begin{equation}
    \mathrm{E}_i(L) = (-D_x)^j(L_{u_{j}^i}). \label{eq:classicalEL} 
\end{equation} 
(The map $\delta$ is related to the Helmholtz conditions, 
see \cite{anderson} for more details.)


\subsection{The invariant variational complex}

The definition of the Euler operator as $d_1^{0,1}$ is coordinate independent, and 
therefore invariant with respect to any group of point transformations. Similarly, 
$\pi_*\circ d_V$ is invariant with respect to any group of fiber-preserving 
transformations. However, the operator $\mathrm{E}_i(L)$ defined by the expression 
 \[ 
d_V(L dx) =\mathrm{E}_i(L)\theta_0^i\wedge dx\qquad (\op{mod}\,d_{H} \Omega^{0,1})
 \]
is not invariant in general. To remedy this, Kogan and Olver  introduced in \cite{kogan2001invariant, kogan2003invariant} an invariant version of the variational bicomplex, and used an invariant version of integration by parts to get an invariant representative of $\pi_* ( d_V\lambda)$. The invariant variational complex is constructed in a similar way to the variational bicomplex above, but by using an invariant coframe instead of $dx, \theta_j^i$. Kogan and Olver relied on the method of moving frames \cite{FO} for constructing the invariant coframe, but the concept of invariant bicomplex does not require the notion of moving frame. In this section we will construct an invariant variational bicomplex from a general invariant coframe, such as the one from Theorem \ref{th:invariantcoframe}. 

We will make the same assumptions about algebraicity and transitivity as in the global Lie--Tresse theorem \cite{kruglikov2016global} recalled in Section \ref{sect:invariants}, and assume that the field of rational differential invariants is generated by 
$I^1,\dots,I^r,\D$. Furthermore, we assume that there exists an invariant basis $\{\varpi, \vartheta_j^i \mid i=1,\dots, n-1, j \geq 0\}$ of 1-forms that span $T^*J^\infty$ at generic points, and that they take the following form: 
\begin{itemize}
    \item $\vartheta_j^i = \sum_{l=1}^{n-1} \sum_{s=0}^j F^i_l \theta_s^l$ are invariant contact forms of order $j$, with $F^i_l$ being rational functions on $J^\infty$.
\item The invariant horizontal 1-form $\varpi$ satisfies $\varpi(\D)=1$. 
\end{itemize}  
A coframe of the kind we constructed from the absolute differential invariants in order to prove Theorem \ref{th:invariantcoframe} is admissible for this purpose,
but any other choice is equally good for what follows. 

We let $\Omega_G^{r,s}$ denote the $G$-invariant $(r+s)$-forms of the form 
 \[ 
\sum A_{i_1 \cdots i_s}^{j_1 \cdots j_s} \varpi^r \wedge \vartheta_{j_1}^{i_1} \wedge \cdots \wedge \vartheta_{j_s}^{i_s},
 \]
where $A_{i_1\cdots i_s}^{j_1 \cdots j_s}$ are invariant functions on 
$J^\infty(M,1)$, $\varpi^0=1$, $\varpi^1=\varpi$, and $r \in \{0,1\}$. We have 
 \[
\Omega_G^p = \bigoplus_{r+s=p} \Omega_G^{r,s},
 \] 
as a vector space over the field of rational absolute differential invariants. 
We have $\Omega_G^p \subset \Omega^p$, but the splittings into 
$\Omega_G^{r,s}$ and $\Omega^{r,s}$ are not necessarily subordinated. 
In general, we have $\varpi\in\Omega^{1,0}\oplus\Omega^{0,1}$ from which it follows that 
 \[ 
\Omega_G^{r,s} \subset \Omega^{r,s} \oplus \Omega^{r-1,s+1}. 
 \]
The exterior derivative of an invariant differential form is invariant, and due to the above inclusion, we have 
 \[ 
d(\Omega_G^{r,s}) \subset \Omega^{r+1,s} \oplus \Omega^{r,s+1} \oplus \Omega^{r-1,s+2},
 \]
and thus 
\begin{equation}\label{eqn_diff_split_inv}
    d(\Omega_G^{r,s}) \subset \Omega_G^{r+1,s} \oplus \Omega_G^{r,s+1} \oplus \Omega_G^{r-1,s+2}.
\end{equation}
In fact, for a general $\varpi$, we have $d\varpi \in  \Omega_G^{1,1} \oplus 
\Omega_G^{0,2}$.  

\begin{remark}
As Kogan and Olver \cite{kogan2001invariant} point out, $\Omega_G^{\bullet,\bullet}$ 
is not a bicomplex in general. However, notice that we can always get a bicomplex by 
choosing appropriate $\varpi$, for example by defining $\varpi$ as in \eqref{eq:varpi}. 
Also in Section \ref{sect:variationalbicomplex} we got a bicomplex because of a particular choice of horizontal form $dx$, or more generally $f(x) dx$. 
What $f(x)dx$ and the 1-form \eqref{eq:varpi} have in common is that their exterior differential is contained in $\Omega^{1,1}$ and $\Omega_G^{1,1}$, respectively. 
\end{remark} 

Let us denote by $\pi_G^{r,s} \colon \Omega_G^{\bullet} \rightarrow \Omega^{r,s}_G$ 
the projection onto the forms of degree  $(r,s)$ with respect to the invariant 
coframe $\varpi, \vartheta^i_j$. In view of \eqref{eqn_diff_split_inv} the exterior derivative splits as $d = d_{\mathcal{H}} + d_{\mathcal{V}} + d_{\mathcal{W}}$, where
 \begin{equation}
d_{\mathcal{H}} = \pi_G^{r+1,s} \circ d: \Omega_G^{r,s} \rightarrow \Omega_G^{r+1,s},\ 
d_{\mathcal{V}} = \pi_G^{r,s+1} \circ d: \Omega_G^{r,s} \rightarrow \Omega_G^{r,s+1},\ 
d_{\mathcal{W}} = \pi_G^{r-1,s+2} \circ d: \Omega_G^{r,s} \rightarrow \Omega_G^{r-1,s+2}.
 \end{equation}
We call $d_{\mathcal{H}}$ and $d_{\mathcal{V}}$ the \textit{invariant horizontal differential} and \textit{invariant vertical differential}, 
respectively. In particular, we have that
\begin{equation} \label{eq:dH}
    d_{\mathcal{H}}(\Omega_G^{r,s}) \subseteq \Omega^{r+1,s}_G \subseteq \Omega^{r+1,s} \oplus \Omega^{r,s+1},
\end{equation}
which shows that $d_{\mathcal{H}}$ agrees with the usual horizontal 
differential $d_H$ up to forms of bi-degree $(r,s+1)$. We will use this property 
to relate the invariant form of Euler--Lagrange equations to the standard form. 
Later, in examples, we demonstrate how to compute the invariant differentials explicitly in practice.

\begin{remark}
    The maps $d_\mathcal{H}, d_\mathcal{V}, d_\mathcal{W}$ are not canonical, but depend on the choice of $\varpi$. The maps $d_0^{p,q}, d_1^{p,q}, d_2^{p,q}$ from Remark \ref{rk:spectral} are the canonical counterparts of $d_\mathcal{H}, d_\mathcal{V}, d_\mathcal{W}$ in the context of the $\mathcal{C}$-spectral sequence. 
\end{remark}

For an invariant Lagrangian\footnote{From now on we will denote by $L$ 
an invariant Lagrangian density (in \cite{kogan2001invariant} they used 
$\Tilde{L}$ for this purpose).}
$\lambda=L\,\varpi\in\Omega_G^{1,0}$ we can now obtain the invariant 
Euler--Lagrange equations by closely following the argument in 
\cite{kogan2001invariant, kogan2003invariant} without referring to 
invariantization operators. Invariance of $\lambda$ is equivalent to $L$ 
being a function of absolute differential invariants, i.e., 
$L=L(I^i,I_1^i,I_2^i,\dots)$, where $I_j^i = \D^j(I^i)$. 
Computing the differential of the $G$-invariant Lagrangian $L\,\varpi$ gives 
\begin{equation}
    d_{\mathcal{V}}(L\,\varpi) =d(L\,\varpi) =\sum_{i,j} \frac{\p L}{\p I^i_j}\ d_{\mathcal{V}}I_j^i\wedge\varpi + L\,d_{\mathcal{V}}\varpi,
\end{equation}
where for the final equality we use that 
$dL=\frac{\p L}{\p I^i_j}\cdot(d_{\mathcal{H}}I^i_j +d_{\mathcal{V}}I^i_j)$ 
together with the fact that 
$d_{\mathcal{H}}I^i_j$ is proportional to the invariant one-form $\varpi$. 
Given the invariants $F,G\in\Omega_G^{0,0}$, one obtains the invariant integration by parts formula \cite{kogan2001invariant, kogan2003invariant}
\begin{equation}\label{eqn_inv_int_by_parts}
    F d_{\mathcal{V}}(\mathcal{D}G) \wedge \varpi = - (\mathcal{D}F)\ d_{\mathcal{V}}G \wedge \varpi - F (\mathcal{D} G) d_{\mathcal{V}}\varpi\quad (\op{mod}\ d_{\mathcal{H}} \Omega_G^{0,1}).
\end{equation}
We use this formula to get an invariant representative for $d_{\mathcal{V}}(L\,\varpi)$ of the form
 \begin{equation}
d_{\mathcal{V}}(L\,\varpi) =\sum_i\mathcal{E}_i(L)d_{\mathcal{V}}I^i\wedge\varpi 
-\mathcal{H}(L)d_{\mathcal{V}}\varpi\quad (\op{mod}\,d_{\mathcal{H}}\Omega_G^{0,1}),
 \end{equation}
where 
 \begin{equation}\label{def_inv_Eulerian_Hamiltonian}
\mathcal{E}_{i}(L):= \sum_{j=0}^{\infty}(-\mathcal{D})^j\frac{\p L}{\p I_j^i}, 
\hspace{1cm} 
\mathcal{H}(L):=\sum_{j>k}I^i_{j-k}(-\mathcal{D})^k\frac{\p L}{\p I^i_j}-L
 \end{equation}
are the \textit{invariant Eulerian} (a vector invariant) and 
\textit{invariant Hamiltonian} (a scalar invariant) respectively. 

\medskip

The invariant forms $d_{\mathcal{V}} I^i$ and $d_{\mathcal{V}} \varpi$ can be expressed in terms of  the basis $\varpi, \D^j(\vartheta_0^i)$ ($j\geq0$),  which means that we can write them in terms of differential operators on the contact forms $\vartheta_0^i$:
\begin{equation}\label{def_inv_ops_A_B}
    d_{\mathcal{V}}I^i = \mathcal{A}_j^i(\vartheta_0^j),\hspace{1cm} d_{\mathcal{V}}\varpi = \mathcal{B}_j(\vartheta_0^j) \wedge \varpi.
\end{equation}
Here $\mathcal{A}=(\mathcal{A}^i_j)$ is an invariant matrix differential operator 
and $\mathcal{B}=(\mathcal{B}_j)$ is an invariant vector differential operator. 
Substituting these expressions into \eqref{def_inv_Eulerian_Hamiltonian} yields
 \begin{equation} \label{eq:dvL}
d_{\mathcal{V}}(L\,\varpi)= \left(\mathcal{E}_i(L)\, \mathcal{A}^i_j(\vartheta_0^j) 
-\mathcal{H}(L)\,\mathcal{B}_j(\vartheta_0^j)\right)\wedge\varpi 
\quad (\op{mod}\,d_{\mathcal{H}}\Omega_G^{0,1}).
 \end{equation}
To get this expression to the desired form, we continue to use integration by parts, but in a slightly different way. For a general invariant vertical 1-form $\vartheta$ we have 
 \[ 
d_{\mathcal{H}} \vartheta = \varpi \wedge \D(\vartheta),
 \]
where $\D( \vartheta)$ is the Lie derivative of $\vartheta$ along $\D$, i.e., $\D(\vartheta):=d(\vartheta(D)) + (d \vartheta)(\D,\cdot) = (d\vartheta)(\D,\cdot)$. It follows that \[ d_{\mathcal{H}}(F \vartheta) =- \D(F \vartheta) \wedge \varpi = -(\D(F) \vartheta+F \D(\vartheta)) \wedge \varpi.\] 
Through this formula, one defines the formal adjoint of $\D \colon \Omega_G^{0,1} \to \Omega_G^{0,1}$ by $\D^*=-\D\colon \Omega_G^{0,0}\to \Omega_G^{0,0}$. By iterating the formula, one defines for a differential operator 
$P=\sum_{i=0}^k p_i\D^i:\Omega_G^{0,1}\to\Omega_G^{0,1}$ its formal adjoint:
 \[ 
P^*:\Omega_G^{0,0}\to\Omega_G^{0,0}, \qquad P^*(F)=\sum_{i=1}^k(-\D)^i(p_iF).
 \]
This lets us move the derivatives away from $\vartheta_0^j$, and over to $\mathcal{E}_i$ and $\mathcal{H}$. The result is
 \begin{equation} \label{eq:dLvarpi}
d(L\,\varpi) =   
\left[\mathcal{A}^*\mathcal{E}(L)-\mathcal{B}^*\mathcal{H}(L)\right]_i 
\vartheta_0^i\wedge\varpi:=  
\left((\mathcal{A}_i^j)^*\mathcal{E}_j(L)-\mathcal{B}_i^*\mathcal{H}(L)\right) 
\vartheta^i_0\wedge\varpi\quad (\op{mod}\,d_{\mathcal{H}}\Omega_G^{0,1}).
 \end{equation}
Note that $(\mathcal{A}_i^j)^{*} =(\mathcal{A}^{*})_j^i$ as the adjoint matrix
consists of transposed adjoint entries.\footnote{The middle part of \eqref{eq:dLvarpi} involves the multiplication of matrix $\mathcal{A}^*$ and vector $\mathcal{E}(L)$ which in \eqref{eq:dvL} was a row, but now should be understood as a column. Then the contraction with $\vartheta_0 \wedge \varpi$ becomes the dot-product. } 

Thus, we obtain a canonical invariant representative for the image of the Euler 
operator, and we can define the invariant Euler--Lagrange equations of the 
Lagrangian $\lambda =L\,\varpi$ or (when $\varpi$ is understood to be fixed) 
of the Lagrangian function $L$ by $\mathrm{E}_{\mathrm{inv}}(L)=0$, where 
 \[ 
\mathrm{E}_{\mathrm{inv}}(L):= \left[\mathcal{A}^*\mathcal{E}(L)- \mathcal{B}^*\mathcal{H}(L)\right].
 \]
To understand how this is related to the classical Euler--Lagrange operator 
\eqref{eq:classicalEL}, we write $L\,\varpi = LR\,(dx+\theta)$ where $\theta$ is 
some vertical 1-form, and recall that the classical Euler operator 
(with respect to the basis $\theta_0^i\wedge dx$) is given by
 \[ 
\mathrm{E}_i(LR) = (-D_x)^j\left(\frac{\partial}{\partial{u_j^i}}(LR)\right).
 \]
Due to the fact \eqref{eq:dH}, we can compare the horizontal parts to get 
 \begin{equation}\label{eq:W}
\vartheta_0^j\wedge\varpi= W_k^j\theta_0^k\wedge dx\quad (\op{mod}\,\Omega^{0,2}). 
 \end{equation} 
where $W$ is a matrix-valued relative differential invariant, and we see that
 \begin{equation}\label{WEL}
\mathrm{E}(LR) = W \cdot \mathrm{E}_{\mathrm{inv}}(L).
 \end{equation} 

In particular, for regular points in $J^\infty$ where $W$ is defined and 
nondegenerate, the classical Euler--Lagrange equations are equivalent to the 
invariant Euler--Lagrange equations $\mathrm{E}_{\mathrm{inv}}(L)=0$. 

We summarize this section in the following theorem.

 \begin{theorem}[\textbf{Invariant Euler--Lagrange Equations}]\label{mainThm}
Consider a finite-dimensional algebraic Lie pseudogroup $G$ acting transitively 
on a smooth manifold $M$.   Let  $I^1,\dots,I^r,\mathcal{D}$ generate 
the field of rational differential invariants on $J^\infty(M,1)$. 

Then there exists an invariant coframe, defined at generic points of 
$J^\infty(M,1)$, which is generated by invariant contact forms $\vartheta_0^i$ 
of order $0$ $(1\leq i<n)$, an invariant 1-form $\varpi$ satisfying 
$\varpi(\mathcal{D}) = 1$, and the invariant derivation $\mathcal{D}$.  
The Euler operator corresponding to an invariant Lagrangian 
 \[
\lambda =L\,\varpi
 \]
takes the form \eqref{WEL}, where matrix $W$ is determined by condition \eqref{eq:W}
and the invariant Euler operator 
${E}_{\mathrm{inv}}(L):= [\mathcal{A}^*\mathcal{E}(L)-\mathcal{B}^*\mathcal{H}(L)]$
is defined in terms of the invariant Eulerian and Hamiltonian \eqref{def_inv_Eulerian_Hamiltonian}
and the differential operators $\mathcal{A}$ and $\mathcal{B}$ determined by \eqref{def_inv_ops_A_B}.
 \end{theorem}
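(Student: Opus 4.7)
The plan is to assemble the machinery built up in this section into a formal proof. Existence of the invariant coframe with $\vartheta_j^i = \mathcal{D}^j(\vartheta_0^i)$ and $\varpi(\mathcal{D})=1$ is exactly Theorem \ref{th:invariantcoframe}, so that part requires no further argument. What remains is to derive formula \eqref{WEL} for the Euler operator and to identify the matrix factor $W$ determined by \eqref{eq:W}.

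First, I would write $L = L(I^i_j)$ with $I^i_j = \mathcal{D}^j(I^i)$ and expand $d(L\varpi)$ using the decomposition $d = d_{\mathcal{H}} + d_{\mathcal{V}} + d_{\mathcal{W}}$ from \eqref{eqn_diff_split_inv}. Since the invariant coframe has only one horizontal direction, $\Omega_G^{2,0}=0$, so $d_{\mathcal{H}}(L\varpi)=0$; the only contribution to the class in $\mathcal{F}^1$ comes from the $\Omega_G^{1,1}$-piece, namely
\[ d_{\mathcal{V}}(L\varpi) = \sum_{i,j} \frac{\partial L}{\partial I^i_j}\, d_{\mathcal{V}}I^i_j \wedge \varpi + L\,d_{\mathcal{V}}\varpi. \]
Iterating the invariant integration by parts formula \eqref{eqn_inv_int_by_parts} moves each $\mathcal{D}$ off $d_{\mathcal{V}}I^i_j = \mathcal{D}^j(d_{\mathcal{V}}I^i)$ onto the coefficient $\partial L/\partial I^i_j$, producing the invariant Eulerian $\mathcal{E}_i(L)$ multiplying $d_{\mathcal{V}}I^i\wedge\varpi$, while the residual terms land on the $d_{\mathcal{V}}\varpi$ factor and assemble into the invariant Hamiltonian $\mathcal{H}(L)$. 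This reproduces the intermediate expression \eqref{eq:dvL}.

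Next, I would substitute the expansions $d_{\mathcal{V}}I^i = \mathcal{A}^i_j(\vartheta_0^j)$ and $d_{\mathcal{V}}\varpi = \mathcal{B}_j(\vartheta_0^j)\wedge\varpi$ from \eqref{def_inv_ops_A_B} into \eqref{eq:dvL}, and apply one more round of invariant integration by parts in the form of passing to formal adjoints $\mathcal{D}^* = -\mathcal{D}$. This shifts the operators $\mathcal{A}$ and $\mathcal{B}$ onto $\mathcal{E}(L)$ and $\mathcal{H}(L)$ and produces the canonical invariant representative $[\mathcal{A}^*\mathcal{E}(L)-\mathcal{B}^*\mathcal{H}(L)]_i\,\vartheta_0^i\wedge\varpi$ of the class of $d_{\mathcal{V}}(L\varpi)$ modulo $d_{\mathcal{H}}\Omega_G^{0,1}$, matching \eqref{eq:dLvarpi}.

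Finally, to derive \eqref{WEL}, I would compare this invariant representative with the classical Euler operator \eqref{eq:classicalEL} applied to the function $LR$ obtained by writing $L\varpi = LR(dx+\theta)$. Using the property \eqref{eq:dH} that $d_{\mathcal{H}}$ agrees with $d_H$ modulo $\Omega^{r,s+1}$ and the change of basis \eqref{eq:W} from $\vartheta_0^i\wedge\varpi$ to $\theta_0^i\wedge dx$, the projection onto $\mathcal{F}^1$ yields exactly $\mathrm{E}(LR) = W\cdot\mathrm{E}_{\mathrm{inv}}(L)$. The main subtlety, and the step I expect to require the most care, is that $\Omega_G^{\bullet,\bullet}$ is not a sub-bicomplex of $\Omega^{\bullet,\bullet}$ (as emphasized after \eqref{eqn_diff_split_inv}), so matching classes modulo $d_H\Omega^{0,1}$ with classes modulo $d_{\mathcal{H}}\Omega_G^{0,1}$ requires tracking the off-diagonal $d_{\mathcal{W}}$-contributions and verifying that they are absorbed by the divergence terms quotiented out in $\mathcal{F}^1$.
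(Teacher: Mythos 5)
Your proposal is correct and follows essentially the same route as the paper: the theorem is stated as a summary of the derivation in the section on the invariant variational complex, which proceeds exactly as you describe (coframe from Theorem \ref{th:invariantcoframe}, expansion of $d_{\mathcal{V}}(L\varpi)$, invariant integration by parts to get $\mathcal{E}$ and $\mathcal{H}$, substitution of \eqref{def_inv_ops_A_B} and passage to adjoints, then comparison with the classical Euler operator via \eqref{eq:dH} and \eqref{eq:W}). The subtlety you flag at the end is precisely what the paper addresses with the observation \eqref{eq:dH} that $d_{\mathcal{H}}$ agrees with $d_H$ up to forms of one higher vertical degree, which are killed in the quotient defining $\mathcal{F}^1$.
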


The invariant coframe in the theorem can, for example, be the one defined by \eqref{eq:varpi} and \eqref{eq:vartheta0}. 
Note that in Theorem \ref{th:invariantcoframe} the assumption of finite-dimensional $G$ is to guarantee the existence of an invariant coframe.

 \begin{remark}
The extremals of $L$ are given by the equation ${E}_{\mathrm{inv}}(L)$ in the 
regular region of jet-space, however due to formula \eqref{WEL}, singular extremals 
are supported in the stratum where the rank of $W$ drops, or where $W$ is undefined or 
is singular\footnote{In the case of scalar algebraic $W$, this stratum ($W$-divisor) 
corresponds to zeros and poles of $W$; in some cases they are actual extremals. 
An example is given by conics and straight lines for 
the projective parameter considered in the next section.}. 
This relative invariant was introduced in \cite{kogan2001invariant} in the 
context of moving frames, where a question about whether it is essential was raised. 
We show with the example of conformal geodesics below that its vanishing may 
actually give all extremals.

Note however that due to definition \eqref{eq:W} we have the freedom of multiplying $W$
by an absolute differential $I$ by changing the basis element $\varpi$ to $I \varpi$. This may change the zeros and poles of $W$, yet this is just 
a factor exchange in the product \eqref{WEL} given by a matrix of absolute differential invariants, and so should be rather considered as part of 
the equation ${E}_{\mathrm{inv}}(L)$ with a proper choice of generators.
\end{remark}

\subsubsection{Curves in the Euclidean plane}\label{S251}

Let us continue with the example from Section \ref{sect:euclideanex1}. We use
 \[ 
I_0 = \frac{y_2^2}{(y_1^2+1)^3}, \qquad \varpi = 
\frac{y_2}{(y_1^2+1)^2} (dx+y_1 dy), \qquad \vartheta_0 = 
\frac{y_1^2+1}{y_2} (dy-y_1 dx), \qquad \mathcal{D} = \frac{y_1^2+1}{y_2} D_x, 
 \]
as generators for the field of rational absolute differential invariants and for the rational invariant 1-forms. Define
 \[
I_i = \D^i(I_0), \qquad \vartheta_i = \D^i(\vartheta_0).
 \]
A direct symbolic computation in coordinates leads to 
\begin{align*}
    \mathcal{A}(\vartheta_0) &= d_{\mathcal{V}} I_0 = \left(2 I_0^2\D^2 +3 I_0 I_1 \D+I_0(I_2+2 I_0) \right)(\vartheta_0),\\
    \mathcal{B}(\vartheta_0) &= - i_\D d\varpi = \left(I_0 \D^2+\frac{3}{2} I_1\D+ \frac{1}{2} I_2\right)(\vartheta_0),
\end{align*}
and thus the Euler--Lagrange equation is given by 
$\mathcal{A}^*\mathcal{E}(L)-\mathcal{B}^*\mathcal{H}(L)=0$, where
 \begin{equation}\label{EHeqEucG}
\mathcal{A}^* = 2 I_0^2\D^2 +5 I_0 I_1 \D+(2I_0 I_2+I_1^2+2I_0^2), \qquad \mathcal{B}^* = I_0 \D^2 +\frac{1}{2} I_1 \D.
 \end{equation}
Notice that if we instead choose $\varpi = \D(I_0)^{-1} dI_0$ from $\eqref{eq:varpi}$, the differential operator 
$\mathcal{A}$ will vanish, but the expression for $\mathcal{B}$ will become more complicated. 
 
In particular, for the Lagrangian $L=I_0^m$ we obtain from formula \eqref{EHeqEucG}
the equation ${E}_{\mathrm{inv}}(L)=0$, which modulo the trivial factor $K_0$ is equivalent to
 \begin{equation}\label{ELeuclGlob}
(m+\tfrac12)I_0 I_2 + (m^2-\tfrac14) I_1^2+I_0^2=0.
 \end{equation}

\subsection{Computing invariant Euler--Lagrange equations with moving frames} \label{sect:movingframe}

Let us recall the moving frame method and its application to the invariant 
variational complex following Kogan--Olver \cite{kogan2001invariant},
\cite{kogan2003invariant}. 
We emphasize that this setup, whenever applicable, 
is extremely convenient for computations since explicit formulas for 
the invariant differentials can be obtained  from
the corresponding Lie algebra action.

\medskip

Consider a transitive Lie group action of an $l$-dimensional connected Lie group $G$ 
on $M$. By prolongation, we obtain an action $\psi: G \times 
J^{\infty} \rightarrow J^{\infty}$ of $G$ on $J^{\infty}(M,1)$ which is free on an 
open dense subset $\mathcal{O} \subseteq J^{\infty}$ \cite{AO}. 
We describe how to invariantize differential forms on $J^{\infty}$ using 
a moving frame. Recall that a moving frame on $J^{\infty}$ for a free $G$-action 
is a $G$-equivariant map $\rho: J^{\infty}\supseteq\mathcal{O}\rightarrow G$. 
Such a moving frame is equivalent to a local cross-section 
$\Sigma\subseteq\mathcal{O}$ transversal to the orbits of the $G$-action. 
Indeed, the moving frame assigns to a point $z \in \mathcal{O}$ the group 
element in $G$, which transforms $z$ to the unique point lying in the intersection 
$\text{Orbit}(z)\cap\Sigma$ of the orbit with the cross-section. 
Conversely, the cross section can be obtained by solving for the points in 
$J^{\infty}$ on which the moving frame becomes the identity element. 

\begin{definition}[\textbf{Invariantization}]
Let $\alpha \in \Omega^{\bullet}(J^{\infty})$ be a differential form. The \textit{invariantization} $\iota(\alpha)$ is the $G$-invariant differential form on $J^{\infty}$
\begin{equation}\label{def_invariantization}
    \iota(\alpha): z \mapsto (\psi_{\rho(z)}^{*}\alpha)_z
\end{equation}
obtained by pulling back $\alpha$ pointwise along the action $
\psi_{\rho(z)}: J^{\infty} \rightarrow J^{\infty}$ of the moving frame group element.

\medskip

(\textbf{NB}: The invariantization $\iota(\alpha)$ is \textit{not} the 
same as the pullback of $\alpha$ along $\psi_{\rho}: J^{\infty} 
\rightarrow J^{\infty},\ z \mapsto \rho(z) \cdot z $. In particular, 
when differentiating the invariantization, one also differentiates the 
group parameters and consequently the differential $d$ and 
invariantization $\iota$ do not commute.)
\end{definition}

In this way, we obtain the invariantization map 
$\iota: \Omega^{\bullet} \rightarrow \Omega_G^{\bullet}$ 
from the differential forms to the $G$-invariant differential forms on $J^{\infty}$. 
The invariantization of jet coordinates yields differential invariants of 
the same order or constants (phantom invariants) coming from restriction 
of jet-coordinates to the cross-section. 

 \begin{remark}\label{Rk26}
In general, the orbit foliation does not admit a global section, 
but only a quasi-section (intersecting every orbit finitely many times). 
For this  reason, the output of invariantization is only a local invariant.
In fact, the invariantization of the restriction of a local differential invariant  
to the cross-section is the same invariant, locally near the ``section''. 
Yet, the invariantization of the restriction of a global 
differential invariant is the same invariant globally.
Thus, the moving frame method can also be applied to compute global invariants:
the functions on the cross-section giving rise to global differential invariants
via invariantization, form a subfield of the field of all rational functions.
 \end{remark}

Invariantizing the coframe 
$(dx, \theta^i_j)$ gives an invariant coframe on $J^{\infty}$. We say that 
$\varpi := \iota(dx)$ is the \textit{invariant horizontal form} whereas 
$\vartheta^i_j := \iota(\theta^i_j)$ are the \textit{invariant contact forms} 
of order $j$. As before, the invariant coframe induces a decomposition 
$\Omega_G^{\bullet} = \oplus_{r,s}\ \Omega_G^{r,s}$ where each summand consists of 
linear combinations of $r$ invariant horizontal forms and $s$ invariant contact forms. We let $\mathcal{D}$ denote the invariant derivation given by $\varpi(\mathcal{D})=1$. 

The exterior derivative $d: \Omega_G^{\bullet} \rightarrow \Omega_G^{\bullet}$ splits 
accordingly as $d = d_{\mathcal{H}} + d_{\mathcal{V}} + d_{\mathcal{W}}$. By using 
the moving frame method, one can explicitly compute these invariant differentials. 
To this end, choose a basis $X_1,\dots,X_l$ for the Lie algebra $\mathfrak{g}$ 
of $G$ and use this basis to decompose the Maurer--Cartan form 
$\mu \in \Omega^1(G, \mathfrak{g})$ into real-valued forms 
$\mu^1, \dots, \mu^l \in \Omega^1(G)$. Decompose the pullback 
$\rho^{*}\mu^k = \gamma^k + \epsilon^k \in \Omega^1(\mathcal{O})$ into its invariant horizontal part 
$\gamma^k$ and its invariant contact part $\epsilon^k$. 
Let $\alpha \in \Omega^{r,s}$ be a differential form of bi-degree $(r,s)$. 
Then the invariant horizontal differential $d_{\mathcal{H}}: \Omega_G^{r,s} \rightarrow \Omega_G^{r+1,s}$ can be computed by the formula
 \begin{equation}\label{def_inv_hor_der}
d_{\mathcal{H}}\iota(\alpha) = \iota(d_H\alpha) + \sum_{k = 1}^{l} \gamma^k \wedge \iota(\pi_{r,s}\left(X_k(\alpha)\right)),    
 \end{equation}
and the invariant vertical differential $d_{\mathcal{V}}: \Omega_G^{r,s} \rightarrow \Omega_G^{r,s+1}$ can be computed by the formula
 \begin{equation}\label{def_inv_vertical_der}
d_{\mathcal{V}}\iota(\alpha) =   \iota(d_V\alpha) + \sum_{k = 1}^{l} \epsilon^k \wedge \iota(\pi_{r,s}\left(X_k(\alpha)\right)) +  \gamma^k \wedge \iota(\pi_{r-1,s+1}\left(X_k(\alpha)\right)).  
 \end{equation}
For jet-functions $f \in C^{\infty}(J^{\infty})$, the invariant horizontal 
and invariant vertical differentials reduce to
 \begin{equation}\label{eqn_inv_der_on_functions}
d_{\mathcal{H}}\iota(f) = \iota(d_Hf)+\sum_{k = 1}^l \iota(X_k(f))\cdot\gamma^k, \hspace{1cm} 
d_{\mathcal{V}}\iota(f) =\iota(d_Vf)+\sum_{k = 1}^l \iota(X_k(f))\cdot\epsilon^k.
 \end{equation}
In equations \eqref{def_inv_hor_der} - \eqref{eqn_inv_der_on_functions}, $X_k(\alpha)$ and $X_k(f)$ denote the Lie derivatives along $X_k$ understood as a vector field on $M=J^0$ prolonged to $J^\infty$. 

In practice, we can compute the invariant Euler--Lagrange equations using the 
following scheme. Fix a cross-section $\Sigma\subseteq\mathcal{O}$ by 
setting $l=\op{dim}(G)$ each jet-coordinates equal to some constant (care must be taken to ensure that the cross-section intersects generic orbits). 
Obtain a generating set of scalar differential invariants $I^i$
by invariantizing suitable jet functions
(note that the order of $I^i$ will be the same as the order of the chosen jet function). 
The rest is formulated as an algorithm below.

\begin{table}[h!]
    \centering
\scalebox{1}{
\begin{tcolorbox}
\textbf{Algorithm. (Computing invariant Euler--Lagrange eqs using cross-section)}.
\newline
(\textbf{Input}: Basis $X_i$ of Lie algebra $\mathfrak{g}\subseteq\mathfrak{X}(J^\infty)$, cross-section $\Sigma\subseteq\mathcal{O}$ transversal to regular orbits)
\begin{itemize}
    \item Step 1.) Apply  $d_{\mathcal{H}}$ and $d_{\mathcal{V}}$ to the $l$ jet coordinates that are constant on the cross-section.
    
    \item Step 2.) Solve this linear system for $\gamma^k$ and $\epsilon^k$. (So we have computed the pullbacks of Maurer--Cartan forms under moving frame $\rho: \mathcal{O} \rightarrow G$.) 

    \item Step 3.) Compute invariant vertical differentials $d_{\mathcal{V}}I^i$ and $d_{\mathcal{V}}\varpi$ using \eqref{def_inv_vertical_der}.
    
    \item Step 4.) Compute invariant horizontal differential of invariant contact forms $
    \vartheta^i_j$ using \eqref{def_inv_hor_der}. Note that the first term in \eqref{def_inv_hor_der} is given by $\varpi \wedge \vartheta_{j+1}^i$.

    \item Step 5.) Using the fact that $d_{\mathcal{H}}\vartheta^i_j = 
    \varpi \wedge \mathcal{D}(\vartheta^i_j)$ and the previous 
    step, express each invariant contact form $\vartheta^i_j = \sum P_l^i(\vartheta^l_0)$ 
    as the sum of actions of invariant differential operators $P_l^i$ on the invariant 
    contact forms of order $0$.

    \item Step 6.) Rewrite $d_{\mathcal{V}}I^i$ and $d_{\mathcal{V}}\varpi$ in terms of actions of invariant DOs on invariant contact forms of order 0. From these we get the invariant DOs $\mathcal{A}, \mathcal{B}$ as in \eqref{def_inv_ops_A_B}.

    \item Step 7.) Compute the formal adjoints $\mathcal{A}^{*}$ and $\mathcal{B}^{*}$. 
    Invariant EL are then computed using the formula $\mathcal{A}^{*}\mathcal{E}(L) - 
    \mathcal{B}^{*}\mathcal{H}(L) = 0$. (Valid on a regular neighborhood of $\Sigma$.)

\end{itemize}
(\textbf{Output}: Formula for invariant EL eqs in terms of abstract 
invariants $I^i$ and $\mathcal{D}$.)
\end{tcolorbox}}\label{algorithm}
\end{table}

We emphasize that the invariants $I^i$ are not given by explicit expressions if we 
only have a cross-section. Thus the resulting invariant EL are also given in terms of these abstract invariants (given as invariantizations of certain jet functions).  In order to get explicit expressions, there are a couple ways we can 
proceed. First, we can try to compute the explicit moving frame, but this is 
computationally difficult as it requires prolonging the group action. Also the group action may not be given explicitly, for example if the starting point is only a Lie algebra of vector fields.  Secondly, if we already
have explicit expressions for the differential invariants, we can relate these 
to the abstract invariants coming from invariantization by 
restricting to the cross-section and compare to the (restriction of) invariants $I^i$.
Furthermore, we can also use a different invariant coframe such as in Section 
\ref{sect:invariants}, and relate these to the invariantized coframe. 
This latter approach is particularly helpful for computing $W$ 
because in this case it is crucial to have explicit expressions for the 
invariant contact forms of order $0$. We demonstrate this with the 
example of conformal geodesics in 
\S\ref{singular_extremals_conf_geod}.

\subsubsection{Curves in the Euclidean plane}

Let us revisit the example from Section \ref{S251} with the moving frame approach.
We may use the cross-section determined by the vanishing of $x,y,y_1$. Then the invariantizations of $x,y,y_1$ vanish by construction, whereas 
\begin{equation}
    \iota(y_2) = \kappa := \frac{y_2}{(1 + y_1^2)^{3/2}}.
\end{equation}
is the curvature. The higher-order differential invariants are obtained by applying the arclength total derivative $D_s = \frac{1}{\sqrt{1 + y_1^2}} D_x$ to $\kappa$, let us denote $\kappa_i = D_s^i(\kappa)$. The invariant horizontal form is given by
\begin{equation}
    \varpi = \iota(dx) = \frac{dx + y_1 dy}{\sqrt{1 + y_1^2}}.
\end{equation}
Finally, one obtains that the invariant vertical differentials of $\kappa$ and $\varpi$ are given by
\begin{equation}
    d_{\mathcal{V}}\kappa = \iota(\theta_2) =  (D_s^2 + \kappa^2) \iota(\theta_0), \qquad d_{\mathcal{V}}\varpi = - \kappa\ \iota(\theta_0) \wedge \varpi, 
\end{equation}
so that $\mathcal{A}^{\kappa} = D_s^2 + \kappa^2$ and $\mathcal{B} = - \kappa$. Both invariant differential operators are formally self-adjoint so the invariant Euler--Lagrange equations for an invariant Lagrangian $L\, ds$ become
\begin{equation}\label{ELeuclLoc}
    (D_s^2 + \kappa^2) \mathcal{E}(L) + \kappa \mathcal{H}(L) = 0,
\end{equation}
where $\mathcal{E}(L), \mathcal{H}(L)$ are the invariant Eulerian and 
invariant Hamiltonian (\ref{def_inv_Eulerian_Hamiltonian}), respectively.

\medskip

Let us now re-derive the formulas from Section \ref{S251} through the moving frame 
as indicated in Remark \ref{Rk26}. The global horizontal form is $\kappa\, ds$
and the Lagrangian density is $L=I_0^m=\kappa^{2m}$. Thus, the Lagrangian
can be re-written in local invariants as $L\, \kappa\, ds=\kappa^{2m+1}ds$ 
and then from \eqref{ELeuclLoc} with $L\mapsto\kappa^{2m+1}$ 
we obtain the Euler--Lagrange equation (again omitting the trivial factor $\kappa$)
 \begin{equation}\label{ELeuclLoc2}
(2m+1)\kappa\,\kappa_{ss} +(4m^2-1)k_s^2 +\kappa^4 =0.
 \end{equation}
This formula converts to \eqref{ELeuclGlob} upon relation of the invariants:
$\kappa=\sqrt{I_0}$, $\kappa_s=\tfrac12 I_1$, $\kappa_{ss}=\tfrac12\sqrt{I_0}I_2$.

\subsection{Global invariants via cross-section}\label{S27}

An important property of the invariantization operator $\iota$ of \eqref{def_invariantization} defined via a cross-section $\Sigma$ is the following: 
if $J$ is a local differential invariant (function or form or other geometric object), 
then in its domain of definition (open but not necessarily dense in the jet-space) we have: 
 \begin{equation}\label{JKJ}
 \iota(J)=J.
 \end{equation}
This may be violated globally (as in the example of the Euclidean group 
action on jets of curves in the plane, where $J=\kappa$  becomes multi-valued upon analytic continuation). Yet, whenever $J$ is a global differential invariant, 
equation \eqref{JKJ} holds on an open dense set.
Indeed, a rational function on the space of jets is uniquely defined by its restriction to any open set.

Note that invariantization of $J$ is determined by evaluation at the points 
of the cross section $\Sigma$, denoted $J|_\Sigma$ 
(the same as restriction for functions but richer for differential forms)
and then \eqref{JKJ} may be written more instructively as $\iota(J|_\Sigma)=J$.
Normalization of the moving frame (evaluation at $\Sigma$) is an effective way to compute 
in the calculus of differential invariants, as it reduces the number of parameters.

 \begin{theorem}
If $L\,\varpi$ is a globally invariant Langrangian form with respect to a Lie group $G$, 
then its Euler--Lagrange equation $\mathrm{E}_{\mathrm{inv}}(L)$ is also globally $G$-invariant.
 \end{theorem}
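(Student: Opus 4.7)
The plan is to trace through the construction of $\mathrm{E}_{\mathrm{inv}}(L)$ and check that global $G$-invariance is preserved at each stage; the argument will rest on the globality of the coframe guaranteed by Theorem \ref{th:invariantcoframe} together with the global Lie--Tresse theorem. First, I would fix generators $I^1,\dots,I^r$ and $\mathcal{D}$ for the field $\mathfrak{A}$ of rational absolute differential invariants, so that the invariant coframe $\{\varpi,\vartheta_0^i\}$ built in \eqref{eq:varpi}--\eqref{eq:vartheta0} consists of rational global $G$-invariant 1-forms, defined on a Zariski-open dense subset of $J^\infty(M,1)$. The hypothesis that $L\,\varpi$ is globally $G$-invariant, together with the global invariance of $\varpi$, forces $L$ to be a globally invariant function; by global Lie--Tresse it is therefore a rational expression in the global invariants $I^i_j=\mathcal{D}^j(I^i)$.

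Next I would verify that the two scalar ingredients of the invariant Euler operator are global. The invariant Eulerian $\mathcal{E}_i(L)=\sum_j(-\mathcal{D})^j(\partial L/\partial I^i_j)$ is obtained by partial differentiation of a globally invariant function with respect to globally invariant coordinates on the quotient, followed by iterated application of the global invariant derivation $\mathcal{D}$; hence $\mathcal{E}_i(L)\in\mathfrak{A}$. The same reasoning applied to \eqref{def_inv_Eulerian_Hamiltonian} gives $\mathcal{H}(L)\in\mathfrak{A}$.

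Then I would address the differential operators $\mathcal{A}$ and $\mathcal{B}$ defined by \eqref{def_inv_ops_A_B}. Since $dI^i$ and $d\varpi$ are global $G$-invariant 2-forms (as exterior differentials of global invariants), and the bigraded projections $\pi_G^{r,s}$ are taken with respect to the global invariant coframe, the coefficients of $\mathcal{A}^i_j$ and $\mathcal{B}_j$ when expressed in the basis $\mathcal{D}^k(\vartheta_0^j)$ lie in $\mathfrak{A}$. Taking formal adjoints only involves multiplication by and applications of $\mathcal{D}$ to such coefficients, so $\mathcal{A}^*$ and $\mathcal{B}^*$ are again invariant differential operators with globally invariant coefficients. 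Composing, the column $\mathcal{A}^*\mathcal{E}(L)-\mathcal{B}^*\mathcal{H}(L)$ has entries in $\mathfrak{A}$, which is precisely global $G$-invariance of $\mathrm{E}_{\mathrm{inv}}(L)$.

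The only subtle point, and the place I expect the main (minor) obstacle, is that the whole construction is rational rather than smooth: the coframe, the operators $\mathcal{A}^*,\mathcal{B}^*$, and $\mathrm{E}_{\mathrm{inv}}(L)$ are a priori defined only on the complement of a proper algebraic subset. One needs to argue that $G$-invariance established on this dense open set extends automatically, which follows because a rational differential form fixed by $G$ on an open dense set is fixed wherever defined, so no separate globalization step is required. This closes the proof.
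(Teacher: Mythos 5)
Your proposal is correct and matches the paper's (one‑sentence) proof in substance: the paper simply asserts that all ingredients of the invariant Euler operator --- the invariant coframe, the Eulerian and Hamiltonian, the operators $\mathcal{A},\mathcal{B}$ and their formal adjoints --- are manifestly covariant and hence $G$-invariant, and your argument is precisely the itemized verification of that assertion. The only quibble is that the hypothesis does not force $L$ to be \emph{rational} in the $I^i_j$ (global invariance only makes it a function of the generating invariants on the regular stratum), but this does not affect the conclusion.
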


The proof follows from the fact that all ingredients of the Euler operator  
(linearization and formal adjoint, see \cite{V}) are manifestly covariant, 
and hence invariant with respect to symmetry.
A few remarks are of order.

\medskip

1.  In the global algebraic setting, the components of the factor $W$ in $\mathrm{E}(LR)$ are rational, but not $G$-invariant in general.  This is illustrated with examples of 
extremals for projective length in \S \ref{subsecproj} and 
conformal geodesics in \S \ref{subsConf}. However considered as a section of a
$G$-equivariant vector bundle, as in \cite{KS2}, it actually becomes invariant. 

2. The method of moving frames can be applied to globally invariant Lagrangians.
Indeed pass from $L\,\varpi$ to its representation via local differential invariants
obtained by invariantization $L'\,\varpi'$. Then $L'=LR$ for the density 
$R=\varpi(D_{\varpi'})$  is a local invariant, and the resulting Euler operator
$\mathrm{E}_{\mathrm{inv}}(L)$ is $\mathfrak{g}$-invariant for 
$\mathfrak{g}=\op{Lie}(G)$. In fact, it is $G$-invariant, though written through 
a basis of local differential invariants.

3. It is convenient to use the more general setup of Theorem \ref{mainThm}
in combination with normalizations of a moving frame to compute the
invariant Euler operator $\mathrm{E}_{\mathrm{inv}}(L)$.
This will be demonstrated in the following computations.

\section{Computations for classical geometries}

In this section we compute the invariant Euler--Lagrange equations for 
several Lie group actions. In $2D$ we consider the  action of 
the M\"obius group $\text{PSL}(2,\C)=\text{SO}(1,3)$
as well as the projective linear group $\text{PSL}(3, \mathbb{R})$. 
We do these computations using both local and global differential invariants. 
In $3D$ we consider the group action of the Euclidean motion group $\text{SE}(3)$ 
as well as the conformal group $\text{Conf}(3)=\text{SO}(1,4)$. 
The general formula for invariant Euler--Lagrange equations for 
$\text{SE}(3)=\text{SO}(3)\ltimes\R^3$ was already computed in 
\cite{anderson}, \cite{griffiths}, \cite{kogan2003invariant}. 
We revisit it and compare the results with the conformally invariant 
Euler--Lagrange equations in the case of conformal geodesics. 
Additionally, we derive a formula for $\text{SE}(3)$ using global invariants. 
Finally, we also compute the invariant Euler--Lagrange equations for the $4D$ 
motion group $\text{SE}(4)$. 
For the computations we used the DifferentialGeometry package in Maple.
See the Maple files, which are supplementary to the arXiv submission.

\subsection{M\"obius Invariant Variational Problems in 2D}

The group $G=\text{SO}(1,3)$ acts (algebraically) on $\mathbb{S}^2$ by conformal transformations. 
It has double cover $\text{Spin}(1,3)\simeq \text{SL}(2,\C)$ with center $\mathbb{Z}_2$,
the quotient by which is $G$. Sometimes this double cover is called the M\"obius group, even though it acts on 
$\C P^1\simeq\mathbb{S}^2$ through $\text{PSL}(2,\C)\simeq  \text{SO}(1,3)$.
The corresponding Lie algebra is $\mathfrak{g}=\mathfrak{so}(1,3)$ 
and, in a chart $\R^2(x,y)$, it has generators 
$\p_x$, $\p_y$, $x\p_y-y\p_x$, $x\p_x+y\p_y$, 
$\frac12(x^2+y^2)\p_x-x(x\p_x+y\p_y)$, $\frac12(x^2+y^2)\p_y-y(x\p_x+y\p_y)$, 
which we denote by $X_1, \dots, X_6$, respectively.

\subsubsection{Generating differential invariants}

It is straightforward to compute that local differential invariants are generated by one 
differential invariant of order 5 and one invariant derivation,
both expressed through the classical curvature $\kappa$ and the natural parameter $ds$ of 
the curve:
 \begin{equation}
q=\frac{4\kappa_s\kappa_{sss}-4\kappa^2\kappa_s^2 -5\kappa_{ss}^2}{\kappa_s^3},\qquad
\sigma=\sqrt{\kappa_s}ds,\quad D_\sigma= \frac1{\sqrt{\kappa_s}}\frac{d}{ds}.
 \end{equation}
In other words, any differential invariant can be described locally as a function of $q$, $q_\sigma$, $q_{\sigma\sigma}$, $q_{3\sigma}=q_{\sigma\sigma\sigma}$, $q_{4\sigma}$ and so on.

\medskip

To obtain global invariants, observe that $q$ is a rational differential invariant. 
However the invariant derivation contains radicals. As an example of a rational invariant derivation, we have
 $$
\bar\nabla=q_\sigma^{-1}D_\sigma,
 $$
which coincides with the Tresse derivative $D_q$. As the Tresse derivative, it has the property $D_q(q)=1$, hence 
cannot generate the algebra of global invariants together with $q$ alone. To compensate for this we have to add the invariant $(q_\sigma)^2$ of order 6.
Thus the field of global M\"obius invariants is generated by
 $$
q,\quad (q_\sigma)^2,\quad \bar\nabla.
 $$
Alternatively one can choose the smaller set of global generators
 \begin{equation}\label{globqq}
q,\quad \nabla := q_\sigma D_\sigma.
 \end{equation}

\subsubsection{M\"obius-invariant Euler--Lagrange equations via local invariants} 
As a cross-section we take
\begin{equation}\label{Kmoeb}
    \Sigma := \{ x = 0, y = 0, y_1 = 0, y_2 = 0, y_3 = 1, y_4 = 0 \}.
\end{equation}
Restricting the M\"obius curvature $q$ to the above cross-section gives $q|_\Sigma = 4 y_5$ so 
that $q$ is obtained as the invariantization of $4y_5$. Moreover, we have that $\sigma|_\Sigma = 
dx$, so we use $\varpi := \iota(dx)$. This ensures that  $\varpi$ 
coincides with $\sigma$ up to a contact form. By application of the algorithm, we obtain 
\begin{equation*}
    \begin{split}
\gamma^1 & = -\varpi,\ \gamma^2 = 0,\ \gamma^3 = 0,\ \gamma^4 = 0,\ \gamma^5 = -\frac18 q \varpi,\ \gamma^6 = -\varpi, \\
\epsilon^1 & = 0,\ \epsilon^2 = -\iota(\theta_0),\ \epsilon^3 = -\iota(\theta_1),\ \epsilon^4 = \frac12 \iota(\theta_3),\ \epsilon^5 = -\frac12 \iota(\theta_4),\ \epsilon^6 = -\iota(\theta_2).
    \end{split}
\end{equation*}
Knowing $\gamma$- and $\epsilon$-components, we compute next the invariant vertical differentials $d_{\mathcal{V}}q$ and $d_{\mathcal{V}}\varpi$. In order to read off $\mathcal{A}, \mathcal{B}$ we then rewrite all invariant contact forms in terms of the action of an invariant differential operator acting on the lowest order invariant contact forms (steps 4-6 in the algorithm). 
The final formula for the invariant Euler--Lagrange equation is given by
\begin{equation}\label{AEBH-Moeb}
    \begin{split}
\mathcal{A}^*\mathcal{E}(L)- \mathcal{B}^*\mathcal{H}(L) 
  = & \Bigl(-4 D_{\sigma}^5+ 2qD_{\sigma}^3+ \tfrac72q_{\sigma}D_{\sigma}^2
+(3q_{\sigma\sigma}-\tfrac14q^2-16)D_{\sigma}+ (q_{3\sigma}-\tfrac38qq_{\sigma})\Bigr)\,\mathcal{E}(L) \\ 
    & + \Bigl(\tfrac12D_{\sigma}^3-\tfrac18qD_{\sigma}-\tfrac1{16}q_{\sigma}\Bigr)\,\mathcal{H}(L),
    \end{split}
\end{equation}
where $D_{\sigma}$ is the M\"obius-invariant derivation (dual to $\varpi$), 
and $\mathcal{E}(L), \mathcal{H}(L)$ are the Eulerian and Hamiltonian of a 
M\"obius-invariant Lagrangian. By application of formulas (\ref{eq:varthetatilde}), (\ref{eq:vartheta0}) and evaluating on the cross-section $\Sigma$, we obtain that the invariantization of $\theta_0 = dy - y_1 dx$ is given by
\begin{equation}
    \iota(\theta_0) = \frac{dq(\partial_{y_5})}{4 D_{\sigma}(x)^5} \cdot \theta_0.
\end{equation}
Using this expression and the definition (\ref{eq:W}) we compute 
\begin{equation}\label{eq:W_Moebius}
    W = \frac{y_1^2 y_3 - 3 y_2^2 y_1 + y_3}{(y_1^2 +1)^3}.
\end{equation}
Solving $W = 0$ for $y_3$ yields the equation for unparametrized circles in $\mathbb{R}^2$, namely
\begin{equation}
    y_3 = \frac{3 y_1 y_2^2}{y_1^2 + 1}.
\end{equation}
We conclude that the singular extremals are precisely all circles
in the plane (including infinite radius: straight lines).

\medskip

We compute some examples of M\"obius-invariant Euler--Lagrange equations 
 (for regular extremals)
by applying formula \eqref{AEBH-Moeb} to the M\"obius-invariant Lagrangians 
$q^n\,\sigma$ for $n=0,1,2,3$. The results are displayed in the following table.

\bigskip

\makegapedcells
\begin{tabular}{||c c c c||} 
 \hline
 Lagrangian $L$ & $\mathcal{E}(L)$ & $\mathcal{H}(L)$ & Invariant Euler--Lagrange Equations \\ [0.5ex] 
 \hline\hline
$\sigma$ & $  0 $ & $ -1 $ &  $q_{\sigma} = 0$  \\ 
 \hline
$q\, \sigma$ & $ 1 $ & $ -q $ &  $q_{3\sigma} = \frac{3 q q_{\sigma}}{8}$  \\ 
 \hline
$q^2\sigma$ & $ 2q $ & $ -q^2 $ & $q_{5\sigma} = \frac58qq_{3\sigma} 
+\frac54 q_{\sigma}q_{\sigma\sigma} -\frac{15}{128}q^2q_{\sigma}-4q_{\sigma}$ \\ 
 \hline
$q^3\sigma$ & $ 3q^2 $ & $-q^3 $ &  $q_{5\sigma} = \frac{P(q,q_{\sigma}, q_{\sigma\sigma},q_{3\sigma},q_{4\sigma})}{384q} $ \hspace{0.5cm}($P$ polynomial)  \\ 
 \hline
\end{tabular}

\medskip

Here the polynomial $P$ is given by the following formula
\begin{equation*}
    \begin{split}
P & = -1920q_{\sigma}q_{4\sigma}-3840q_{\sigma\sigma}q_{3\sigma}+216q^2q_{3\sigma}
+1056qq_{\sigma}q_{\sigma\sigma}+288q_{\sigma}^3-35q^3q_{\sigma}-1536qq_{\sigma}.
    \end{split}
\end{equation*}

\subsubsection{M\"obius-invariant Euler--Lagrange equations via global invariants}

We take generators $q$ and $\nabla=q_{\sigma}D_{\sigma}$ as in \eqref{globqq} and 
let $q_1 :=\nabla q=q_{\sigma}^2$, $q_2 :=\nabla q_1=2q_{\sigma}^2q_{\sigma\sigma}$, 
etc. Let $\Delta:=q_{\sigma}^{-1}\sigma$ be the (horizontal) form dual to $\nabla$. 
We use the cross-section $\Sigma$ from \eqref{Kmoeb}, in which case we have
$\Delta|_\Sigma=\frac1{4y_6}dx$. 
Denote $\varpi_2:=\iota\left(\frac1{4y_6}\,dx\right)=\frac{\varpi}{q_{\sigma}}$. 
Note that
\begin{equation*}
    \begin{split}
        dq & = d_{\mathcal{H}}q + d_{\mathcal{V}}q = D_{\sigma}(q)\, \varpi + d_{\mathcal{V}}q 
         = \nabla(q)\, \varpi_2 + d_{\mathcal{V}}q.
    \end{split}
\end{equation*}
Since $\varpi_2$ is proportional to $\varpi$, the invariant horizontal 
differential and invariant vertical differential are independent of the choice 
of invariant contact forms that we use to complete the coframe. However, 
the expressions for the $\mathcal{A}, \mathcal{B}$ operators do depend on 
this choice. By the previous computation, we have $d_{\mathcal{V}}q$ in terms of the invariantized contact form $\iota(\theta_0)$:
\begin{equation}\label{eq:Aop_iota_Moeb2D}
    \begin{split}
        d_{\mathcal{V}}q & = \mathcal{A}(\iota(\theta_0)) = \left(4 D_{\sigma}^5 - 2 q D_{\sigma}^3 - \frac52 q_{\sigma} D_{\sigma}^2 + \left( \frac{q^2}{4} + 16 - q_{2\sigma}\right) D_{\sigma} + \left(\frac{qq_{\sigma}}{8} - \frac{q_{3\sigma}}{2}\right)  \right)\, \iota(\theta_0)
    \end{split}
\end{equation}
By application of formulas (\ref{eq:varthetatilde}), (\ref{eq:vartheta0}), 
we get a rational invariant contact form of order $0$ by setting
\begin{equation*}
        \vartheta_0  := \frac14\, \frac{dq(\partial_{y_5})}{\nabla(x)^5}  \,\theta_0.
\end{equation*}
Evaluating $\vartheta_0$ on the cross-section $\Sigma$ gives $\frac{\theta_0}{(4 
y_6)^5}$ and so we obtain that $\vartheta_0=\frac{\iota(\theta_0)}{q_{\sigma}^5}$. 
In expression (\ref{eq:Aop_iota_Moeb2D}) we now substitute 
$\iota(\theta_0) = q_{\sigma}^5 \vartheta_0 = (q_1)^{\frac52} \vartheta_0$ 
and $D_{\sigma}=\frac{\nabla}{\sqrt{q_1}}$. 
Upon repeatedly applying the product rule and collecting terms, 
we obtain $d_{\mathcal{V}}q$ expressed through a $\nabla$-differential 
operator $\mathcal{A}(\vartheta_0)$ acting on $\vartheta_0$. 
The formal adjoint is then readily computed as
\begin{equation*}
    \begin{split}
\mathcal{A}^* & = -4 \nabla^5 +\frac{30 q_2}{q_1} \nabla^4 +\left(2 q q_1-\frac{125 q_2^{2}}{q_1^{2}}+\frac{40 q_3}{q_1}\right) \nabla^{3} 
+\left(\frac{7 q_1^{2}}{2}-\frac{255 q_2 q_3}{q_1^{2}}+\frac{675 q_2^{3}}{2 q_1^{3}}-6 q q_2+\frac{30 q_4}{q_1}\right) \nabla^2 \\ &  + \left(-16 q_1^{2} +\frac{1403 q_2^{2} q_3}{2 q_1^{3}}-\frac{129 q_2 q_4}{q_1^{2}}-\frac{565 q_2^{4}}{q_1^{4}}+\frac{19 q q_2^{2}}{2 q_1}-\frac{88 q_3^{2}}{q_1^{2}}+\frac{12 q_5}{q_1}-\frac{q_1^{2} q^{2}}{4}-4 q q_3-\frac{15 q_2 q_1}{4}\right) \nabla +\alpha,
    \end{split}
\end{equation*}
where 
\begin{equation}\label{MoebAlpha}
    \begin{split}
\alpha= & \frac{2q_6}{q_1}
-\frac{26q_2q_5}{q_1^2}-\frac{45q_3q_4}{q_1^2}+\frac{359q_2^2q_4}{2q_1^3}-qq_4
+\frac{493q_2q_3^2}{2q_1^3}-\frac{797q_2^3q_3}{q_1^4}+\frac{13qq_2q_3}{2q_1}\\ 
& -\frac{3q_1q_3}{2}+\frac{455q_2^5}{q_1^5}- \frac{7qq_2^3}{q_1^2}
+\frac{9q_2^2}{4}+\frac{q^2q_1q_2}{8}+8q_1q_2-\frac{3qq_1^3}{8}.
    \end{split}
\end{equation}

Next, by application of the Leibniz rule we have that 
\begin{equation*}
    \begin{split}
d_{\mathcal{V}}\varpi_2 & = d_{\mathcal{V}}\left(\frac{\varpi}{q_{\sigma}}\right) 
= \frac{1}{q_{\sigma}}d_{\mathcal{V}}\varpi+ d_{\mathcal{V}}(q_{\sigma}^{-1}) \wedge \varpi \\
& = \frac{1}{q_{\sigma}}\mathcal{B}(\iota(\theta_0))\wedge\varpi 
-\frac{1}{q_{\sigma}^2}d_{\mathcal{V}}(q_{\sigma})\wedge\varpi 
= \left(\mathcal{B}(\iota(\theta_0))-\frac{d_{\mathcal{V}}(q_{\sigma})}{q_{\sigma}}  \right)\wedge\varpi_2,
    \end{split}
\end{equation*}
which implies that
 \begin{equation}\label{eqn_Moeb2D_Bequality}
\mathcal{B}(\vartheta_0) = \mathcal{B}(\iota(\theta_0))
-\frac{d_{\mathcal{V}}(q_{\sigma})}{q_{\sigma}}. 
 \end{equation}
By application of formula (\ref{eqn_inv_der_on_functions}) and using that 
$q_{\sigma} = \iota(4y_6)$, we compute that
\begin{equation*}
    \begin{split}
d_{\mathcal{V}}(q_{\sigma}) & = 4 \iota(d_Vy_6) + \sum_{k=1}^6 \iota(X_k(4 y_6))\, \epsilon^k  = 4 \iota(\theta_6) - \frac92 q\, \iota(\theta_4) - \frac52 q_{\sigma} \iota(\theta_3) - 200\, \iota(\theta_2) \\
& = \left[4 D_{\sigma}^6 -2 q D_{\sigma}^4 +\left(\frac{1}{2}-5 q_{\sigma}\right) D_{\sigma}^3  + \left(-\frac{9 q_{2\sigma}}{2}+16+\frac{q^{2}}{4}\right) D_{\sigma}^2 \right. \\ & \left. + \left(-\frac{1}{8} q+\frac{3}{4} q q_{\sigma}-\frac{5}{2} q_{3\sigma}\right) D_{\sigma} + \left(-\frac{1}{16} q_{\sigma}+\frac{3}{16} q_{\sigma}^{2}+\frac{1}{8} q q_{2\sigma}-\frac{1}{2} q_{4\sigma}\right) \right] \iota(\theta_0).
    \end{split}
\end{equation*}
We proceed as before, we substitute 
$\iota(\theta_0) = (q_1)^{\frac52} \vartheta_0$ and 
$D_{\sigma}  = \frac{\nabla}{\sqrt{q_1}}$ into (\ref{eqn_Moeb2D_Bequality}) 
and rewrite to obtain the expression for the $\nabla$-differential operator 
$\mathcal{B}(\vartheta_0)$. Then we compute its formal adjoint to be equal to
\begin{equation*}
    \begin{split}
\mathcal{B}^{*} & = -\frac{4}{q_1}\nabla^6 +\frac{54q_2}{q_1^2}\nabla^5 
+\left(\frac{100q_3}{q_1^2}+2q-\frac{395q_2^2}{q_1^3}\right)\nabla^4
+\left(-\frac{14q q_2}{q_1}-\frac{1195q_2 q_3}{q_1^3}+\frac{5q_1}{2}+\frac{110q_4}{q_1^2}
    +\frac{3835q_2^3}{2q_1^4}\right)\nabla^3 \\ &
+\left(-\frac{16q q_3}{q_1}+\frac{103q q_2^2}{2 q_1^2}-16 q_1-\frac{q_1 q^2}{4}-\frac{45 q_2}{4}
    -\frac{12635 q_2^4}{2 q_1^5}-\frac{688 q_3^2}{q_1^3}+\frac{13313 q_2^2 q_3}{2 q_1^4}
    -\frac{999 q_2 q_4}{q_1^3}+\frac{72 q_5}{q_1^2}\right)\nabla^2 \\ & 
+\left(-\frac{9 q q_4}{q_1}-\frac{110 q q_2^3}{q_1^3}+\frac{5q^2 q_2}{8}-\frac{q_1^2 q}{8}
    +\frac{26 q_2^2}{q_1}+\frac{13090 q_2^5}{q_1^6}+\frac{26 q_6}{q_1^2}-\frac{39 q_3}{4}
    +\frac{161 q q_2 q_3}{2 q_1^2} \right. \\ & \left. 
\hspace{25pt}  -\frac{39725 q_2^3 q_3}{2q_1^5}
    +\frac{7515 q_2^2 q_4}{2 q_1^4}+\frac{10415 q_2 q_3^2}{2q_1^4}-\frac{440 q_2 q_5}{q_1^3}
    -\frac{775 q_3 q_4}{q_1^3}+40 q_2\right) \nabla +\beta,
    \end{split}
\end{equation*}
where
\begin{equation}\label{MoebBeta}
    \begin{split}
\beta= & \frac{91 q_2 q_3}{4 q_1}+\frac{q^2 q_3}{4}+16 q_3-3 q_4+\frac{835 q_2^2 q_5}{q_1^4}
    -\frac{172 q_3 q_5}{q_1^3}-\frac{5 q^2 q_2^2}{8 q_1}+\frac{q_1 q q_2}{4}
    +\frac{23 q q_2 q_4}{q_1^2}\\ & 
    -\frac{132 q q_2^2 q_3}{q_1^3}-\frac{2 q q_5}{q_1}+\frac{16 q q_3^2}{q_1^2}
    -\frac{5675 q_2^3 q_4}{q_1^5}-\frac{11864 q_2^2 q_3^2}{q_1^5}-\frac{80 q_2 q_6}{q_1^3}+\frac{26180 q_2^4 q_3}{q_1^6}-\frac{40 q_2^2}{q_1} \\ 
& -\frac{109 q_2^3}{4 q_1^2} -\frac{13090 q_2^6}{q_1^7}
+\frac{4q_7}{q_1^2}
    +\frac{688 q_3^3}{q_1^4}-\frac{110 q_4^2}{q_1^3}+\frac{110 q q_2^4}{q_1^4}
    +\frac{q_1^3}{8}+\frac{2969 q_2 q_3 q_4}{q_1^4}.
    \end{split}
\end{equation}

The invariant Euler--Lagrange equations for an invariant Lagrangian 
$\lambda = L\,\Delta$ are then computed by the standard formula
$\mathcal{A}^{*}\mathcal{E}(L) - \mathcal{B}^{*}\mathcal{H}(L)= 0$. The matrix relative 
invariant $W$ determined by the global invariant forms $(\vartheta_0, \varpi_2)$ is 
proportional to (\ref{eq:W_Moebius}) with proportionality factor $q_{\sigma}^{-6}$.

For $L=1$, i.e.\ $\lambda=q_{\sigma}^{-1}\,\sigma$, 
the invariant Euler--Lagrange equations are given by $\beta=0$.
For $L=q$ the invariant Euler--Lagrange equations are given by 
 $\alpha=-\mathcal{B}^{*}(q)$. Here $\alpha$ and $\beta$ are given by formulas
\eqref{MoebAlpha} and \eqref{MoebBeta}, respectively.
For the Lagrangian $L=q_1$, i.e.\ 
$\lambda=q_1\Delta=d_{H}(q)$, the Euler--Lagrange equations 
are trivial, as expected for a divergence term.

\subsection{Projectively Invariant Variational Problems in $2D$}\label{subsecproj}

The projective linear group $PSL(3,\R)$ acts on the projective plane $\R P^2$. 
The action is given by the formula
 \begin{equation}
A\cdot[1:x:y]=[a_{00}+a_{01}x+a_{02}y:
a_{10}+a_{11}x+a_{12}y:a_{20}+a_{21}x+a_{22}y]. 
 \end{equation}
Thus, $A\in PSL(3)$ defines a linear fractional transformation 
$\tilde x = \frac{a_{10}+a_{11}x+a_{12}y}{a_{00}+a_{01} x+a_{02}y}$, 
$\tilde y = \frac{a_{20}+a_{21}x+a_{22}y}{a_{00}+a_{01} x+a_{02}y}$. 
The corresponding Lie algebra $\mathfrak{g}=\mathfrak{sl}(3,\R)$ 
in the open chart $\R^2(x,y)$ is generated by the vector fields 
$\p_x$, $\p_y$, $x\p_x$, $x\p_y$, $y\p_x$, $y\p_y$, $x(x\p_x+y\p_y)$, $y(x\p_x+y\p_y)$, which we enumerate as $X_1,\dots,X_8$.

\subsubsection{Generators of differential invariants}\label{StrLnsCon}

The local projective invariants are well-known, see \cite{Wilczynski06} and also
\cite{KL}. Here we correct an error in the formula for $\Theta_8$ in 
the latter reference. Define
 $$
\Theta_3=\frac{9y_2^2y_5-45y_2y_3y_4+40y_3^3}{y_2^3},\qquad
\Theta_8=6\Theta_3\frac{d^2\Theta_3}{dx^2}-7\Bigl(\frac{d\Theta_3}{dx}\Bigr)^2
+27\Theta_3^2R_2^{1/3}\frac{d^2}{dx^2}\Bigl(R_2^{-1/3}\Bigr).
 $$
Here $R_2=y_2$ is a relative differential invariant, and we have two other polynomial relative invariants
$R_5=R_2^3\Theta_3$ and $R_7=R_2^8\Theta_8$. 
These relative invariants have weights $(2,1)$, $(9,3)$ and $(24,8)$,
respectively, with respect to the basis of the Cartan subalgebra of 
$\mathfrak{sl}(3,\R)$ corresponding to vector
fields $(-x\p_x,y\p_y)$. This implies that the relative invariants $\frac{R_5^3}{R_2R_7}$ and $\frac{R_2^3R_7^2}{R_5^6}$ have
weights, respectively, $(1,0)$ and $(0,1)$ and thus generate the weight lattice. 
Above we use the convention that indices of $R_i$ correspond to the order 
of the invariant, while indices of $\Theta_j$ corresponds to the second weight 
(the space of weights of relative differential invariants for $\mathfrak{g}$ is two-dimensional, see \cite{KS2}).

The projective differential invariants are generated by one differential invariant (projective curvature) and one invariant derivation (projective parameter):
 \begin{gather}
\kappa= \frac{4\Theta_8}{9 \sqrt[3]{\Theta_3}^8},\qquad \nabla=\frac{1}{ \sqrt[3]{\Theta_3}}  \frac{d}{dx}.
 \end{gather}
Thus we get one new differential invariant in every order starting from 7: $\kappa_i := \nabla^i(\kappa), i \geq 1$. Furthermore, choosing 
\[\varpi=\sqrt[3]{\Theta_3} \left(dx- \left(\frac{D_x(\Theta_3)}{3R_2\Theta_3}+\frac{2}{3} D_x(1/R_2)\right)\theta_0 \right) = \iota(3^{2/3} dx)\] ensures that $\varpi(\nabla)=1$.  

Notice that the differential equation $R_2=0$ describes the straight lines, while $R_5=0$ describes conics \cite{Halphen78}. For an overview of some of the distinguished projectively invariant families of curves, we refer to \cite{Konovenko}.   

\subsubsection{Projectively-invariant Euler--Lagrange equations via local invariants} 

In principle, we could compute the Euler--Lagrange operator by computing $d_{\mathcal{V}}\kappa$ and $d_{\mathcal{V}} \varpi$ and writing them in terms of ${\nabla}^k(\vartheta_0)$ and ${\nabla}^k(\kappa)$ by eliminating jet-variables. However, due to the size of the formulas, this is not practical even with the use of computer algebra systems. Therefore, we will instead do these computations on a cross-section, with an approach that differs slightly from the algorithm in Section \ref{sect:movingframe}.

We use the transversal cross-section defined by 
 \begin{equation}\label{Proj-cross}
\Sigma=\{x=0,\ y_0=0,\ y_1=0,\ y_2=1,\ y_3=0,\ y_4=0,\ y_5=1,\ y_6=0\}. 
 \end{equation}
Restricting the projective curvature $\kappa$ to the cross-section gives 
$\frac{8}{3^{7/3}}\,y_7$, in other words, $\kappa=\iota\left(\frac{8}{3^{7/3}}\, y_7 \right)$. 

The cross-section gives the invariantizations of the jet coordinates up to 
order 6 and using $\iota(y_7) = \frac{3^{7/3}\kappa}{8}$, we can 
compute all the $\gamma$  terms. Then using the formulas for 
invariant differentials, we compute all the other ingredients. We obtain the $\gamma$-terms as explained in step 1 and 2 of the algorithm in Section \ref{sect:movingframe}:
\begin{gather*}
    \gamma^1 = -\iota(dx),\qquad \gamma^2 = 0, \qquad \gamma^3 = 0, \qquad\gamma^4 = -\iota(dx), \\\gamma^5 = -\frac{3^{4/3} \kappa\, \iota(dx)}{8}, \qquad\gamma^6 = 0,\qquad \gamma^7 = \frac{3^{4/3}\kappa\, \iota(dx)}{8},\qquad \gamma^8 = \frac{ \iota(dx)}{6}.
\end{gather*}
Here we will not follow the algorithm completely. Instead of using the invariant contact forms $\iota(\theta_i)$ as basis elements, we will use $\nabla^i(\vartheta_0)$. This lets us skip the computation of the $\epsilon$-terms. For $\vartheta_0$ we choose 
 \[ 
\vartheta_0 = \frac{\Theta_8}{I_0 R_2 \Theta_3^2} \theta_0 = \iota\left(\frac{3^{10/3}}{4} \theta_0\right). 
 \] 

Consider a general invariant contact form $\vartheta$, and let $\theta$ be a contact form satisfying $\vartheta = \iota(\theta)$. Note that there is some freedom in choosing $\theta$, the only requirements being that $\theta$ is a contact form and $\vartheta|_{\Sigma} = \theta|_{\Sigma}$. We will choose $\theta$ as follows. Let $\varrho \colon J^\infty \to \Sigma$ be the projection defined by 
\[ \varrho \colon (x,y_0,y_1,y_2,y_3,y_4,y_5,y_6,y_7,\dots, y_k,\dots) \mapsto (0,0,0,1,0,0,1,0,y_7,\dots, y_k, \dots),\]
 and define $\theta = \sum_{i \geq 0} \varrho^*(\vartheta(\partial_{y_i})|_\Sigma) \theta_i$, which has the benefit of a much simpler algebraic expression than $\vartheta$. 
 
 Using  \eqref{def_inv_hor_der} together with the identities $d_H(\theta) = dx \wedge D_x(\theta)$ and $d_{\mathcal{H}}(\vartheta) = \varpi \wedge \nabla (\vartheta)$ gives 
\[\varpi \wedge \nabla(\vartheta) = \iota(dx) \wedge \iota (D_x(\theta)) + \sum_k \gamma^k \wedge \iota(X_k(\theta)),  \]
or, if we evaluate on the cross-section: 
\[ \left(\varpi \wedge \nabla(\vartheta) \right)|_{\Sigma} =\left( dx \wedge D_x(\theta) + \sum_k \gamma^k \wedge X_k(\theta)\right)\Bigg|_{\Sigma}.\]
Contracting the 2-form with $\nabla$ (at the points of ${\Sigma}$) results in 
\begin{equation} \label{eq:OnCrossection}
    \nabla(\vartheta)|_{\Sigma} = \nabla(\theta)|_{\Sigma} + (\nabla|_{\Sigma}) \lrcorner \left( \sum_k \gamma^k \wedge X_k(\theta) \right)\Bigg|_{\Sigma}
\end{equation}
and we thus get the expression for $\nabla(\vartheta)|_{\Sigma}$ from the expression $\vartheta|_{\Sigma}$. This lets us bypass the brute-force computation of $\nabla(\vartheta)$, which becomes impractical when we need to compute $\nabla^i(\vartheta_0)$ for $i$ up to 7. Instead, we use \eqref{eq:OnCrossection} to compute $\nabla^i(\vartheta_0)|_{\Sigma}$.  This is sufficient since the invariant objects are completely determined by their behaviour on the cross-section. 

Next, we compute $d_{\mathcal{V}} \kappa |_{\Sigma}$ and $ d_{\mathcal{V}}\varpi|_{\Sigma}$ and rewrite them in terms of $\nabla^i(\vartheta_0)|_\Sigma$. After substituting the remaining free jet variables $y_7,y_8, \dots$,  for the appropriate differential invariants $\kappa|_\Sigma, \kappa_1|_{\Sigma}, \dots$, we get the following expressions:

\begin{equation}
    \begin{split}
        d_{\mathcal{V}}\kappa &=\Bigg[\frac{32}{3} \nabla^7-16 \kappa \nabla^5 -\frac{140}{3} \kappa_1 \nabla^4-\left(\frac{196}{3} \kappa_2-6 \kappa^2\right) \nabla^3-\frac{1}{3} (140 \kappa_3-55 \kappa \kappa_1) \nabla^2 \\&-\frac{1}{3} \left(52 \kappa_4-41 \kappa \kappa_2-32 \kappa_1^2-\frac{2}{3} \kappa_1+2\kappa^3-\frac{8}{27}\right) \nabla -\frac{1}{3} ( 8 \kappa_5-10 \kappa \kappa_3-2(12\kappa_2-\kappa^2)\kappa_1)\Bigg]\left(\vartheta_0 \right)  \\
        d_{\mathcal{V}}\varpi 
        & = \left[\frac{4}{3} \nabla^5-\frac{5}{3} \kappa \nabla^3-\left(\frac{5}{2} \kappa_1-\frac{2}{9}\right) \nabla^2 -\left( \frac{3}{2} \kappa_2-\frac{1}{3}\kappa^2\right)\nabla -\frac{1}{3} (\kappa_3-\kappa \kappa_1) \right]\left(\vartheta_0\right) \wedge \varpi.
    \end{split}
\end{equation}
By using the definition of the formal adjoint, we compute that 
\begin{equation}
    \begin{split}
\mathcal{A}^{*} & = \Bigg[ -\frac{32}{3} \nabla^7+16 \kappa \nabla^5 +\frac{100}{3} \kappa_1 \nabla^4+\left(\frac{116}{3} \kappa_2-6 \kappa^2 \right)\nabla^3 +\left(\frac{88}{3} \kappa_3-\frac{53}{3} \kappa \kappa_1\right) \nabla^2 \\
&+ \left(\frac{40}{3} \kappa_4-13 \kappa \kappa_2-10 \kappa_1^2-\frac{2}{9} \kappa_1+\frac{2}{3} \kappa^3-\frac{8}{81}\right) \nabla 
+\left(\frac{8}{3} \kappa_5-4 \kappa \kappa_3-8 \kappa_1 \kappa_2-\frac{2}{9} \kappa_2+\frac{4}{3} \kappa^2 \kappa_1 \right)\Bigg]
    \end{split}
\end{equation}
and
\begin{equation}
    \begin{split}
        \mathcal{B}^{*} & = \left[-\frac{4}{3}\nabla^5+\frac{5}{3} \kappa \nabla^3+\left(\frac{5}{2} \kappa_1+\frac{2}{9}\right) \nabla^2+\left(\frac{3}{2} \kappa_2 - \frac{1}{3} \kappa^2\right)\nabla +\frac{1}{3} (\kappa_3-\kappa \kappa_1)\right].   
    \end{split}
\end{equation}
From the formula $\vartheta_0 \wedge \varpi = W \theta_0 \wedge dx$, 
we compute 
\begin{equation}\label{eq:W_Proj2D}
    W =  \frac{9 \Theta_3}{4 y_2}.
\end{equation}

The following table shows the Euler--Lagrange equations corresponding to a few simple Lagrangians. 

\medskip

\makegapedcells
\begin{tabular}{||c c||} 
 \hline
 Lagrangian  & Invariant Euler--Lagrange Equations \\ [0.5ex] 
 \hline\hline
$\varpi$ & $ \kappa_3 =  \kappa \kappa_1 $ \\ 
 \hline
$\kappa \varpi$ &  $ \kappa_5 = \frac{3}{2} \kappa \kappa_3+3 \kappa_1\kappa_2-\frac{1}{2} \kappa^2 \kappa_1  $\\
 \hline
$\kappa^2 \varpi$ &  $ \kappa_7 =\frac{13}{8} \kappa \kappa_5+\frac{15}{4} \kappa_1 \kappa_4+\frac{41}{8} \kappa_2 \kappa_3-\frac{49}{64} \kappa^2 \kappa_3 -\frac{89}{32} \kappa \kappa_1 \kappa_2-\frac{45}{64} \kappa_1^3+\frac{9}{64} \kappa^3 \kappa_1-\frac{1}{108} \kappa_1  $ \\
 \hline
$\kappa_1^2 \varpi$  & 
 \hspace{-40pt}
$ \kappa_9 =  \frac{3}{2} \kappa \kappa_7+ 3 \kappa_1 \kappa_6  + \left(\frac{13}{4} \kappa_2- \frac{9}{16} \kappa^2\right)  \kappa_5 + \left(\frac{11}{4} \kappa_3- \frac{3}{2}\kappa \kappa_1\right)\kappa_4$ \\ 
 & \hspace{20pt}
$ +\left(\frac{1}{16}\kappa^3- \frac{11}{16}\kappa_1^2 -\frac{9}{8}\kappa \kappa_2- \frac{1}{108} \right) \kappa_3 -\frac{3}{8} \kappa_1 \kappa_2^2  + \frac{3}{32}  \kappa^2 \kappa_1 \kappa_2   - \frac{1}{64} \kappa \kappa_1^3 $\\
 \hline
\end{tabular}

\medskip

In these equations, we have omitted the overall (relative invariant) factor $W$. As a consequence, the formulas above only cover the regular extremals. The curves defined by $\Theta_3=0$ are singular extremals (conics and straight lines in
the limit, cf.\ \S\ref{StrLnsCon}) where $\varpi$ vanishes or is infinite. 
The Euler--Lagrange equation corresponding to $\varpi$ was also computed in \cite{Konovenko}. The differences in our formula compared to theirs is explained by the fact that we have chosen a different set of generators for the differential invariants and that we have omitted the factor $W$ from the equation.

\subsubsection{Projectively invariant Euler--Lagrange equations via global invariants} Let us now compute the Euler--Lagrange operator in terms of rational differential invariants. 

The field of rational differential invariants is generated by 
\[K_0 =\frac{\Theta_8^3}{3\Theta_3^8}, \qquad \bar{\nabla}= \frac{3 \Theta_3^5}{\Theta_8^2} D_x. \] Notice that $K_0$ is proportional to $\kappa^3$. In order to express the Euler--Lagrange operator in terms of these, we use the global invariant 1-forms 
 \[ 
\varpi = \frac{\Theta_8^2}{3 \Theta_3^5} \left( dx - \left(\frac{D_x(\Theta_3)}{3 R_2 \Theta_3}+\frac{2D_x(1/R_2) }{3} \right) \theta_0\right), \qquad \vartheta_0 = \frac{3 \Theta_3^6}{R_2 \Theta_8^2} \theta_0, 
 \]
which gives the scaling factor $W= \frac{\Theta_3}{3 y_2}$. 
We use a similar approach as above, with the same cross-section given by \eqref{Proj-cross}. 
The computation is straight-forward 
and we obtain the expressions

\begin{align*}
\mathcal{A}^*  &= 
-2K_0^6\bar{\nabla}^7-\tfrac{140}3 K_0^5K_1\bar{\nabla}^6
+\left(-\tfrac{308}3 K_0^5K_2 +4K_0^5 -\tfrac{3164}9 K_0^4K_1^2\right) \bar{\nabla}^5 
+\big(-140K_0^5K_3 -\tfrac{11060}9K_0^4K_1K_2 \\ & 
 +\tfrac{505}{9}K_0^4K_1 -\tfrac{27440}{27}K_0^3K_1^3\big) \bar{\nabla}^4 
+ \big(-\tfrac{364}{3}K_0^5K_4 -\tfrac{3920}{3}K_0^4K_1K_3 
-\tfrac{7420}{9}K_0^4K_2^2 +\tfrac{749}{9}K_0^4K_2 -2K_0^4 \\ &
-\tfrac{35840}{9}K_0^3K_1^2K_2 +\tfrac{5980}{27}K_0^3K_1^2
-\tfrac{84560}{81}K_1^4K_0^2 \big) \bar{\nabla}^3 
+\big( -\tfrac{196}{3}K_0^5K_5 -\tfrac{7532}{9}K_0^4K_1K_4
-\tfrac{11480}{9}K_0^4K_2K_3 \\ &
+\tfrac{622}{9}K_0^4K_3 -\tfrac{82880}{27}K_0^3K_1^2K_3
-\tfrac{33040}{9}K_0^3K_1K_2^2 +\tfrac{12526}{27}K_0^3K_1K_2
-\tfrac{377}{27}K_0^3K_1 -\tfrac{297920}{81}K_1^3K_0^2K_2 \\ &
+\tfrac{21500}{81}K_1^3K_0^2 -\tfrac{66080}{243}K_0K_1^5 \big) \bar{\nabla}^2 
+ \big( -20K_0^5K_6-\tfrac{2680}{9}K_0^4K_1K_5 -\tfrac{4796}{9}K_0^4K_2K_4
-320K_0^4K_3^2 \\ &
+\tfrac{274}{9}K_0^4K_4 -\tfrac{34768}{27}K_0^3K_1^2K_4 
-\tfrac{97520}{27}K_0^3K_1K_2K_3 -\tfrac{18640}{27}K_0^3K_2^3
+\tfrac{6730}{27}K_0^3K_1K_3 +\tfrac{1382}{9}K_0^3K_2^2 \\ &
-\tfrac{97}{9}K_0^3K_2 +\tfrac{8}{27}K_0^3 -\tfrac{146240}{81}K_0^2K_1^3K_3
-\tfrac{79160}{27}K_0^2K_1^2K_2^2 +\tfrac{40828}{81}K_0^2K_1^2K_2
-\tfrac{1676}{81}K_0^2K_1^2 -\tfrac{1}{6}K_0^2K_1 \\ &
-\tfrac{3}{2}K_0^2 -\tfrac{160160}{243}K_0K_1^4K_2 +\tfrac{16880}{243}K_0K_1^4
-\tfrac{1120}{243}K_1^6\big) \bar{\nabla} + \alpha,
\end{align*}
where
 \begin{align*}
\alpha &= 
-\tfrac{8}{3}K_0^5K_7 -\tfrac{136}{3}K_0^4K_1K_6 -\tfrac{280}{3}K_0^4K_2K_5
-\tfrac{6128}{27}K_0^3K_1^2K_5 +\tfrac{50}{9}K_0^4K_5 
-\tfrac{392}{3}K_0^4K_3K_4 -\tfrac{19856}{27}K_0^3K_1K_2K_4 \\ &
-\tfrac{3344}{9}K_0^2K_1^3K_4 +\tfrac{1460}{27}K_0^3K_1K_4
-\tfrac{1280}{3} K_0^3K_1K_3^2 -\tfrac{12880}{27}K_0^3K_2^2K_3
-\tfrac{36320}{27}K_0^2K_1^2K_2K_3 +\tfrac{2144}{27}K_0^3K_2K_3 \\ &
-\tfrac{37280}{243}K_0K_1^4K_3 +\tfrac{10504}{81}K_0^2K_1^2K_3
-\tfrac{28}{9}K_0^3K_3 -\tfrac{35920}{81}K_0^2K_1K_2^3
-\tfrac{64000}{243}K_0K_1^3K_2^2 +\tfrac{436}{3}K_0^2K_1K_2^2 
-\tfrac{1}{6}K_0^2K_2 \\ &
+\tfrac{17584}{243}K_0K_1^3K_2 -\tfrac{1120}{243}K_1^5K_2
-\tfrac{1208}{81}K_0^2K_1K_2 +\tfrac{200}{243}K_1^5 -\tfrac{44}{9}K_0K_1^3 
-\tfrac{2}{9}K_0K_1^2 +\tfrac{16}{27}K_0^2K_1 -2K_0K_1,
 \end{align*}
and
 \begin{align*}
\mathcal{B}^* &= 
-\tfrac{4}{3}K_0^5\bar{\nabla}^7 -\tfrac{196}{9}K_0^4K_1\bar{\nabla}^6 
+\big( -\tfrac{364}{9}K_0^4K_2 +\tfrac{23}{9}K_0^4 
-\tfrac{2800}{27}K_0^3K_1^2\big) \bar{\nabla}^5 
+\big(-\tfrac{140}{3}K_0^4K_3 -\tfrac{7840}{27}K_0^3K_1K_2 \\ &
+\tfrac{620}{27}K_0^3K_1 -\tfrac{12880}{81}K_1^3K_0^2\big) \bar{\nabla}^4 
+ \big(-\tfrac{308}{9}K_0^4K_4 -\tfrac{2240}{9}K_0^3K_1K_3
-\tfrac{3920}{27}K_0^3K_2^2 +\tfrac{82}{3}K_0^3K_2 -\tfrac{31}{27}K_0^3 \\ &
-\tfrac{12320}{27}K_0^2K_1^2K_2 +\tfrac{3940}{81}K_0^2K_1^2
-\tfrac{14560}{243}K_0K_1^4\big) \bar{\nabla}^3 
+ \big( -\tfrac{140}{9}K_0^4K_5 -\tfrac{3472}{27}K_0^3K_1K_4
-\tfrac{4480}{27}K_0^3K_2K_3 \\ &
+\tfrac{494}{27}K_0^3K_3 -\tfrac{21280}{81}K_0^2K_1^2K_3
-\tfrac{7280}{27}K_0^2K_1K_2^2 +\tfrac{5960}{81}K_0^2K_1K_2
-\tfrac{725}{162}K_0^2K_1 +\tfrac{1}{6}K_0^2 -\tfrac{31360}{243}K_0K_1^3K_2 \\ &
+\tfrac{5200}{243}K_0 K_1^3 -\tfrac{1120}{729}K_1^5\big) \bar{\nabla}^2 
+ \big(-4K_0^4K_6 -\tfrac{992}{27}K_0^3K_1K_5 -\tfrac{1360}{27}K_0^3K_2K_4
-\tfrac{80}{3}K_0^3K_3^2 +\tfrac{176}{27}K_0^3K_4 +\tfrac{40}{81}K_1^4 \\ &
-\tfrac{6704}{81}K_0^2K_1^2K_4 -\tfrac{13600}{81}K_0^2K_1K_2K_3
-\tfrac{2000}{81}K_0^2K_2^3 +\tfrac{2336}{81}K_0^2K_1K_3 
+\tfrac{1190}{81}K_0^2K_2^2 -\tfrac{427}{162}K_0^2K_2 +\tfrac{4}{27}K_0^2 \\ &
-\tfrac{10240}{243}K_0K_1^3K_3 -\tfrac{3760}{81}K_0K_1^2K_2^2 
+\tfrac{208}{9}K_0K_1^2K_2 -\tfrac{209}{81}K_0K_1^2 +\tfrac{2}{9}K_0K_1
-K_0 -\tfrac{1120}{729}K_1^4K_2  \big) \bar{\nabla} +\beta,
 \end{align*}
where
 \begin{align*}
\beta &= 
-\tfrac{4}{9}K_0^4K_7 -\tfrac{40}{9}K_0^3K_1K_6 -\tfrac{56}{9}K_0^3K_2K_5
-\tfrac{880}{81}K_0^2K_1^2K_5 +\tfrac{26}{27}K_0^3K_5 \\ &
-\tfrac{56}{9}K_0^3K_3K_4 -\tfrac{1696}{81}K_0^2K_1K_2K_4 
-\tfrac{160}{27}K_0K_1^3K_4 +\tfrac{124}{27}K_0^2K_1K_4 
-\tfrac{80}{9}K_0^2K_1K_3^2 -\tfrac{560}{81}K_0^2K_2^2K_3 \\ &
-\tfrac{800}{81}K_0K_1^2K_2K_3 +\tfrac{38}{9}K_0^2K_2K_3 -\tfrac{160}{729}K_1^4K_3 
+\tfrac{880}{243}K_0K_1^2K_3 -\tfrac{49}{81}K_0^2K_3 \\ &
-\tfrac{320}{243}K_0K_1K_2^3 +\tfrac{544}{243}K_0K_1K_2^2 -\tfrac{80}{729}K_1^3K_2^2
+\tfrac{40}{243}K_1^3K_2 -\tfrac{92}{81}K_0K_1K_2 -\tfrac{5}{81}K_1^3 
+\tfrac{4}{27}K_0K_1 -\tfrac{1}{3}K_1.
 \end{align*}

We notice that the formulas are more complex than with the previous local invariants. The point here is to show that the computations can be done with respect to any generating set of differential invariants, and any basis of invariant contact forms, without much additional effort. 

As an example of a Lagrangian, consider the global Lagrangian $\lambda = \varpi$.
This corresponds to $L=1$, in which case the Euler--Lagrange equations are
given by $\beta=0$. More explicitly, the regular extremals must satisfy
the following equation: 
\begin{align*}
    K_7 &= -\frac{1}{324 K_0^4} \big(3240 K_0^3 K_1 K_6+4536 K_0^3 K_2 K_5+4536 K_0^3 K_3 K_4+7920 K_0^2 K_1^2 K_5+15264 K_0^2 K_1 K_2 K_4\\&+6480 K_0^2 K_1 K_3^2+5040 K_0^2 K_2^2 K_3+4320 K_0 K_1^3 K_4+7200 K_0 K_1^2 K_2 K_3+960 K_0 K_1 K_2^3+160 K_1^4 K_3\\&+80 K_1^3 K_2^2-702 K_0^3 K_5-3348 K_0^2 K_1 K_4-3078 K_0^2 K_2 K_3-2640 K_0 K_1^2 K_3-1632 K_0 K_1 K_2^2-120 K_1^3 K_2\\&+441 K_0^2 K_3+828 K_0 K_1 K_2+45 K_1^3-108 K_0 K_1+243 K_1 \big).
\end{align*}

Similarly, for $L=q$ the invariant Euler--Lagrange equations are given by 
 $\alpha=-\mathcal{B}^{*}(q)$.

\subsection{Euclidean Invariant Variational Problems in $3D$}

The manifold is $\R^3(x,y,z)$ and we consider the Lie group action of $\text{SE}(3)$ on $
\mathbb{R}^3$. The corresponding Lie algebra $\mathfrak{se}(3)$ has generators $\partial_x, \partial_y, \partial_z,x \partial_y - y \partial_x,  x \partial_z - z \partial_x,  y \partial_z - z \partial_y$, which we denote by $X_1, \dots, X_6$, respectively.

\subsubsection{Generating differential invariants}
In this case the algebra of local differential invariants is generated by
 $$
\kappa=\frac{\sqrt{(y_1z_2-z_1y_2)^2+y_2^2+z_2^2}}{(1+y_1^2+z_1^2)^{3/2}},\quad
\tau=\frac{y_2z_3-z_2y_3}{(y_1z_2-z_1y_2)^2+y_2^2+z_2^2} \text{ and }\ 
D_s =\frac1{\sqrt{1+y_1^2+z_1^2}}D_x.
 $$
The field of rational differential invariants are generated by
 $$
\kappa^2,\quad \tau,\quad \nabla=\kappa\kappa_s D_s.
 $$

\subsubsection{Euclidean Invariant Euler--Lagrange via local invariants}

Recall that the classical Frenet--Serret moving frame for a nondegenerate curve is 
constructed from considering the unit tangent vector $T$ and normal $N$ and adding to 
these the cross-product $B = T \times N$. We shall view this moving frame as an 
equivariant map $\rho_{\text{FS}}: J^\infty \rightarrow \text{SO}(3)$ given by composition of $\pi^{\infty}_2$ with
 \begin{gather}\label{3D_def_Frenet_Serret_frame}
(x,y,z,y_1,z_1,y_2,z_2) \mapsto \begin{pmatrix}
\frac{1}{\ell} & - \frac{y_1y_2+z_1z_2}{\ell\zeta}  & \frac{y_1z_2-y_2z_1}{\zeta} \\
\frac{y_1}{\ell} & \frac{-y_1z_1z_2+y_2z_1^2+y_2}{\ell\zeta} & -\frac{z_2}{\zeta}\\
\frac{z_1}{\ell} & \frac{y_1^2z_2-y_1y_2z_1+z_2}{\ell\zeta} & \frac{y_2}{\zeta}\end{pmatrix}\\
\text{ where }\ell=\sqrt{1+y_1^2+z_1^2}\text{ and }\zeta=\sqrt{(y_1z_2-z_1y_2)^2+y_2^2+z_2^2}=\ell^3\kappa.
\label{ellzeta}
 \end{gather}

Recall that a moving frame assigns to a point $u \in J^{\infty}$ the group 
element in $\text{SO}(3)$ which transforms $u$ to the unique point lying in 
the intersection of the orbit of $u$ and the cross-section. 
To be more precise, we should actually consider 
the orbits of $\text{SE}(3)$ and not only those of $\text{SO}(3)$, but we may use the translations to set $x = 0,y = 0, z = 0$. The remaining equations for cross-section $\Sigma$ can be recovered by solving for the points $u \in J^2$ such that $\rho_{\text{FS}}(u) = \mathbf{1}_3$. The cross-section $\Sigma_{FS}$ for the Frenet--Serret moving frame is given by
\begin{equation}
    \Sigma_{\text{FS}} = \{x = 0, y = 0, z = 0, y_1 = 0, z_1 = 0, z_2 = 0 \}.
\end{equation}

Invariantizing the horizontal one-form $dx$ gives the invariant horizontal form $\varpi$:
\begin{equation}\label{3D_def_varpi}
    \varpi = \iota(dx) = \frac{dx + y_1 dy + z_1 dz}{\sqrt{1 + y_1^2 + z_1^2}}. 
\end{equation}
Similarly, we obtain invariant contact forms $\iota(\theta^i_j)$ by invariantizing the 
standard contact one-forms.

We use the algorithm to compute the invariant EL equations. By application of the 
invariant horizontal- and vertical differentials 
(\ref{eqn_inv_der_on_functions}) to the coordinate functions $x, y, z, y_1, z_1, z_2$ we 
get a linear system on the $\gamma$ and $\epsilon$ terms. The solution is given by
\begin{equation*}\label{3D_eqn_gammas}
    \begin{split}
        \gamma^1 &  = - \varpi,\  
        \gamma^2  = 0,\ 
        \gamma^3  = 0,\ 
        \gamma^4  = - \kappa \varpi,\  
        \gamma^5  = 0,\ 
        \gamma^6  = -\tau\varpi, \\
        \epsilon^1 & = 0,\
        \epsilon^2  = -\iota(\theta_0^y), \
        \epsilon^3  = - \iota(\theta_0^z), \
        \epsilon^4 = - \iota(\theta_1^y),\  
        \epsilon^5  = - \iota(\theta_1^z),\ 
        \epsilon^6  = -\frac{\iota(\theta_2^z)}{\kappa}.
    \end{split}
\end{equation*}

We compute the invariant vertical differential of the 
invariants $\kappa, \tau$ and the invariant horizontal form $
\varpi$, and then rewrite these expressions in terms of 
differential operators acting on the lowest order invariant 
contact forms. 

\medskip

Using that $\iota(y_2) = \kappa$, we compute
\begin{equation}\label{3D_inv_ver_der_kappa}
    \begin{split}
        d_{\mathcal{V}}\kappa &  =  \iota(\theta_2^y)  = \left(D_s^2 + (\kappa^2 - \tau^2) \right) \iota(\theta_0^y) - \left(2 \tau D_s + \tau_s \right) \iota(\theta_0^z).
    \end{split}
\end{equation}
Similarly, by using that $\iota\left(\frac{z_3}{y_2}\right) = \tau$ and equation (\ref{eqn_inv_der_on_functions}), we obtain that
\begin{equation}\label{3D_inv_ver_der_tau}
    \begin{split}
        d_{\mathcal{V}}\tau & =  \left(\frac{2 \tau}{\kappa} D_s^2 + \left(\frac{3 \tau_1}{\kappa} - \frac{2 \tau \kappa_s}{\kappa^2}\right) D_s + \left(\frac{\tau_{ss}}{\kappa} + 2 \tau \kappa - \frac{\tau_s \kappa_s}{\kappa^2}\right) \right) \iota(\theta_0^y) \\ & + \left(\frac{1}{\kappa} D_s^3 - \frac{\kappa_s}{\kappa^2} D_s^2 + \left(\kappa - \frac{\tau^2}{\kappa} \right) D_s + \left(\frac{\kappa_s \tau^2}{\kappa^2} - \frac{2 \tau \tau_s}{\kappa}\right) \right) \iota(\theta_0^z).
    \end{split}
\end{equation}

We have $d_{\mathcal{V}}\varpi = -\kappa \, \iota(\theta_0^y) \wedge \varpi$ from which we obtain that $\mathcal{B} = \begin{pmatrix} -\kappa & 0 \end{pmatrix}$.
By using the definition of the formal adjoint, we obtain that
\begin{equation}
    \mathcal{A}^{*} = \begin{pmatrix} (\mathcal{A}^{\kappa}_y)^{*} & (\mathcal{A}^{\tau}_y)^{*} \\
    (\mathcal{A}^{\kappa}_z)^{*} & (\mathcal{A}^{\tau}_z)^{*} \end{pmatrix} = 
\begin{pmatrix}
    D_s^2 + (\kappa^2 - \tau^2) &  \frac{2 \tau}{\kappa} D_s^2 + \left( \frac{\tau_s}{\kappa} - \frac{2 \tau \kappa_s}{\kappa^2} \right) D_s + 2 \tau \kappa   \\
    2 \tau D_s + \tau_s  & -\frac{1}{\kappa} D_s^3 + \frac{2 \kappa_s}{\kappa^2} D_s^2 + \left(\frac{\kappa_{ss}}{\kappa^2} - \frac{2 \kappa_s^2}{\kappa^3} - \kappa + \frac{\tau^2}{\kappa}   \right) D_s - \kappa_s.       
\end{pmatrix}
\end{equation}
Note that the computation of the formal adjoint $\mathcal{A}^{*}$ 
involves in particular the transposition of the matrix $
\mathcal{A}$. Clearly, we have that $\mathcal{B}$ is (formally) 
self-adjoint, that is, $\mathcal{B}^{*} = \mathcal{B}^T$.

\medskip

In component form, the invariant Euler--Lagrange equations 
are given by 
\begin{equation}\label{3D_inv_EulerLagrange}
    \begin{cases}
      \left(D_s^2 + (\kappa^2 - \tau^2) \right) \mathcal{E}_{\kappa}(L) + \left(\frac{2 \tau}{\kappa} D_s^2 + \left( \frac{\tau_s}{\kappa} - \frac{2 \tau \kappa_s}{\kappa^2} \right) D_s + 2 \tau \kappa \right) \mathcal{E}_{\tau}(L) + \kappa \cdot \mathcal{H}(L) = 0\\
      \left( 2 \tau D_s + \tau_s \right) \mathcal{E}_{\kappa}(L) + \left( -\frac{1}{\kappa} D_s^3 + \frac{2 \kappa_s}{\kappa^2} D_s^2 + \left(\frac{\kappa_{ss}}{\kappa^2} - \frac{2 \kappa_s^2}{\kappa^3} - \kappa + \frac{\tau^2}{\kappa}   \right) D_s - \kappa_s \right) \mathcal{E}_{\tau}(L)  = 0
    \end{cases}       
\end{equation}

The matrix relative invariant $W$ is readily computed, either by using 
the explicit moving frame $\rho_{\text{FS}}$ or by applying formulas 
(\ref{eq:varthetatilde}), (\ref{eq:vartheta0}), and using notations \eqref{ellzeta} 
we get the formula
\begin{equation}\label{eq:W_Eucl3D}
    W = \frac1\zeta \begin{pmatrix}
(1+z_1^2)y_2-y_1z_1z_2 & (1+y_1^2)z_2-y_1z_1y_2 \\
-z_2\ell &  y_2\ell \end{pmatrix}.
\end{equation}
In particular, we have $\det(W) = \ell$. Consequently, singular extremals 
must have null length. Such extremals do not exist for Euclidean signature 
(but they do appear for Lorentzian signature). 

Some examples of invariant Euler--Lagrange equations are displayed in the following table.

\medskip

\begin{tabular}{||c c c c||} 
 \hline
 Lagrangian & Eulerian & Hamiltonian & Invariant Euler--Lagrange Equations \\ [0.5ex] 
 \hline\hline
 $ds$ & $ (0,0)^T $ & $-1$ & $ \kappa = 0 $ \\ 
 \hline
 $\kappa\, ds$ & $(1,0)^T$ & $ -\kappa $ & $ \tau = 0  $\\
 \hline
 $\tau\, ds$ & $(0,1)^T  $ & $ -\tau $ & $\begin{cases}
      \tau\, \kappa = 0  \\
     \kappa_s = 0
    \end{cases} $ \\
 \hline
  $\kappa^2 \, ds$ & $(2\kappa,0)^T  $ & $ -\kappa^2 $ & $ \begin{cases}
     \kappa_{ss} = \kappa \tau^2 - \frac12 \kappa^3 \\
     \kappa \tau_s + 2 \kappa_s \tau = 0
    \end{cases} $ \\
\hline
\end{tabular}
\medskip

Notice that we only get 1 equation in the first two examples. For them the invariant Euler--Lagrange operator is given by $(-\kappa,0)$ and $(-L_0^2,L_1)$, respectively. 

\subsubsection*{Geodesics}

The variational problem $\int ds$ gives the invariant Euler--Lagrange equation 
$\kappa = 0$. Solution curves of this equation are straight lines in 
$\mathbb{R}^3$, that is, geodesics for the Euclidean metric. (The singular extremals for the Lorentzian case are given by null geodesics.)

\subsubsection*{Unparametrized conformal geodesics}

Consider the invariant Lagrangian $L=\tau$ corresponding to the variational problem 
\begin{equation}\label{torsion_functional}
    \int \tau \ ds = \int \tau\ \sqrt{1 + y_1^2 + z_1^2}\ dx.
\end{equation}
Computing the invariant Eulerian and invariant Hamiltonian yields 
that $\mathcal{E}_{\kappa}(L)=0$, $\mathcal{E}_{\tau}(L)=1$ and 
$\mathcal{H}(L)=-\tau$, respectively. The invariant Euler--Lagrange equations are 
\begin{equation}
    \tau \kappa = 0 \qquad \kappa_s = 0.
\end{equation}
Note that $\tau$ is not well-defined when $\kappa = 0$. The 
invariant Euler--Lagrange equations imply that both $\tau = 0$ and $\kappa_s = 
0$. The torsion of a curve vanishes if and only if it is planar, and then 
the condition $\kappa_s = 0$ implies that the curve will be a 
circle. Thus, the extremals of the variational problem \eqref{torsion_functional}
consist of circles, that is, conformal geodesics.

\subsubsection*{Elastica}

Elastica are the solution curves of the bending energy 
(or squared geodesic curvature) functional $\int \kappa^2 ds$ . 
Its invariant Euler--Lagrange equations are given by 
 \begin{equation}
\kappa_{ss} = \kappa \tau^2 - \frac12 \kappa^3, \qquad 
\kappa \tau_s + 2 \kappa_s \tau = 0.
 \end{equation}
In \cite{Singer} the invariant Euler--Lagrange equations for the bending energy functional are computed on a Riemannian manifold of constant curvature $G$ to be 
 \begin{equation}
\kappa_{ss} = \kappa \tau^2 - \frac12 \kappa^3 - G \kappa, \qquad 
\kappa \tau_s + 2 \kappa_s \tau = 0,
 \end{equation}
which are consistent with the result for the flat case $(G = 0)$. This is in contrast with the case of (unparametrized) conformal geodesics, where the invariant Euler--Lagrange equations are given by $\kappa_s = 0$ and $\kappa \tau = 0$ on any conformal 3-manifold \cite{marugame2024fefferman}, \cite{kruglikov2024variationality}. The explanation comes from the fact that the latter variational problem is conformally invariant, while the former is not.

\subsubsection{Euclidean-invariant Euler--Lagrange via global invariants}

We use the rational invariants $K:=\kappa^2, T:=\tau$, the rational invariant derivation $\nabla = \kappa \kappa_s \frac{d}{ds}$ and the same cross-section as before. For the invariant basis of 1-forms, we take 
\begin{equation}\label{eq:Eucl3D_global_frame}
    \begin{split}
        \varpi &= \frac{1}{\kappa \kappa_s\sqrt{1+y_1^2+z_1^2}}(dx+y_1 dy+z_1 dz),\\
    \vartheta_0^y &= \frac{1}{(1+y_1^2+z_1^2)^2} \left(((1+z_1^2) y_2-y_1 z_1 z_2) \theta_0^y + ((1+y_1^2)z_2-y_1 z_1 y_2) \theta_0^z \right), \\
    \vartheta_0^z &= \frac{\kappa_s}{\sqrt{(y_1z_2-z_1y_2)^2+y_2^2+z_2^2}} \left( y_2 \theta_0^z-z_2 \theta_0^y\right),
    \end{split}
\end{equation}

all of which have rational coefficients. Now the elements of 
\[ \mathcal{A}^* = \left( \begin{matrix} (\mathcal{A}_y^K)^* & (\mathcal{A}_y^T)^*\\(\mathcal{A}_z^K)^*&(\mathcal{A}_z^T)^* \end{matrix} \right), \qquad \mathcal{B} ^* =\left( \begin{matrix} (\mathcal{B}_y)^* \\ (\mathcal{B}_z)^* \end{matrix} \right)\] are 
\begin{align*}
        (\mathcal{A}_y^K)^* &= \frac{4}{K_1} \nabla^2 -\left(\frac{6 K_2}{K_1^2}-\frac{4}{K_0}\right) \nabla -\left(\frac{2K_3}{K_1^2}-\frac{4K_2^2}{K_1^3}+\frac{K_2}{K_0 K_1} + \frac{K_1}{K_0^2} +2T_0^2-2K_0\right), \\
    (\mathcal{A}_y^T)^* &= \frac{4T_0}{K_0 K_1} \nabla^2 - \left(\frac{6 T_0 K_2}{K_0 K_1^2}-\frac{2T_1}{K_0 K_1} + \frac{2 T_0}{K_0^2} \right) \nabla +\left(-\frac{2 T_0 K_3}{K_0 K_1^2}+\frac{4 T_0 K_2^2 }{K_0 K_1^3}-\frac{T_1K_2 }{K_0 K_1^2}+\frac{T_0 K_2 }{K_0^2 K_1}+2 T_0\right), \\ 
    (\mathcal{A}_z^K)^* &= \frac{8 K_0 T_0}{K_1} \nabla -\left( \frac{4K_0 T_0 K_2}{K_1^2}-\frac{4 K_0 T_1}{K_1}+4 T_0  \right), \\ 
    (\mathcal{A}_z^T)^* &= - \frac{4}{K_1^2} \nabla^3 + \left( \frac{12 K_2}{K_1^3}+\frac{4}{K_0 K_1} \right) \nabla^2 + \left( \frac{8 K_3}{K_1^3}-\frac{19  K_2^2}{K_1^4}-\frac{5  K_2}{K_0  K_1^2}+\frac{2  T_0^2}{K_1}-\frac{2  K_0}{K_1}-\frac{3}{K_0^2} \right) \\ &+ 
    \left(\frac{2 K_4}{K_1^3}-\frac{13 K_2 K_3}{K_1^4}-\frac{2 K_3}{K_0 K_1^2}+\frac{14 K_2^3}{K_1^5}+ \frac{7 K_2^2}{2K_0 K_1^3}+ \frac{3K_2}{2K_0^2 K_1}+\frac{(K_0-T_0^2 )K_2 }{K_1^2}-1 \right), \\
    (\mathcal{B}_y)^* &= \frac{4}{K_1^2} \nabla^3 + \left(-\frac{18 K_2}{K_1^3}+\frac{4}{K_0 K_1} \right) \nabla^2 +\left(-\frac{14 K_3}{K_1^3}+\frac{40 K_2^2}{K_1^4}-\frac{9 K_2}{K_0 K_1^2}-\frac{2 T_0^2}{K_1}+\frac{2 K_0}{K_1}-\frac{1}{K_0^2}\right) \nabla \\ &+ \left(-\frac{4 K_4}{K_1^3}+\frac{32 K_2 K_3}{K_1^4}-\frac{4 K_3}{K_0 K_1^2}-\frac{40 K_2^3}{K_1^5}+\frac{9 K_2^2}{K_0 K_1^3}+\frac{K_2}{K_0^2 K_1}+\frac{2 (T_0^2 -K_0)K_2 }{K_1^2}-2\right),\\
    (\mathcal{B}_z)^* &= \frac{8 K_0 T_0}{K_1^2}\nabla^2+\left( -\frac{20 K_0 T_0 K_2 }{K_1^3}+\frac{4 K_0 T_1}{K_1^2}+\frac{4 T_0}{K_1}\right) \nabla \\&+\left(-\frac{4 K_0 T_1 K_2 }{K_1^3}-\frac{8 K_0 T_0 K_3}{K_1^3}+\frac{20 K_0 T_0 K_2^2 }{K_1^4}-\frac{4 T_0 K_2 }{K_1^2}\right) .
\end{align*}
For example, if the Lagrangian is $\varpi$, the corresponding invariant Euler--Lagrange equations for regular extremals are given by  
\begin{align*}
    K_4 &= -\tfrac{-2 K_0^2 K_1^3 K_2 T_0^2+2 K_0^3 K_1^3 K_2+2 K_0^2 K_1^5-32 K_0^2 K_1 K_2 K_3+40 K_0^2 K_2^3+4 K_0 K_1^3 K_3-9 K_0 K_1^2 K_2^2-K_1^4 K_2}{4 K_0^2 K_1^2}, \\ T_1 &= -T_0 \tfrac{2 K_0 K_1 K_3-5 K_0 K_2^2+K_1^2 K_2}{K_0 K_1 K_2}.
\end{align*}

The matrix relative 
invariant $W$ determined by the global invariant forms (\ref{eq:Eucl3D_global_frame}) is 
proportional to (\ref{eq:W_Eucl3D}) with proportionality factor $\text{diag}(\kappa_s^{-1}, \kappa^{-1})$.

\subsection{Conformally Invariant Variational Problems in 3D}\label{subsConf}

The group $G=\text{SO}(1,4)$ acts on $\mathbb{S}^3$ by conformal transformations;
in fact it is the entire conformal automorphism group. 
The corresponding Lie algebra $\mathfrak{g}=\mathfrak{so}(1,4)$ 
acts algebraically in the chart $\R^3(x,y,z)$.
It is generated by three translations $\p_x$, $\p_y$, $\p_z$, three rotations 
$x\p_y-y\p_x$, $x\p_z-z\p_x $, $y\p_z-z\p_y$, , the homothety $x\p_x+y\p_y+z\p_z$, 
and three inversions $\tfrac12(x^2+y^2+z^2)\p_x-x(x\p_x+y\p_y+z\p_z)$, etc, 
enumerated as $X_1,\dots,X_{10}$.

\subsubsection{Generating differential invariants}

Local differential invariants of curves in conformal spaces go back to H.~Liebmann and T.~Takasu, we
refer to \cite{CRW,M} and references therein. In the space of constant curvature in 3D (often taken to be the sphere,
we will refer to the flat space) the generators are the following two differential invariants and invariant derivation,
expressed in terms Euclidean curvature $\kappa$, torsion $\tau$ and the natural parameter $ds$
(with $\kappa_s=\tfrac{d}{ds}\kappa$, etc):
 \begin{equation}\label{def_conf3D_inv_der}
\begin{split}
     Q & =\frac{4\nu\nu_{ss}-4\kappa^2\nu^2-5\nu_s^2}{8\nu^3},\qquad 
T=\frac{2\kappa_s^2\tau+\kappa^2\tau^3+\kappa\kappa_s\tau_s-\kappa\kappa_{ss}\tau}{\nu^{5/2}},\\
\nu & =\sqrt{\kappa^2\tau^2+\kappa_s^2},\qquad \omega=\sqrt{\nu}ds,\quad D_\omega=\nu^{-1/2}\frac{d}{ds}.
\end{split}
 \end{equation}

Thus the first absolute invariant comes in order 4 and is the conformal torsion $T$. In order 5 two more invariants add:
conformal curvature $Q$ and the derivative of torsion $T_\omega=D_\omega T$. Order 6 yields 
two more invariants $Q_\omega$, $T_{\omega\omega}$, etc. The relative invariant $\nu$ is 
called the \textit{conformal arclength}. Note a similarity of $Q$ and $q$, as well as $
\omega$ and $\sigma$.

\medskip

Let us now obtain global (rational) differential invariants.
Reference \cite{CRW} contains the following normal form of the curve in $\R^3$, 
where the first coordinate $x$ is used as a parameter:
 $$
y=\frac{x^3}{3!}+(2Q-T^2)\frac{x^5}{5!}+O(x^6),\quad z=T\frac{x^4}{4!}+T_\omega\frac{x^5}{5!}+O(x^6).
 $$
Note that for unparametrized curve several choices were made here: orientation and co-orientations 
(including the orientation of the ambient space), which are not invariant under the action of the conformal group
$O(4,1)$. This is a rather a pseudo-group (or groupoid) action on $\R^3$, while the group acts on $\mathbb{S}^3$.

The above coordinate choice is respected by the transformation
$(x,y,z)\mapsto\Bigl(\sigma x,\sigma^{-1}y,\epsilon\sigma z\Bigr)$, where $\epsilon^2=1$, $\sigma^4=1$. 
Taking into account this discrete freedom the local invariants change as follows:
 $$
T\mapsto\epsilon\sigma T,\qquad Q\mapsto\sigma^2Q,\qquad D_\omega\mapsto\sigma^{-1} D_\omega,
 $$
Note also that the Euclidean curvature and the natural parameter change $\kappa\mapsto\sigma^2\kappa$,
$\tfrac{d}{ds}\mapsto\sigma^2\tfrac{d}{ds}$, while $\tau$ is invariant. 
Thus the expressions containing even number of $\kappa$ and $s$-derivatives are invariant, and so the 
global rational conformal invariants have the following generators:
 \begin{equation}
T^4,\qquad QT^2,\qquad \nabla=TT_\omega D_\omega. 
 \end{equation}
The first invariant is $T^4$ of order 4, then come $Q^2$, $QT^2$ and $T_\omega^2$ of order 5, but we note that 
$Q^2=(QT^2)^2/T^4$ is derived. In order 6 we get $T^{-1}T_\omega Q_\omega$ and $TT_{\omega\omega}$, etc. 
The lowest order global invariant form is $T^{-1}T_\omega^{-1}\omega$.

 \begin{remark}
While we indicate real Lie group, our residual freedom is considered
with respect to complex variables, and so is applicable to any
signature of the conformal structure. Even though the restriction may be 
simplified to $\sigma^2=1$ over $\R$, we keep the above constraint to 
get rational invariants, cf.\ \cite[\S5.3]{kruglikov2016global}.
 \end{remark}

\subsubsection{Conformally-invariant Euler--Lagrange via local invariants}

Using the action of $G=\text{Conf}(3)$ to fix as many group parameters 
as possible, we obtain the following cross-section
 \begin{equation}
\Sigma := \{x=0,\ y=0,\ z=0,\ y_1=0,\ z_1=0,\ y_2=0,\ z_2=0,\ y_3=1,\ z_3=0,\ y_4=0 \}.
 \end{equation}
Restricting the conformal torsion $T$ and conformal curvature $Q$ to the 
cross-section gives $T|_\Sigma = z_4, Q|_\Sigma = \frac12 (z_4^2 + y_5)$. 
Thus, we have that $T = \iota(z_4)$ and $Q = \frac12 \iota(y_5 + z_4^2)$. 
Moreover, the horizontal form $\omega$ (dual to $D_{\omega}$) equals $dx$ on the 
cross-section. Therefore, we define $\varpi := \iota(dx)$, which ensures that 
the horizontal part of $\varpi$ is equal to $\omega$. We compute the invariant 
Euler--Lagrange equations by application of the algorithm.

We get the following expressions for the $\gamma^i$ and $\epsilon^i$ components:
\begin{equation}
    \begin{split}
        \gamma^1 & = - \varpi,\ \gamma^2 = 0, \gamma^3 = 0,\ \gamma^4 = 0,\ \gamma^5 = 0,\ \gamma^6 =  -T\, \varpi, 
        \gamma^7  = 0,\ \gamma^8 = - Q\, \varpi, \gamma^9 = - \varpi,\ \gamma^{10} = 0, \\
        \epsilon^1 & = 0,\ \epsilon^2 = -\iota(\theta^y_0), \epsilon^3 = -\iota(\theta^z_0),\ \epsilon^4 = -\iota(\theta^y_1),\ \epsilon^5 = -\iota(\theta^z_1),\ \epsilon^6 = -\iota(\theta^z_3), \\
        \epsilon^7 & = \frac{\iota(\theta^y_3)}{2},\ \epsilon^8 = -\frac12\ \iota(\theta^y_4) - \frac{T \iota(\theta_3^z)}{2} , \epsilon^9 = -\iota(\theta^y_2),\ \epsilon^{10} = -\iota(\theta^z_2).
    \end{split}
\end{equation}

By computing the invariant vertical differentials $d_{\mathcal{V}}T,d_{\mathcal{V}}Q$ 
and $d_{\mathcal{V}}\varpi$ and rewriting these as the actions of invariant 
differential operators on the zeroth-order invariant contact forms, we get the 
expressions for $\mathcal{A}$ and $\mathcal{B}$. We readily compute their formal 
adjoints as
\begin{equation}\label{ABconf3D}
    \begin{split}
        (\mathcal{A}_y^T)^{*} & = -\frac{5 T}{2} D_{\omega}^3 - \frac{3 T_{1}}{2} D_{\omega}^2 +\left(\frac{1}{2} T_{2} - \frac{1}{2} T^{3} + T Q \right) D_{\omega}+\left( \frac{1}{2} T_{3} -3 T^{2} T_{1} - \frac{1}{2} T Q_{1} - T_{1} Q  \right)    \\
        (\mathcal{A}_z^T)^{*} & = D_{\omega}^{4}+\left(-\frac{3 T^{2}}{2} - 2 Q\right) D_{\omega}^{2}+ \left(-Q_{1}+\frac{3 T T_{1}}{2}\right) D_{\omega} +\left(2 T T_{2}+\frac{3}{2} T_{1}^{2} -1 - \frac{1}{2} T^{4} -  Q T^{2}\right) \\
        (\mathcal{A}_y^Q)^{*} & = -\frac{1}{2} D_{\omega}^5 + \left(\frac{3 T^{2}}{2}+2 Q\right) D_{\omega}^3 +\left(\frac{3 T T_{1}}{2}+\frac{7 Q_{1}}{2}\right) D_{\omega}^2+\left(-3 Q T^{2}-2 Q^{2}+3 Q_{2}-2\right) D_{\omega}  \\ &  +\left(Q_3 -3 Q T T_{1}-3 Q_{1} T^{2}-3 Q Q_{1}\right) \\
        (\mathcal{A}_z^Q)^{*} & = -\frac{3 T}{2} D_{\omega}^4 -\frac{3 T_{1} }{2} D_{\omega}^3+\left(4 T Q - \frac{1}{2} T_{2}+\frac{1}{2} T^{3}\right) D_{\omega}^2+\left(3 T_{1} Q + 6 T Q_{1}\right) D_{\omega} \\ & +\left(-Q T^{3}-2 Q^{2} T + T_{2} Q + 3 T_{1} Q_{1} + 3 T Q_{2} - T\right) \\
        (\mathcal{B}_y)^{*} & =  -\frac1{2} D_{\omega}^3 +\left(\frac{3 T^{2}}{2}+Q\right) D_{\omega} + \left(\frac{Q_{1}}{2}+\frac{3 T T_{1}}{2}\right)  \\
        (\mathcal{B}_z)^{*} & = -\frac{3 T}{2} D_{\omega}^2 -\frac{3 T_{1}}{2} D_{\omega} + \left(-\frac{1}{2} T_{2}+ T Q+\frac{1}{2} T^{3}\right)
    \end{split}
\end{equation}

We compute some examples of conformally invariant Euler--Lagrange equations
based on \eqref{ABconf3D}. The results are displayed in the following table.

\medskip

\begin{tabular}{||c c c c||} 
 \hline
 Lagrangian & Eulerian & Hamiltonian & Invariant Euler--Lagrange Equations \\ [0.5ex] 
 \hline\hline
 $\omega$ & $ (0,0)^T $ & $-1$ & $  \begin{cases}
     Q_1 + 3 T T_1 = 0\\
     T_2 =  T^3 + 2 T Q
    \end{cases}  $ \\ 
 \hline
  $T\ \omega$ & $(1,0)^T  $ & $ -T $ & $ \begin{cases}
     0 = 0\\
     0 = -1
    \end{cases} $ (\textbf{NB}: This implies no \textit{regular} extremals.)\\
 \hline
 $Q\ \omega$ & $(0,1)^T$ & $ -Q $ & $  \begin{cases}
    Q_3 = 3 T Q T_1+ 3 Q T^2 + 3 Q Q_1 
    \\
     Q_2 = \frac{1}{3T} \left(T^3 Q + (2 Q^2 +2)T - T_2 Q - 3 T_1 Q_1   \right)
    \end{cases}  $\\
    \hline
     $T^2\ \omega$ & $ (2 T, 0)^T $ & $ - T^2  $ & $ \begin{cases}
     T_4 = \frac14 T^5 + \frac12 T^3 Q+ \frac54 T^2 T_2+ 2 Q T_2 + T_1 Q_1+ T
      \\
     T_3 = \frac{1}{10 T} \left(T_1\, (-10 T_2 + 4 T Q - 5 T^3) - T^2 Q_1 \right) 
    \end{cases} $ \\
 \hline
 $Q^2\ \omega$ & $(0, 2Q)^T$ & $  -Q^2  $ & $ \begin{cases}
     Q_5 = R(T, T_1, Q_1,Q_2,Q_3) \\
     Q_4 = \frac{S(T, T_1, T_2, Q,Q_1,Q_2,Q_3)}{6 T}
    \end{cases}  $  ($R,S$ polynomials)    \\  
 \hline
\end{tabular}

\medskip

Here the polynomials $R,S$ are given by the following formulas:
\begin{equation}
    \begin{split}
        R & =  -\frac{9}{2} Q^{2} T T_{1}-9 Q Q_{1} T^{2}+3 Q_{2} T T_{1}+3 Q_{3} T^{2}-\frac{15}{2} Q^{2} Q_{1}+5 Q_{0} Q_{3}+10 Q_{2} Q_{1}-4 Q_{1} \\
        S & = -3 Q^{2} T^{3}-6 Q^{3} T+2 Q_{2} T^{3}+3 Q^{2} T_{2}+18 T_{1} Q Q_{1}+22 Q Q_{2} T+18 Q_{1}^{2} T-4 T Q-2 T_{2} Q_{2}-6 T_{1} Q_{3}
    \end{split}
\end{equation}

\subsubsection{Computing relative invariant $W$ using a general invariant coframe}

In order to relate the invariant Euler--Lagrange 
equations to the usual Euler--Lagrange equations, it is essential to compute the matrix 
relative invariant $W$. However, for the above computations, we did not have to make an 
appeal to explicit expressions for the invariant contact one-forms. 
Computing the explicit moving frame is computationally difficult. In 
order to circumvent this challenge, we use the invariant contact forms of order 0 
constructed in \eqref{eq:vartheta0}, which we can relate to the invariantizations of the 
usual contact forms. This will allow us to compute $W$ and in particular determine the Euler--Lagrange equations for the Lagrangian $T\ \omega$.

\medskip

By application of formulas (\ref{eq:varthetatilde}), (\ref{eq:vartheta0}), 
we construct from $Q, T$ two invariant 
contact forms $ \vartheta_0^1, \vartheta_0^2$ of order~0:
\begin{equation}
    \begin{split}
        \vartheta^1_0 := 2 \left( \frac{dQ(\partial_{y_5})}{(D_{\omega}(x))^5} \cdot \theta_0^y +  \frac{dQ(\partial_{z_5})}{(D_{\omega}(x))^5} \cdot \theta_0^z    \right)   \hspace{1cm}
        \vartheta^2_0  := \left(  \frac{dT(\partial_{y_4})}{(D_{\omega}(x))^4} \cdot \theta_0^y +  \frac{dT(\partial_{z_4})}{(D_{\omega}(x))^4} \cdot \theta_0^z  \right).
    \end{split}
\end{equation}

We want to relate these one-forms to the invariantizations $
\iota(\theta_0^y)$ and $\iota(\theta_0^z)$. To this end, we evaluate $\vartheta_0^1, 
\vartheta_0^2$ to the cross-section $\Sigma$. By definition of the 
invariantization map, we have that $\iota(\theta_0^y)|_\Sigma = \theta_0^y$ 
and $\iota(\theta_0^z)|_\Sigma = \theta_0^z$. We obtain coordinate 
expressions of the form
\begin{equation*}
    (\vartheta^i_0)|_\Sigma = a^i_y \, \theta_0^y + a^i_z \, \theta_0^z \qquad (i = 1,2). 
\end{equation*}
In this example the coefficients happen to be
\begin{equation*}
    a^1_y = 1,\quad a^1_z = 0, \qquad a^2_y = 0,\quad a^2_z = 1,
\end{equation*}
and therefore we have that $\vartheta^1_0 = \iota(\theta^y_0)$ and $
\vartheta^2_0 = \iota(\theta^z_0)$. Thus, we 
have found explicit coordinate expressions for the invariantized contact 
forms of order 0 
without using the explicit moving frame. Since the horizontal part of $
\varpi$ is known (\ref{def_conf3D_inv_der}), we can then go on to compute 
$W$ using its definition (\ref{eq:W}). We obtain the following expression (note the transposition):

\begin{equation}\label{eq:WConformal3D}
\hspace{-5pt}W = \scalebox{0.95}{$
    \begin{pmatrix}
\frac{(z_1^2+1)y_3-y_1z_1z_3}{\ell^4}
-3\frac{(y_1y_2+z_1z_2)((z_1^2+1)y_2-y_1z_1z_2)}{\ell^6} & 
-\frac{z_3}{\ell^3}-3\frac{(y_1y_2+z_1z_2)z_2}{\ell^5} \\
\frac{(y_1^2+1)z_3-y_1z_1y_3}{\ell^4}
-3\frac{(y_1y_2+z_1z_2)((y_1^2+1)z_2-y_1z_1y_2)}{\ell^6} & 
\frac{y_3}{\ell^3}-3\frac{(y_1y_2+z_1z_2)y_2}{\ell^5}
\end{pmatrix}^T $}\!\!.\hspace{-5pt}
\end{equation} 

It is readily verified that $W$ is a matrix relative invariant. 
(\textbf{NB}: This does not imply that its entries are scalar relative 
invariants.) 

\subsubsection{Singular Extremals: Conformal Geodesics}\label{singular_extremals_conf_geod}

In \cite{kogan2003invariant} the authors raise the question if there are 
examples of invariant variational problems all of whose extremals are 
singular. We show that the conformal geodesics on $\mathbb{R}^3$ provide 
such an example. It is known that the conformal geodesics are variational 
with the Lagrangian $\tau\ ds$, which is invariant with respect to the isometry 
group of $\mathbb{R}^3$. In \cite{magliaro2011geometry} it is shown that 
\begin{equation}
    \int \tau\ ds = \int T\ \omega\hspace{1cm} (\text{mod}\ 2 \pi \mathbb{Z}),
\end{equation}
where the integrals are taken over paths with fixed ends,
and so the conformal geodesics are also extremals of the conformal torsion 
functional. The conformal geodesic equations $\kappa_s =0$ and $\kappa\tau=0$ are 
equivalent to the vanishing of the conformal relative invariant $\nu = \sqrt{\kappa_s^2 + 
\kappa^2 \tau^2}$. Since $\nu$ appears in the denominator of the conformal 
torsion $T$, we conclude that the conformal geodesics are singular 
extremals.

\medskip

We can still compute the Euler--Lagrange equations using Theorem 1. We 
obtain that the invariant Euler--Lagrange system is given by $\text{E}_{\mathrm{inv}}(T) = (0,-1)^{T}$. Setting this equal to zero gives of course no solutions, which means that there are no regular extremals.
The matrix relative invariant $W$ is readily computed (\ref{eq:WConformal3D}) and satisfies $\det(W) = \sqrt{1 + y_1^2 + z_1^2}\ \nu^2$. Solving the Euler--Lagrange 
equation $\mathrm{E}(L) = W \cdot (0, -1)^{T} = 0$ for third-derivatives yields
\begin{equation}
     y_3 = \frac{3 y_2(y_1 y_2 + z_1 z_2)}{1 + y_1^2 + z_1^2}  \qquad z_3 = \frac{3z_2(y_1 y_2 + z_1 z_2)}{1 + y_1^2 + z_1^2}.
\end{equation}
These are readily recognized to be the unparametrized conformal geodesic equations. 
Alternatively, one can note that $\det(W) = 0$ by vanishing of the second column of $W$. 
This implies that $\nu = 0$ and so we obtain the two equations $\kappa_s = 0$ and $\kappa 
\tau = 0$ which describe (unparametrized) conformal geodesics. 
\medskip

 \begin{remark}
This example also shows that it may be convenient to restrict to a subgroup 
$H \subseteq G$ when computing Euler--Lagrange equations for some $G$-invariant variational problem. In this case, when restricting to $\text{Iso}(\mathbb{R}^3) \subseteq \text{Conf}(\mathbb{R}^3)$ we still have two 
generating differential invariants, but these are of lower order. Moreover, 
the set of singular points becomes smaller in this case. Actually, the conformal geodesics are regular extremals for the isometry-invariant variational problem $\int \tau\ ds$.
 \end{remark}

\subsubsection{Conformal Arclength}

If we consider the Lagrangian $L = \omega$, where $\omega = \nu\, dx$ is the 
conformal arclength, we obtain the two equations $Q_1 + 3 T T_1 = 0$ and $T_2 =T^3 + 2 T 
Q $. These equations are both of sixth order and coincide with the expressions in 
\cite{M}, \cite{magliaro2011geometry}.

\textbf{NB}: 
Extremals of the conformal arclength functional are sometimes also called conformal geodesics in the literature cf. \cite{M}, \cite{magliaro2011geometry}. 
These are however two conflicting nomenclatures.

\subsubsection{An example of globally invariant variational problem}

Consider the global invariant Lagrangian $\lambda=T^{-1}T^{-1}_{1}\omega$ 
dual to $\nabla = TT_{1} D_{\omega}$. We base on Remark \ref{Rk26} 
and subsection \ref{S27} to compute the extremals. 
Thus for $L=T^{-1}T_{1}^{-1}$ the  Eulerian is given 
by $\mathcal{E}(L) = -\frac{2}{T^2T_1} -\frac{2T_2}{TT_1^3}$, 
and the Hamiltonian by $\mathcal{H}(L) = -\frac{2}{TT_1}$. 
From this we obtain the invariant Euler--Lagrange equations via local differential
invariants, but these are readily expressed in terms of global invariants. 

Let $P:= T^4$, $R:= QT^2$ be 
a basis of global differential invariants; let us use subscripts to indicate  
the number of derivatives with respect to $\nabla$. 
We obtain two equations that can be expressed in terms of global invariants
and resolved with respect to $P_5$ and $R_2$:
 \begin{align*}
& 160P^4P_1^3P_5 + 8P^4P_1^4P_3 - 20P^4P_1^3P_2^2 + 28P^3P_1^5P_2 
  + 8P^3P_1^4P_2R_1 - 16P^3P_1^4P_3R + 40P^3P_1^3P_2^2R \\
& \ \ - 12P^2P_1^7 - 8P^2P_1^6R_1 - 16P^2P_1^5P_2R - 1840P^4P_1^2P_2P_4 
  - 1280P^4P_1^2P_3^2 + 10560P^4P_1P_2^2P_3 - 8800P^4P_2^4 \\
& \ \ + 656P^3P_1^4P_4 - 4868P^3P_1^3P_2P_3 + 5748P^3P_1^2P_2^3 + 306P^2P_1^5P_3 
  - 654P^2P_1^4P_2^2 - 20PP_1^6P_2 + 5P_1^8 =0,\\[7pt]
& 160P^3P_1^3(PP_2 - P_1^2)R_2 + 100P^5P_1^4P_2 - 100P^4P_1^6 + 200P^4P_1^4P_2R 
  - 200P^3P_1^6R + 1360P^5P_1^3P_4 \\
& \ \ - 10880P^5P_1^2P_2P_3 + 13600P^5P_1P_2^3 + 200P^4P_1^4P_2 + 3224P^4P_1^4P_3 
  - 7380P^4P_1^3P_2^2 + 640P^4P_1^3P_3R_1 \\
& \ \ + 1280P^4P_1^3P_4R - 1680P^4P_1^2P_2^2R_1 - 10240P^4P_1^2P_2P_3R 
  + 12800P^4P_1P_2^3R - 100P^3P_1^6 \\
& \ \ + 389P^3P_1^5P_2 + 944P^3P_1^4P_2R_1 + 3072P^3P_1^4P_3R 
  - 6840P^3P_1^3P_2^2R - 41P^2P_1^7 + 56P^2P_1^6R_1 \\
& \ \ + 112P^2P_1^5P_2R - 240P^4P_1^2P_2P_4 - 400P^4P_1^2P_3^2 
  + 2600P^4P_1P_2^2P_3 - 2200P^4P_2^4 + 48P^3P_1^4P_4 \\
& \ \ + 116P^3P_1^3P_2P_3 - 786P^3P_1^2P_2^3 - 792P^2P_1^5P_3 
  + 1853P^2P_1^4P_2^2 + 15PP_1^6P_2 - 235P_1^8 = 0.
 \end{align*}

\subsection{Euclidean Invariant Variational Problems in $4D$}

The manifold is $\R^4(x,y,z,u)$. We consider the Lie group action of $\text{SE}(4)$ 
on $\R^4$. For the Lie algebra $\mathfrak{se}(4)=\mathfrak{so}(4)\ltimes\R^4$ 
we have generators $\p_x$, $\p_y$, $\p_z$, $\p_u$, $x\p_y-y\p_x$, $x\p_z-z\p_x$,
$x\p_u-u\p_x$, $y\p_z-z\p_y$, $y\p_u-u\p_y$, $z\p_u-u\p_z$, which we denote by $X_1,\dots,X_{10}$, respectively. In the case of Minkowski spacetime
the algebra is $\mathfrak{se}(3,1)=\mathfrak{so}(3,1)\ltimes\R^4$ and the 
generators $X_7,X_9,X_{10}$ should be changed to hyperbolic rotations;
this modifies the formulas below by some sign changes, but otherwise keeps them 
true (including examples). The neutral signature works similarly. 

\subsubsection{Generating differential invariants}
Denote by $\mathsf{W}(f,g,h)$ the Wronskian for three jet functions $f,g,h$. 
Then the algebra of local differential invariants is generated by
 \begin{gather*}
\kappa=\frac{\sqrt{(y_1z_2-z_1y_2)^2+(y_1u_2-u_1y_2)^2+(z_1u_2-u_1z_2)^2+y_2^2+z_2^2+u_2^2}}{(1+y_1^2+z_1^2+u_1^2)^{3/2}},\\
\tau=\frac{\left(\mathsf{W}(y_1,z_1,u_1)^2+(y_2z_3-z_2y_3)^2+(y_2u_3-u_2y_3)^2+(z_2u_3-u_2z_3)^2\right)^{1/2}}{(y_1z_2-z_1y_2)^2+(y_1u_2-u_1y_2)^2+(z_1u_2-u_1z_2)^2+y_2^2+z_2^2+u_2^2}\\
\mu= \frac{1}{\kappa^3 \tau^2} \frac{\mathsf{W}(y_2, z_2, u_2)}{(1+y_1^2+z_1^2+u_1^2)^5},\qquad\qquad
\frac{d}{ds}=\frac1{\sqrt{1+y_1^2+z_1^2+u_1^2}}\frac{d}{dx}.
 \end{gather*}

Hence generators of the algebra of global differential invariants are
 $$
\kappa^2,\quad \tau^2,\quad \kappa^3\tau^2\mu,\quad \nabla=\kappa\kappa_s\frac{d}{ds}.
 $$

\subsubsection{Computing invariant Euler--Lagrange equations using cross-section}
We apply the algorithm as described in section (\ref{algorithm}). A cross-section is 
given by the vanishing of $x, y_0, y_1, z_1, z_2, u_1, u_2,u_3$.
By applying the invariant horizontal differential to these functions which vanish on the 
cross-section, one readily obtains that the nonvanishing $\gamma^i$ are given by
\begin{equation}\label{4D_eqn_gammas}
    \gamma^1 = - \varpi,\ \gamma^5 = - \kappa \varpi,\ \gamma^8 = -\tau \varpi,\ \gamma^{10} = -\mu \varpi.
\end{equation}
The $\epsilon^i$ components are given by
\begin{equation}
    \begin{split}
        \epsilon^1 & = 0,\ \epsilon^2 = -\iota(\theta_0^y),\ \epsilon^3 = -\iota(\theta_0^z),\ \epsilon^4 = -\iota(\theta_0^u),\ \epsilon^5 = -\iota(\theta_1^y),\ \epsilon^6 = -\iota(\theta_1^z),     \\ 
        \epsilon^7 & = -\iota(\theta_1^u),\, \epsilon^8 = -\frac{\iota(\theta_2^z)}{\kappa},\ \epsilon^9 = -\frac{\iota(\theta_2^u)}{\kappa},\ \epsilon^{10} = -\frac{\iota(\theta_3^u)}{\kappa \tau} + \frac{\kappa_1 \iota(\theta_2^u)}{\kappa^2 \tau}.
    \end{split}
\end{equation}
Evaluating $\kappa, \tau, \mu$ and the contact-invariant horizontal form $ds$ on the 
cross-section yields $\kappa|_\Sigma = y_2,\ \tau|_\Sigma = \frac{z_3}{y_2}$, $\mu|_\Sigma = \frac{u_4}{z_3}$ and $ds|_\Sigma = dx$, respectively. Thus, we have that $\kappa = \iota(y_2),\ \tau = \iota\left(\frac{z_3}{y_2}\right), \mu = \iota\left(\frac{u_4}{z_3}\right)$. We define $
\varpi := \iota(dx)$ so that the horizontal part of $\varpi$ equals $ds$.

The formal adjoints of the operators $\mathcal{A}$ coming from $d_{\mathcal{V}}\kappa,d_{\mathcal{V}}\tau, d_{\mathcal{V}}\mu $ are readily computed:
\begin{equation}
    \begin{split}
        (\mathcal{A}_y^{\kappa})^{*} & = D_s^2 + (\kappa^2 - \tau^2)  \\
        (\mathcal{A}_z^{\kappa})^{*} & = 2 \tau D_s + \tau_1    \\
        (\mathcal{A}_u^{\kappa})^{*} & = \tau \mu
    \end{split}
\end{equation}
Next, we get the following expressions
\begin{equation}
    \begin{split}
        (\mathcal{A}_y^{\tau})^{*} & = \left(2\,{\frac {\tau}{\kappa}} \right) D_s^2 + \left(-2\,{\frac {\tau\,\kappa_1}{{\kappa}^{2}}}+{\frac {\tau_1}{\kappa}} \right) D_s + \left(2\,\tau\,\kappa-{\frac {\tau\,{\mu}^{2}}{\kappa}} \right) \\
        (\mathcal{A}_z^{\tau})^{*} & = \left(\frac{-1}{\kappa} \right) D_s^3 + \left(2\,{\frac {\kappa_1}{{\kappa}^{2}}} \right) D_s^2 + \left(-\kappa+{\frac {{\tau}^{2}}{\kappa}}+3\,{\frac {{\mu}^{2}}{\kappa}}+{
\frac {\kappa_2}{{\kappa}^{2}}}-2\,{\frac {{\kappa_1}^{2}}{{\kappa}^{3}}
}
 \right) D_s + \left(-\kappa_1-2\,{\frac {\kappa_1\,{\mu}^{2}}{{\kappa}^{2}}}+3\,{\frac {\mu
\,\mu_1}{\kappa}}
 \right)  \\
        (\mathcal{A}_u^{\tau})^{*} & = \left(-3\,{\frac {\mu}{\kappa}} \right) D_s^2 + \left(4\,{\frac {\kappa_1\,\mu}{{\kappa}^{2}}}-3\,{\frac {\mu_1}{\kappa}} \right) D_s + \left(2\,{\frac {\kappa_1\,\mu_1}{{\kappa}^{2}}}+{\frac {\mu\,\kappa_2}{{\kappa
}^{2}}}+{\frac {{\mu}^{3}}{\kappa}}-{\frac {\mu_2}{\kappa}}-\kappa\,\mu
-2\,{\frac {\mu\,{\kappa_1}^{2}}{{\kappa}^{3}}}
 \right)
    \end{split}
\end{equation}
Finally, the remaining formal adjoints are given by the following formulas
\begin{equation}
    \begin{split}
        (\mathcal{A}_y^{\mu})^{*} & = \frac{3 \mu}{\kappa} D_s^2 +\left(-\frac{2 \kappa_1 \mu}{\kappa^{2}}-\frac{2 \mu \tau_1}{\kappa \tau}+\frac{2 \mu_1}{\kappa}\right) D_s +\left(\frac{\mu \tau^{2}}{\kappa}+\mu \kappa\right)  \\
        (\mathcal{A}_z^{\mu})^{*} & = -\frac{3 \mu D_s^3}{\kappa \tau}+\left(-\frac{3 \mu_1}{\kappa \tau}+\frac{4 \kappa_1 \mu}{\kappa^{2} \tau}+\frac{6 \tau_1 \mu}{\kappa \tau^{2}}\right) D_s^2+\left(-\frac{\mu \kappa}{\tau}+\frac{\mu^{3}}{\kappa \tau}-\frac{\mu \tau}{\kappa}-\frac{\mu_2}{\kappa \tau}-\frac{4 \kappa_1 \tau_1 \mu}{\kappa^{2} \tau^{2}} \right. \\ & \left. +\frac{3 \tau_1 \mu_1}{\kappa \tau^{2}}+\frac{2 \kappa_1 \mu_1}{\kappa^{2} \tau}+\frac{\kappa_2 \mu}{\kappa^{2} \tau}-\frac{2 \kappa_1^{2} \mu}{\kappa^{3} \tau}-\frac{6 \tau_1^{2} \mu}{\kappa \tau^{3}}+\frac{3 \mu \tau_2}{\kappa \tau^{2}}\right) D_s+\left(-\frac{2 \mu \tau_1}{\kappa}-\frac{\mu_1 \tau}{\kappa}+\frac{2 \kappa_1 \tau \mu}{\kappa^{2}}\right) \\
        (\mathcal{A}_u^{\mu})^{*} & = \frac{D_s^4}{\kappa \tau}+\left(-\frac{3 \tau_1}{\kappa \tau^{2}}-\frac{2 \kappa_1}{\kappa^{2} \tau}\right) D_s^3 +\left(-\frac{3 \mu^{2}}{\kappa \tau}-\frac{\kappa_2}{\kappa^{2} \tau}+\frac{2 \kappa_1^{2}}{\kappa^{3} \tau}+\frac{6 \tau_1^{2}}{\kappa \tau^{3}} -\frac{3 \tau_2}{\kappa \tau^{2}}+\frac{\tau}{\kappa}+\frac{4 \kappa_1 \tau_1}{\kappa^{2} \tau^{2}}+\frac{\kappa}{\tau}\right) D_s^2 \\ & +\left(-\frac{2 \kappa_1 \tau}{\kappa^{2}}-\frac{\kappa \tau_1}{\tau^{2}}-\frac{6 \tau_1^{3}}{\kappa \tau^{4}}-\frac{\tau_3}{\kappa \tau^{2}}+\frac{2 \tau_1}{\kappa}-\frac{3 \mu \mu_1}{\kappa \tau}+\frac{2 \kappa_1 \mu^{2}}{\kappa^{2} \tau}+\frac{3 \tau_1 \mu^{2}}{\kappa \tau^{2}}-\frac{2 \kappa_1^{2} \tau_1}{\kappa^{3} \tau^{2}}+\frac{\kappa_2 \tau_1}{\kappa^{2} \tau^{2}}-\frac{4 \kappa_1 \tau_1^{2}}{\kappa^{2} \tau^{3}} \right. \\ & \left. +\frac{2 \kappa_1 \tau_2}{\kappa^{2} \tau^{2}}+\frac{6 \tau_1 \tau_2}{\kappa \tau^{3}}+\frac{\kappa_1}{\tau}\right) D_s +\left(-\frac{\mu^{2} \tau}{\kappa}+\frac{2 \kappa_1^{2} \tau}{\kappa^{3}}-\frac{\tau \kappa_2}{\kappa^{2}}-\frac{2 \kappa_1 \tau_1}{\kappa^{2}}+\frac{\tau_2}{\kappa}\right) 
    \end{split}
\end{equation}

Finally, by computing $d_{\mathcal{V}}\varpi$, we get that 
\begin{equation}
    \mathcal{B}_y = - \kappa, \mathcal{B}_z = 0, \mathcal{B}_u = 0,
\end{equation}
and consequently we get $\mathcal{B}^{*} = \mathcal{B}^T$.

The invariant Euler--Lagrange equations are given by the vector equation 
$\mathcal{A}^{*}  \mathcal{E}(L) - \mathcal{B}^{*} \mathcal{H}(L) = 0$,
where $\mathcal{E}(L), \mathcal{H}(L)$ are the invariant Eulerian and invariant Hamiltonian (\ref{def_inv_Eulerian_Hamiltonian}). The matrix relative invariant $W$ is readily computed from its definition and satisfies $\det{W} = \ell^2$. Thus, singular extremals do not exist for Euclidean signature, whereas for Lorentzian signature singular extremals are necessarily null.

Some $\mathrm{SE}(4)$-invariant Euler--Lagrange equations are displayed in the following table.

\medskip

\begin{tabular}{||c c c c||} 
 \hline
 Lagrangian & Eulerian & Hamiltonian & Invariant Euler--Lagrange Equations \\ [0.5ex] 
 \hline\hline
 $ds$ & $ (0,0,0)^T $ & $-1$ & $ \kappa = 0 $ \\ 
 \hline
 $\kappa\, ds$ & $(1,0,0)^T$ & $ -\kappa $ & $ \tau = 0  $\\
 \hline
 $\tau\, ds$ & $(0,1,0)^T  $ & $ -\tau $ & $ \mu^2 = \kappa^2 $ \\
 \hline
 $\mu\, ds$ & $(0,0,1)^T  $ & $ -\mu $ & $ \begin{cases}
     \mu = 0\\
     \tau_2 = \frac{\left(2 \kappa \kappa_1 \tau_1 + \kappa \kappa_2 \tau - 2 \kappa_1^2 \tau \right)}{\kappa^2} 
    \end{cases}  $ \\
 \hline
  $\kappa^2 \, ds$ & $(2\kappa,0,0)^T  $ & $ -\kappa^2 $ & $ \begin{cases}
     \kappa_2 = \kappa \tau^2 - \frac12 \kappa^3 \\
     \kappa \tau_1 + 2 \kappa_1 \tau = 0 \\
     \mu = 0
    \end{cases} $ \\
 \hline
 $(1 + \kappa)\ ds$ & $(1,0,0)^T$ & $ -(1 + \kappa) $ & 
 $\begin{cases} \kappa = - \tau^2\\ \tau_1=0\\ \mu = 0 \end{cases}$ \\  
 \hline
\end{tabular}

\bigskip

Let us comment on some entries. 

For $L=1$ the result follows from $E_{\text{inv}}=(-\kappa,0,0)$. 
For $L=\kappa$ we get $E_{\text{inv}}=(-\tau^2,\tau_1,\tau\mu)$,
so the Euler--Lagrange equations are equivalent to one equation $\tau = 0$. 
For $L=1+\kappa$ we get superposition
$E_{\text{inv}}(1+\kappa)=E_{\text{inv}}(1) + E_{\text{inv}}(\kappa) =
(-\kappa-\tau^2,\tau_1,\tau\mu)$, whence the result.
(We remark that formally one also gets the branch $\kappa=0,\tau=0$ but whenever
$\kappa=0$, $\tau$ is not defined, so this branch is excluded.)
Note that the equations do not show superposition, though the corresponding
invariant Euler--Lagrange operators do.

For the invariant variational problem $\int \tau\, ds$ the Euler--Lagrange 
equations are equivalent to $\mu=\pm\kappa$ or $\tau=0$. 
(The equations corresponding to $z, u$ consist of the differential consequences 
$\mu_1=\pm\kappa_1$ and $\mu_2=\pm\kappa_2$. Since $\mu$ is not defined when 
$\tau = 0$, some care must be taken when interpreting the Euler--Lagrange equations.) 

\medskip

In $4D$ the equations for the elastica, that is, solution curves of the invariant variational problem $\int \kappa^2\, ds$ are essentially the same as in $3D$. Indeed, the equation $\mu = 0$ implies that the solution curves lie on a hyperplane in $\mathbb{R}^4$ on which we obtain again the two equations $\kappa_2 = \kappa \tau^2 - \frac12 \kappa^3$ and $\kappa \tau_1 + 2 \kappa_1 \tau = 0$.

\subsubsection{An example of globally invariant variational problem}

Consider the global invariant Lagrangian $\lambda=\kappa^{-1}\kappa_1^{-1}\,ds$ 
dual to $\nabla = \kappa\kappa_1D_s$. We base on Remark \ref{Rk26} 
and subsection \ref{S27} to compute the extremals. 
Thus for $L=\kappa^{-1}\kappa_1^{-1}$ the Eulerian is given by 
$\mathcal{E}(L) = -\frac{2}{\kappa^2\kappa_1} -\frac{2\kappa_2}{\kappa\kappa_1^3}$, 
and the Hamiltonian by 
$\mathcal{H}(L) = -\frac{2}{\kappa\kappa_1}$. 
From this we obtain the invariant Euler--Lagrange equations via local differential
invariants, but these are readily expressed in terms of global invariants. 

Let $P:= \kappa^2$, $R:= \tau^2$ and $S:= \kappa^3\tau^2\mu$ be 
a basis of global differential invariants; let us use subscripts to indicate 
the number of derivatives with respect to $\nabla$. 
We obtain three equations that can be solved for $\kappa_4$, $\tau_1$, $\mu$. 
By expressing local invariants in terms of global invariants, we obtain the following equations:
\begin{equation}
    \begin{cases} 
P_4 = 
\frac{P_1^3P_2R+P_1^5+16P_1P_2P_3-20P_2^3}{2P_1^2}
+\frac{9PP_2^2 -2P^3P_1P_2 -4PP_1P_3 +P_1^2P_2}{4P^2}, \\
R_1 = -\frac{2R(2PP_1P_3-5PP_2^2+P_1^2P_2)}{PP_1P_2}, \\
S = 0.\end{cases}
\end{equation}

\section{Outlook: theory vs symbolic challenges}

In this work, we address the problem of computing the Euler--Lagrange equations
for invariant Lagrangians using invariant calculus. This problem has previously 
been approached by many experts. Our approach is complementary in that it uses 
global invariants as well as develop practical techniques for computing 
the relative factor $W$. The latter allowed for the computation of 
the Euler--Lagrange equations for the conformal torsion functional, 
whose extremals (the conformal geodesics) are all singular.

A naive approach to compute the Euler--Lagrange equation in invariant terms
is to calculate it in jet-coordinates and then express the result through differential invariants. This approach has high computational complexity. 
For instance, one can effectively solve the general invariant problem 
for two-dimensional Euclidean motion group, 
but computations in three and especially in four dimensions are too difficult.

One can rely on the classical method of moving frames, using the explicit 
Lie group action, and \cite{kogan2003invariant} effectively addressed 
this problem for the three-dimensional 
Euclidean motion group. Yet in a similar problem in four dimensions, we encountered 
computational difficulties, meaning the symbolic system (we used the Differential Geometry package in Maple) does not terminate in a reasonable time. 
Even in dimension two, for the projective group the prolongation
of the action fails to terminate for 8-jets and higher.

In the latter case, the prolongation of the projective action of the 
corresponding Lie algebra can be computed directly and takes almost no time. 
The approach of replacing the Lie group action and moving frame with the 
Lie algebra action, together with a local cross-section, becomes more effective.
This was already noticed in the work of Kogan--Olver and their collaborators, 
where the recursive formulas for the invariant differentials do not depend 
on the Lie group action.

Let us note that globally invariant variational problems, in the case that $G$ 
acts locally freely and allows only a local transversal, can still be treated 
by the methods of moving frames as elaborated by Kogan--Olver. 
For this one may feed to their invariantization map the restrictions of global rational invariants to this cross-section, the resulting Euler--Lagrange
equation is globally invariant.

We take it further and apply it outside the moving frame technique. 
In fact, with our approach we do not rely on the way the generators of 
the algebra of differential invariants are computed. 

In our work we discussed 
how our method allows to overcome computational difficulties that arise 
in symbolic manipulations with Euler--Lagrange equations of high 
order via the ideas of symmetry. In particular, this allowed us to complete 
the computation of examples presented in this paper.

\end{document}